\documentclass[11pt,a4paper]{article}
\usepackage{amssymb,amsmath}
\parskip 2pt

\setlength{\topmargin}{-0.50cm}
\setlength{\oddsidemargin}{1.05cm}
\textwidth=140mm
\textheight=230mm

\usepackage{times,theorem,latexsym,color}

\newcommand{\BOX}{\ensuremath\Box}

\newtheorem{theorem}{Theorem }[section]

{\theorembodyfont{\rmfamily}}
{\theorembodyfont{\rmfamily}}
{\theorembodyfont{\rmfamily}}
\newtheorem{lemma}[theorem]{Lemma}

{\theorembodyfont{\rmfamily}\newtheorem{remark}[theorem]{Remark}}
{\theorembodyfont{\rmfamily}}

\newcommand{\R}{\mathbb{R}}

\newcommand{\dd}{\,{\rm d}}

\newenvironment{proof}{{\vskip\baselineskip\noindent\textbf{Proof:}}}%
{\hspace*{.1pt}\hspace*{\fill}\BOX\vskip\baselineskip}

\newenvironment{proofx}[1]%
{\vskip\baselineskip\noindent\textbf{Proof of {#1}:}}%
{\hspace*{.1pt}\hspace*{\fill}\BOX\vskip\baselineskip}
{\vskip\baselineskip\noindent\textbf{Proof of Theorem \protect\ref{#1}:}}%
{\hspace*{.1pt}\hspace*{\fill}\BOX\vskip\baselineskip}
{\vskip\baselineskip\noindent\textbf{Proof of Theorems \protect\ref{#1} --
\protect\ref{#2}:}}%
{\hspace*{.1pt}\hspace*{\fill}\BOX\vskip\baselineskip}

\begin{document}

\title{On stationary Navier-Stokes flows around a rotating obstacle in two-dimensions}
\author{Mitsuo Higaki\thanks{Department of Mathematics, Kyoto University, Japan. E-mail: \texttt{mhigaki@math.kyoto-u.ac.jp}} \and Yasunori Maekawa\thanks{Department of Mathematics, Kyoto University, Japan. E-mail: \texttt{maekawa@math.kyoto-u.ac.jp}} \and Yuu Nakahara\thanks{Mathematical Institute, Tohoku University, Japan. E-mail: \texttt{yuu.nakahara.t3@dc.tohoku.ac.jp}}}

\date{}

\maketitle

\noindent {\bf Abstract} 
We study the two-dimensional stationary Navier-Stokes equations 
describing the flows around a rotating obstacle. 
The unique existence of solutions and their asymptotic behavior at spatial infinity are established
when the rotation speed of the obstacle and the given exterior force are sufficiently small. 

\vspace{0.3cm}

\noindent {\bf Keywords}\, Navier-Stokes equations $\cdot$ two-dimensional exterior flows $\cdot$ scale-criticality $\cdot$ flows around a rotating obstacle

\vspace{0.3cm}

\noindent {\bf Mathematics Subject Classification (2000)}\, 35B35 $\cdot$ 35Q30 $\cdot$ 76D05


\section{Introduction}\label{intro}

In this paper we consider the two-dimensional Navier-Stokes equations for viscous incompressible flows
around a rotating obstacle in two-dimensions:
\begin{equation}\label{NS}
  \left\{
\begin{aligned}
 \partial_t v -\Delta v + v\cdot \nabla v + \nabla q & \,=\, g \,,  
 \quad {\rm div}\, v \,=\, 0\,,   ~~~~~~ t>0\,,~y \in \Omega (t)\,, \\
  v & \,=\, \alpha y^\bot \,, ~~~~~~ t>0\,,~ y \in \partial \Omega (t)\,, \\
  v & \, \rightarrow \,     0   \,,  ~~~~~~ t>0\,,~ |y| \rightarrow \infty\,.
\end{aligned}\right.
\end{equation}
Here $v=v(y,t) = (v_1(y,t), v_2 (y,t))^\top$ and $q=q(y,t)$ are respectively unknown velocity field and pressure field, and $g(y,t) = (g_1(y,t), g_{2}(y,t))^\top$ is a given external force. 
The time dependent domain $\Omega(t)$ is defined as 
\begin{align}
\begin{split}
\Omega(t) & \, = \, \big \{ y\in \R^2~|~ y = O (\alpha t) x\,, x\in \Omega \big \}\,,\\
O (\alpha t ) & \, = \,
\begin{pmatrix}
\cos \alpha t & -\sin \alpha t\\
\sin \alpha t & \cos \alpha t
\end{pmatrix}\,,
\end{split}
\end{align}
where $\Omega$ is an exterior domain in $\R^2$ with a smooth compact boundary, while the real number $\alpha \in \R\setminus\{0\}$ represents the rotation speed of the obstacle $\Omega^c=\R^2\setminus\Omega$. 
We use the standard notation for derivatives: $\partial_t = \frac{\partial}{\partial t}$, $\partial_j = \frac{\partial}{\partial x_j}$, $\Delta = \sum_{j=1}^2 \partial^2_j$, ${\rm div}\, v = \sum_{j=1}^2 \partial_j v_j$, $v\cdot \nabla v = \sum_{j=1}^2 v_j \partial_j v$. The vector $x^{\perp}$ denotes the perpendicular: $x^\bot =(-x_2,x_1)^\top$. 
The system \eqref{NS} describes the flow around the obstacle $\Omega^c$ which rotates with a constant angular velocity $\alpha$, and the condition $v (t,y)=\alpha y^\bot$ on the boundary  $\partial \Omega (t)$  represents the no-slip boundary condition. To remove the difficulty due to the time dependence of the fluid domain 
it is more convenient to analyze the system \eqref{NS} in the reference frame: for $t\geq 0$ and $x\in \Omega$,
\begin{align*}
y = O (\alpha t) x\,, \quad u (x,t) = O (\alpha t)^\top v (y,t)\,, \quad p (x,t) = q (y,t)\,, \quad f (x,t) = O (\alpha t)^\top g (y,t)\,.
\end{align*}
Here $M^\top$ denotes the transpose of a matrix $M$. Then \eqref{NS} is equivalent with the equations in the time-independent domain $\Omega$:
\begin{equation*}
  \left\{
\begin{aligned}
 \partial_t u -\Delta u - \alpha ( x^\bot \cdot \nabla u - u^\bot ) + \nabla p & \,=\,  - u\cdot\nabla u  + f \,,  
 \quad {\rm div}\, u \,=\, 0\,, ~~~~~~ t>0\,,~x \in \Omega\,, \\
  u  & \,=\, \alpha x^\bot \,, ~~~~~~ t>0\,,~ x \in \partial \Omega\,, \\
  u  & \, \rightarrow \,     0   \,,  ~~~~~~ t>0\,,~ |x| \rightarrow \infty\,.
\end{aligned}\right.
\end{equation*}
In this paper we are interested in the stationary solutions to this system. Thus we assume that $f$ is independent of $t$ and consider the next system
\begin{equation}\tag{NS$_{\alpha}$}\label{NS_alpha.intro}
  \left\{\begin{aligned}
  -\Delta u - \alpha ( x^\bot \cdot \nabla u - u^\bot ) + \nabla p  &\,=\, - u\cdot\nabla u  + f \,, 
  \quad {\rm div}\, u \,=\, 0\,,  ~~~~~~   x \in \Omega\,, \\
 u  & \, =\,  \alpha x^\bot   \,,  ~~~~~~ x\in \partial\Omega\,, \\
 u  & \, \rightarrow \,     0   \,,  ~~~~~~ |x| \rightarrow \infty\,.
\end{aligned}\right.
\end{equation}
To state our result let us introduce the function spaces used in this paper. 
As usual, the class $C_{0,\sigma}^\infty (\Omega)$ is defined as the set of smooth divergence free vector fields with compact support in $\Omega$, and the homogeneous space $\dot{W}^{1,2}_{0,\sigma} (\Omega)$ is the closure of $C_{0,\sigma}^\infty (\Omega)$ with respect to the norm $\| \nabla f\|_{L^2 (\Omega)}$.
For a fixed number $s\geq 0$ we also introduce the weighted $L^\infty$ space $L^\infty_s (\Omega)$ 
and its subspace $L^\infty_{s,0}(\Omega)$ as follows.
\begin{align}\label{def.L^infty_s}
\begin{split}
L^\infty_s (\Omega) & \, = \, \big \{ f\in L^\infty (\Omega)~|~ (1+|x|)^s f \in L^\infty (\Omega) \big \}\,,\\
L^\infty_{s,0} (\Omega) & \, = \, \big \{ f\in L^\infty_s (\Omega) ~|~ \lim_{R\rightarrow \infty} {\rm ess.sup}_{|x|\geq R} |x|^s |f(x)| \, = \, 0\big \}\,.
\end{split}
\end{align}
These are Banach spaces equipped with the natural norm 
$$\|f\|_{L^\infty_s (\Omega)} = {\rm ess.sup}_{x\in\Omega} (1+|x|)^s |f(x)|\,,$$
and the set of functions with compact support is dense in $L^\infty_{s,0}(\Omega)$. Moreover, for any bounded sequence $\{f_n\}$ in $L^\infty_s (\Omega)$ (or $L^\infty_{s,0}(\Omega)$) with $\| f_n \|_{L^\infty_s (\Omega)} \le M$ for some $M>0$, there exists a subsequence $\{f_{n'}\}$ which converges in the weak-star topology in the sense that there is $f\in L^\infty_s (\Omega)$ (or $f\in L^\infty_{s,0}(\Omega)$, respectively) such that 
$$\lim_{n'\rightarrow \infty} \int_\Omega f_{n'} (x)  \phi (x)  (1+|x|)^s \dd x \, = \,  \int_\Omega f (x) \phi (x) (1+|x|)^s \dd x\,, \qquad {\rm for ~any~}~ \phi \in L^1 (\Omega)$$ and $\| f \|_{L^\infty_s (\Omega)} \le M$.
We denote by $L^2_{loc}(\overline{\Omega})$ the set of functions which belong to 
$L^2 (\Omega \cap K)$ for any compact set $K\subset \R^2$, and $W^{k,2}_{loc} (\overline{\Omega})$, $k=1,2,\cdots$, is defined in the similar manner.

The main result of this paper is stated as follows.
\begin{theorem}\label{thm.main} There exists $\epsilon=\epsilon (\Omega)>0$ such that the following statement holds. Assume that $f\in L^2 (\Omega)^2$ is of the form $f={\rm div}\, F=(\partial_1 F_{11} + \partial_2 F_{12}, \partial_1 F_{21} + \partial_2 F_{22})^\top$ with some $F=(F_{ij})_{1\leq i,j\leq 2}\in L^\infty_2 (\Omega)^{2\times 2}$ and $F_{12}-F_{21}\in L^1 (\Omega)$. If $\alpha \ne 0$ and 
\begin{align}\label{est.thm.main.1}
|\alpha|^\frac12\big  |\log |\alpha | \big | \, + \, |\alpha|^{-\frac12} \big  |\log |\alpha| \big | \, \big ( \| f \|_{L^2 (\Omega)} + \| F \|_{L^\infty_2 (\Omega)} + \| F_{12}-F_{21} \|_{L^1 (\Omega)} \big ) <\epsilon\,,
\end{align}
then there exists a solution $(u,\nabla p)\in \big (W^{2,2}_{loc}(\overline{\Omega}) \cap L^\infty_1 (\Omega) \big )^2 \times L^2_{loc} (\overline{\Omega})^2$ to \eqref{NS_alpha.intro}, which is unique in a suitable class of functions (see Theorem \ref{thm.nonlinear} for the precise description). If $F\in L^\infty_{2,0} (\Omega)^{2\times 2}$ in addition, then the solution $u$ behaves as 
\begin{align}\label{est.thm.main.2}
u(x) & \, = \, \beta \frac{x^\bot}{4\pi |x|^2} \, + \, o (|x|^{-1})\,, \qquad |x|\rightarrow \infty\,,
\end{align}
where
\begin{align}\label{est.thm.main.3}
\beta & \, = \, \int_{\partial\Omega} y^\bot \cdot \big ( T (u,p) \nu \big ) \dd \sigma_y \, + \, \lim_{\delta\rightarrow 0} \int_\Omega e^{-\delta |y|^2} y^\bot \cdot f \dd y\,. 
\end{align}
Here $T(u,p) = \nabla u + (\nabla u)^\top - p \, \mathbb{I}$\,, $\mathbb{I}=(\delta_{ij})_{1\leq i,j\leq 2}$\,, denotes the Cauchy stress tensor, and $\nu$ is the outward unit normal vector to $\partial\Omega$.
\end{theorem}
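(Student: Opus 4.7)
The plan is to reduce the boundary value problem to a fixed-point problem with zero boundary condition in the scale-critical space $L^\infty_1(\Omega)$, relying on sharp linear estimates for the rotating Stokes operator established (presumably) in an earlier section, and then to extract the asymptotic profile from an integral representation.

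First I would lift the boundary data by constructing a solenoidal cut-off $b\in C^\infty_0(\R^2)^2$ with $b=x^\bot$ on $\partial\Omega$ and $b$ supported in a small neighborhood of $\partial\Omega$ (a Bogovskii-type corrector applied to a rotation field times a cutoff). Setting $u=\alpha b+w$ converts \eqref{NS_alpha.intro} into a system for $w$ with homogeneous Dirichlet condition whose right-hand side is $\mathrm{div}\,\tilde F$ plus the nonlinear term $-\mathrm{div}(w\otimes w)-\alpha\,\mathrm{div}(b\otimes w+w\otimes b)-\alpha^2\,\mathrm{div}(b\otimes b)+\alpha\,\text{(linear terms in $b$)}$. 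All of these new forcings are of divergence form with tensor in $L^\infty_2$, and their $L^\infty_2$-norms together with the $L^1$ antisymmetric parts are controlled by $|\alpha|+\|f\|_{L^2}+\|F\|_{L^\infty_2}+\|F_{12}-F_{21}\|_{L^1}$.

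Second I would invoke the linear theorem from the preceding section (referenced via Theorem~\ref{thm.nonlinear}), which asserts: for the rotating Stokes problem $-\Delta w-\alpha(x^\bot\cdot\nabla w-w^\bot)+\nabla p=\mathrm{div}\,G$ with $w|_{\partial\Omega}=0$, a solution $w\in \dot W^{1,2}_{0,\sigma}\cap L^\infty_1$ exists with
\[
\|w\|_{L^\infty_1(\Omega)}\;\lesssim\;|\alpha|^{-\tfrac12}\bigl|\log|\alpha|\bigr|\,\bigl(\|G\|_{L^\infty_2}+\|G_{12}-G_{21}\|_{L^1}+\|\mathrm{div}\,G\|_{L^2}\bigr).
\]
This is the scale-critical estimate where the $|\alpha|^{-1/2}|\log|\alpha||$ loss is the price for circumventing the Stokes paradox via the rotation. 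With this in hand, I would run a contraction map $\Phi$ sending $w$ to the solution of the linearized equation with nonlinearity $-\mathrm{div}(w\otimes w)$. Since $\|w\otimes w\|_{L^\infty_2}\leq\|w\|_{L^\infty_1}^2$ and $w\otimes w$ is symmetric (so the antisymmetric part vanishes), a standard Banach fixed-point argument in a ball of radius $\rho\sim|\alpha|^{1/2}|\log|\alpha||$ closes precisely under \eqref{est.thm.main.1}. Uniqueness in the appropriate class follows by the same contractive estimate.

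Third, for the asymptotic behavior \eqref{est.thm.main.2}, I would use the representation of $u$ at infinity via the fundamental solution $\Gamma_\alpha$ of the rotating Stokes operator in $\R^2$. The leading profile of $\Gamma_\alpha$ acting on a divergence-form source is the $2$D harmonic vortex $\frac{x^\bot}{4\pi|x|^2}$ with coefficient equal to the total antisymmetric moment of the source plus the boundary torque. Concretely, I would cut off to $\Omega\cap\{|x|<R\}$, test the equation against the explicit solution $x^\bot$ of the homogeneous adjoint equation, and integrate by parts, producing exactly the surface integral $\int_{\partial\Omega}y^\bot\cdot T(u,p)\nu\,\dd\sigma_y$ together with the volume moment $\int e^{-\delta|y|^2}y^\bot\cdot f\,\dd y$ (the Gaussian cut-off handles the conditional convergence at infinity). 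The lower-order term $o(|x|^{-1})$ is obtained by showing that, for $F\in L^\infty_{2,0}$ and $u\in L^\infty_1$, the contributions of the nonlinear term, the boundary data, and the tail of $F$ to the representation integral decay faster than $|x|^{-1}$, using the $L^\infty_{2,0}$-decay and the weak-$*$ compactness discussed after \eqref{def.L^infty_s}.

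The main obstacle is the second step: establishing and applying the scale-critical linear estimate with the sharp $|\alpha|^{-1/2}|\log|\alpha||$ dependence, because this is what dictates the smallness condition \eqref{est.thm.main.1} and is the only mechanism that defeats the two-dimensional Stokes paradox. Provided that estimate is available, the reduction, the fixed-point argument, and the asymptotic identification of $\beta$ proceed by the outlined steps.
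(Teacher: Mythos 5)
Your overall outline (lift the boundary data, run a Banach fixed point in a scale-critical space, identify $\beta$ by testing against $x^\bot$ with a Gaussian cut-off) matches the paper's strategy, and you correctly spot the two crucial structural facts: that $u\otimes u$ is symmetric so its antisymmetric part drops out, and that the rotation is what defeats the Stokes paradox. But there is a genuine gap in Step 2, and it is exactly the difficulty the paper highlights.

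The linear estimate you write down,
\begin{equation*}
\|w\|_{L^\infty_1(\Omega)}\ \lesssim\ |\alpha|^{-1/2}\bigl|\log|\alpha|\bigr|\,\bigl(\|G\|_{L^\infty_2}+\|G_{12}-G_{21}\|_{L^1}+\|\operatorname{div}G\|_{L^2}\bigr),
\end{equation*}
is not sharp enough, and with it your single-ball contraction cannot close. Take the quadratic source $G=w\otimes w$ with $\|G\|_{L^\infty_2}\le\rho^2$ and $\rho\sim|\alpha|^{1/2}|\log|\alpha||$ as you propose: you get $|\alpha|^{-1/2}|\log|\alpha||\,\rho^2\sim|\alpha|^{1/2}|\log|\alpha||^3\gg\rho$ as $|\alpha|\to0$, so the map is not a contraction. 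Forcing it to close would require $\epsilon$ in \eqref{est.thm.main.1} to depend on $\alpha$, which defeats the theorem. The paper's linear estimate \eqref{est.thm.linear.exterior.4'} has a finer structure: the prefactor in front of $\|F\|_{L^\infty_2}$ is only $|\log|\alpha||$, while $|\alpha|^{-1/2}$ (with no extra log) multiplies the \emph{$L^2$} norm of $F$, and the $L^1$ antisymmetric term enters with no $\alpha$-singularity at all. To exploit this distinction, the iteration must track at least two quantities with \emph{different} powers of $|\alpha|$: a local/energy tier $\delta_1\sim|\alpha|$ (bounding $|\beta|$, $\|\nabla w\|_{L^2}$, $\|w\|_{L^\infty(\Omega_{5R_0})}$) and a far-field tier $\delta_2\sim|\alpha|^{1/2}/|\log|\alpha||$ bounding $\|w\|_{L^\infty_1}$ — that is the set $\mathcal{B}_{\vec\delta,\gamma}$ with $(\beta,w)$ treated jointly as the unknown. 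Note $\delta_2$ has the log \emph{downstairs}, not upstairs as in your proposed radius; this is what makes the self-interaction term small.

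A related omission is the control of $\|w\otimes w\|_{L^2}$ and $\|V\otimes w\|_{L^2}$, which feed into the $|\alpha|^{-1/2}\|\cdot\|_{L^2}$ part of the linear estimate. Since the Hardy inequality $\|f/|x|\|_{L^2}\le C\|\nabla f\|_{L^2}$ fails in exterior two-dimensional domains, you need the logarithmically corrected version (Lemma A.1, $\|f/(1+|x|)\|_{L^2}\lesssim\|\nabla f\|_{L^2}|\log\|\nabla f\|_{L^2}|$) to trade $L^\infty_1$ decay plus $\dot W^{1,2}$ control into an $L^2$ bound without an $L^1$ hypothesis. Without this, the $\|\operatorname{div}G\|_{L^2}$ and $\|G\|_{L^2}$ contributions cannot be estimated in the claimed scale-critical framework. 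Once these two points are repaired — the two-tier ball with its distinct $\alpha$-powers, and the logarithmic Hardy inequality — the rest of your outline (the asymptotic identification of $\beta$ via the adjoint test function $x^\bot$, and the $o(|x|^{-1})$ remainder under $F\in L^\infty_{2,0}$ via approximation) does proceed as in the paper.
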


\begin{remark} (i) The smallness condition on $f$ and $F$ in \eqref{est.thm.main.1} can be slightly weakened with respect to the dependence on $\alpha$;  see Theorem \ref{thm.nonlinear} for details.

\noindent 
(ii) Both conditions $F\in L^\infty_2 (\Omega)^{2\times 2}$ and $F_{12}-F_{21}\in L^1 (\Omega)$
are critical in view of scaling. Note that the $L^1$ summability is needed only for the antisymmetric part of $F$.
These conditions are not enough to ensure that  $u$ behaves like the circular flow $\beta \frac{x^\bot}{4\pi |x|^2}$ as $|x|\rightarrow \infty$,
and the additional decay condition $F\in L^\infty_{2,0} (\Omega)^{2\times 2}$ as in Theorem \ref{thm.main} is required to achieve this asymptotic property.

\noindent 
(iii) The second term of the right-hand side of \eqref{est.thm.main.3} is well-defined if $F\in L^\infty_{2,0} (\Omega)$ and $F_{12}-F_{21}\in L^1 (\Omega)$. If $F$ possesses an additional decay such as $L^\infty_{2+\gamma} (\Omega)$ with $\gamma\in (0,1)$ then the order $o(|x|^{-1})$ in \eqref{est.thm.main.2} is replaced by 
$O(|x|^{-1-\gamma})$ at least when $|\alpha|$ and given data $f$ are further small depending on $\gamma$.
The precise statement on this result is stated in Theorem \ref{thm.nonlinear}. 

\noindent 
(iv) The pressure $p$ is determined uniquely up to a constant and belongs to $W^{1,2}_{loc} (\overline{\Omega})$. Then, since $u\in W^{2,2}_{loc}(\overline{\Omega})^2$, the coefficient $\beta$ in \eqref{est.thm.main.3} is well-defined. 

\noindent 
(v) In Theorem \ref{thm.main} we assume that the external force $f$ is of divergence form. 
In fact, this is not an essential assumption, and it is possible to deal with the external force $f$ satisfying 
\begin{align}\label{rem.add.1}
 x^\bot \cdot f\in L^1 (\Omega)\,, \qquad f\in L^\infty_{3}(\Omega)^2\,,
\end{align}
with the smallness in these norms, and the asymptotic expansion \eqref{est.thm.main.2} is verified if $f\in L^\infty_{3,0} (\Omega)^2$ in addition. This is obtained by using the recent result by the authors \cite{HMN} in the whole space which solves the linearized problem for $f$ satisfying \eqref{rem.add.1}.
Although this result is not so trivial since the condition \eqref{rem.add.1} is just in the scale-critical regime,
we focus only on $f$ of divergence form in this paper, for the argument becomes shorter due to the fact that the nonlinear term is also written in the divergence form as ${\rm div}\, (u\otimes u)$.    
\end{remark}

As far as the authors know, Theorem \ref{thm.main} is the first general existence result of the flows around a rotating obstacle {\it in the two-dimensional case}.
Before stating the idea of the proof of Theorem \ref{thm.main}, 
let us recall some known results on the mathematical  analysis of flows around a rotating obstacle.

So far the mathematical results on this topic have been obtained mainly for the three-dimensional problem,
as listed below.
For the nonstationary problem the existence of global weak solutions is proved by Borchers \cite{Bo}, and the unique existence of time-local regular solutions is shown by Hishida \cite{H1} and Geissert, Heck, and Hieber \cite{GHH}, while the global strong solutions for small data are obtained by Galdi and Silvestre \cite{GSi}. The spectrum of the linear operator  related to this problem is studied by Farwig and Neustupa \cite{FN}; see also the linear analysis by Hishida \cite{H2}. The existence of stationary solutions to the associated system is proved in \cite{Bo}, Silvestre \cite{Si},  Galdi \cite{G1}, and Farwig and Hishida \cite{FH0}. In particular, in \cite{G1} the stationary flows with the decay order  $O(|x|^{-1})$ are obtained, while the work of \cite{FH0} is based on the weak $L^{3}$ framework, which is another natural scale-critical space for the three-dimensional Navier-Stokes equations. Our Theorem \ref{thm.main} is considered as a two-dimensional counterpart of the three-dimensional result of \cite{G1}.
In $3$D case the asymptotic profiles of these stationary flows at spatial infinity are studied by Farwig and Hishida \cite{FH1,FH2} and Farwig, Galdi, and Kyed \cite{FGK}, where it is proved that the asymptotic profiles are described by the Landau solutions, stationary self-similar solutions to the Navier-Stokes equations in 
$\R^3\setminus\{0\}$. 
It is worthwhile to mention that, also in the two-dimensional case,
the asymptotic profile is given by the stationary self-similar solution $c \frac{x^\bot}{|x|^2}$, 
as is shown in Theorem \ref{thm.main}.
The stability of the above stationary solutions has been well studied in the three-dimensional case;
The global $L^2$ stability is proved in \cite{GSi}, 
and the local $L^3$ stability is obtained by Hishida and Shibata \cite{HShi}.

All results mentioned above are in the three-dimensional case, 
while only a few results are known so far for the flow around a rotating obstacle in the two-dimensional case. 
Recently an important progress has been made by Hishida \cite{H3}, 
where the asymptotic behavior of the two-dimensional stationary Stokes flow around a rotating obstacle is investigated in details.
The equations studied in \cite{H3} are written as 
\begin{equation}\tag{S$_{\alpha}$}\label{S_alpha.intro}
  \left\{
\begin{aligned}
  -\Delta u - \alpha ( x^\bot \cdot \nabla u - u^\bot ) + \nabla p & \,=\, f \,,  
  \quad {\rm div}\, u \,=\, 0\,, ~~~~~~ x \in \Omega, \\
  u & \,=\, b \,, ~~~~~~ x \in \partial \Omega\,. \\
  u & \, \rightarrow \,     0   \,,  ~~~~~~ |x| \rightarrow \infty\,,
\end{aligned}\right.
\end{equation}
Here $b$ is a given smooth function on $\partial\Omega$. It is proved in \cite{H3} that if $\alpha \ne 0$ and the smooth external force  $f$ satisfies the decay conditions
\begin{align}\label{condition.f}
\int_\Omega |x| |f| \dd x <\infty\,, \qquad f(x) \, = \, o \big (|x|^{-3} (\log |x|)^{-1} \big )\,, \quad {\rm as} ~|x|\rightarrow \infty\,,
\end{align}
then the solution $u$ to \eqref{S_alpha.intro} decaying at spatial infinity obeys the asymptotic expansion
\begin{align}\label{asymptotic.h.1}
u(x) \, = \, \frac{c_1 x^\bot - 2 c_2 x}{4\pi |x|^2} \, + \, (1+ |\alpha|^{-1}) \, o (|x|^{-1})\,, \qquad {\rm as} \quad  |x|\rightarrow \infty\,,
\end{align}
where 
\begin{align}\label{asymptotic.h.2}
\begin{split}
c_1 & \, = \, \int_{\partial\Omega} y^\bot \cdot \big ( T (u,p)  + \alpha \,  b \otimes y^\bot  \big ) \nu \dd \sigma_y  + \int_\Omega y^\bot \cdot f \dd y\,,\\
c_2 & \, = \, \int_{\partial\Omega} b \cdot \nu \dd \sigma_y\,. 
\end{split}
\end{align}
The result of \cite{H3} leads to an important conclusion that the rotation of the obstacle resolves the Stokes paradox (see Chang and Finn \cite{CF} for the rigorous description of the Stokes paradox) 
as in the Oseen resolution. 
We recall that when the obstacle is translating with a constant velocity $u_\infty\in \R^2\setminus \{0\}$ the Navier-Stokes flows have been constructed by Finn and Smith \cite{FS1,FS2} for small but nonzero $u_\infty$ through the analysis of the Oseen linearization; see also Galdi \cite{G3}. 
The resolution of the Stokes paradox for \eqref{S_alpha.intro} is due to the fact that the rotation removes the logarithmic singularity of the associated fundamental solution, which has been well known for the Oseen problem.

As a reference to the $2$D exterior problem related with ours, the reader is referred to a recent work by Hillairet and Wittwer \cite{HW}, where the stationary problem of \eqref{NS} is discussed when $\Omega (t) = \Omega=\{y\in \R^2~|~|y|>1\}$ and the boundary condition is given as $v=\alpha y^\bot + b$ with a smooth and time-independent $b$. 
We note that the stationary flow $\alpha \frac{y^\bot}{|y|^2}$ exactly solves this problem when $b=0$.
When $\alpha$ is large enough and $b$ is sufficiently small the stationary solutions are constructed  in \cite{HW} around the explicit solution $\tilde \alpha \frac{y^\bot}{|y|^2}$, where $\tilde \alpha$ a number  close to $\alpha$. Although the problem discussed in \cite{HW} is in fact different from ours due to the time-independent given data $b$ in the original frame \eqref{NS}, the solutions obtained in \cite{HW} share a common property with the ones in Theorem \ref{thm.main} in view of their asymptotic behaviors at spatial infinity.

It is well known that the existence of stationary Navier-Stokes flows in two-dimensional exterior domains  (hence, formally $\alpha=0$ in \eqref{NS_alpha.intro}) is an open problem in general. 
Partial results related to this problem have been obtained by 
Galdi \cite{G2}, Russo \cite{R}, Yamazaki \cite{Y}, and Pileckas and Russo \cite{PR},
where the solutions are constructed under some symmetry conditions on both domains and given data.
In particular, the Navier-Stokes flows decaying in the scale-critical order $O(|x|^{-1})$ are obtained in \cite{Y} in this category. The uniqueness is also available again under some symmetry conditions, see Nakatsuka \cite{N}.

The stability of the stationary solutions obtained in \cite{Y,HW} or in Theorem \ref{thm.main} is a highly challenging issue due to their spatial decay in the scale-critical order in two-dimensions, 
and it is still an open question in general. The difficulty is brought from the fact that the Hardy inequality 
$\| \frac{1}{|x|} f\|_{L^2 (\Omega)} \leq C \| \nabla f \|_{L^2(\Omega)}$, $f\in \dot{W}_0^{1,2}(\Omega)$, 
does not hold when $\Omega$ is an exterior domain in $\R^2$.
As far as the authors know, the only result available so far is \cite{M} by the second author of this paper,
where the local $L^2$ stability is established for the special solution $\alpha \frac{x^\bot}{|x|^2}$, $|\alpha|\ll 1$, when $\Omega$ is the exterior domain to the unit disk.

Finally, let us state the key idea for the proof of Theorem \ref{thm.main}. Our approach is motivated by the linear analysis developed in \cite{H3}, where \eqref{asymptotic.h.2} is obtained through the detailed analysis of the fundamental solution associated to the system \eqref{S_alpha.intro} in $\R^2$. The expansion \eqref{asymptotic.h.1} strongly indicates that 
the similar asymptotics is valid also for the Navier-Stokes flow, since the leading profile in \eqref{asymptotic.h.1} is a stationary self-similar solution to the Navier-Stokes equations in $\R^2\setminus\{0\}$. Thus our strategy for the proof of Theorem \ref{thm.main} can be summarized as follows: we derive at the same time the unique existence of solutions and their asymptotic behavior under the smallness condition on the given data $(\alpha,f)$ in \eqref{NS_alpha.intro}. The solution of the form of $u=\beta \frac{x^\bot}{|x|^2} + w$ is constructed through the Banach fixed point theorem, where  both the coefficient $\beta$ and the remainder term $w$ are sufficiently small corresponding to the size of $(\alpha,f)$. 
However, it is far from trivial to justify this idea directly from the results of \cite{H3}, 
especially to ensure the smallness of $(\beta, w)$ in the iteration scheme.
Indeed, there are at least two difficulties for this procedure: (I)
the condition \eqref{condition.f} is slightly restrictive to handle the nonlinear term $u\cdot \nabla u$ in the scale-critical framework, and more seriously, (II) the singularity on $|\alpha|$ in \eqref{asymptotic.h.1} for $0<|\alpha|\ll 1$ can prevent us closing the nonlinear estimates. In fact, the smooth flows subject to the system \eqref{NS_alpha.intro} are pointwise bounded above by $|\alpha|$ near the boundary due to the boundary condition $u= \alpha x^\bot$.

For resolving the difficulty (I), the structure of the nonlinear term $\nabla\cdot (u\otimes u)$ is essential.
Indeed, the symmetry of the tensor $u\otimes u$ leads to a crucial cancellation for the coefficient ``$\int_\Omega y^\bot \cdot (u\cdot \nabla u) \dd y$'', which removes a possible singularity caused by the scale-critical decay of the flow. 
To overcome the difficulty (II), we revisit 
the argument of \cite{H3} analyzing the fundamental solution to \eqref{S_alpha.intro} in $\R^2$
and 
modify the singularity of $\alpha$ in
the estimates of the remainder term for the linear problem; see Theorem \ref{thm.linear.whole}, Lemma \ref{lem.thm.linear.whole.1}, and Theorem \ref{thm.linear.exterior}.
Applying these improved estimates, the nonlinear problem \eqref{NS_alpha.intro} is solved 
by the Banach fixed point theorem.
However,
the argument becomes complicated since we have to control two kinds of norms; 
the one bounds the local quantity, while the other one controls the spatial decay. 
This machinery is needed since the flow in a far field region ($|x|\gg 1$) exhibits a different dependence on $|\alpha|$ from the flow in a finite fluid region, and in principle, the problem becomes more singular at $|x|\gg 1$ as $|\alpha|$ is decreasing. In order to close the nonlinear estimates it is important to distinguish these two dependences on $|\alpha|$
and to estimate their interaction through the nonlinearity carefully.

This paper is organized as follows. In Section \ref{sec.pre} the basic results on the oscillatory integrals are collected, which are used to establish the pointwise estimates of the fundamental solution to \eqref{S_alpha.intro} with a milder singularity on $|\alpha|$, $|\alpha|\ll 1$. In Section \ref{sec.linear} the linearized problem \eqref{S_alpha.intro} with $b=0$ is studied in details. Section \ref{subsec.linear.whole} is devoted to the analysis in $\R^2$, while the exterior problem is discussed in Section \ref{subsec.linear.exterior}. Finally the nonlinear problem \eqref{NS_alpha.intro} is solved in Section \ref{sec.nonlinear}.

\section{Preliminaries}\label{sec.pre}

In this section we collect the results of the oscillatory integrals used in Section \ref{subsec.linear.whole}.

\begin{lemma}\label{lem.thm.linear.whole.2}
Let $\alpha \in \R \setminus \{0\}$ 
and let $m, r>0$. Then we have
\begin{equation}\label{est.lem.thm.linear.whole.2.1}
\bigg| 
\int_{0}^{\infty} e^{i\alpha t} e^{ -\frac{r^2}{t} }\frac{\dd t}{t^m} \bigg| 
+
\bigg| 
\int_{0}^{\infty} e^{i\alpha t} \int_{t}^{\infty}
e^{ -\frac{r^2}{s} } \frac{\dd s}{s^{m+1}} \dd t \bigg| 
\leq C 
\min
\big\{
\frac{1}{|\alpha |r^{2m}},
\frac{1}{ |\alpha|^{\frac{1}{m+1}} r^{\frac{2m^2}{m+1}}}
\big\}\,,
\end{equation}
where  $C=C(m)$ is independent of $r$ and $\alpha$. 
Moreover, for $m> 1$ we have
\begin{equation}\label{est.lem.thm.linear.whole.2.2}
\int_{0}^{\infty} e^{ -\frac{r^2}{t} } \frac{ \dd t }{ t^m }  
= \frac{ \gamma(m-1) }{ r^{2(m-1)} }\,, 
\qquad
\displaystyle \int_{0}^{\infty} 
\displaystyle \int_{t}^{\infty}
e^{ -\frac{r^2}{s} } \frac{\dd s}{s^{m+1}} \dd t
= \frac{ \gamma(m-1) }{ r^{2(m-1)} }\,, 
\end{equation}

\noindent 
where $\gamma(\cdot)$ denotes the Euler gamma function.
\end{lemma}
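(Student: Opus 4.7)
The identities in \eqref{est.lem.thm.linear.whole.2.2} are elementary. The first follows from the change of variables $u = r^2/t$, which reduces the integral to $r^{-2(m-1)}\int_0^\infty e^{-u}u^{m-2}\,\dd u = \gamma(m-1)r^{-2(m-1)}$. The second follows by Fubini's theorem (swapping the order of integration, the inner $\dd t$ integral equals $s$), which converts it into the first identity.

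For the oscillatory estimates in \eqref{est.lem.thm.linear.whole.2.1}, denote the two integrals by $I$ and $J$. The bound $C/(|\alpha|r^{2m})$ is obtained by a single integration by parts against $e^{i\alpha t}/(i\alpha)$. For $I$, the boundary terms vanish because $e^{-r^2/t}t^{-m}$ tends to $0$ at both endpoints (exponentially as $t\to 0^+$, and as $t^{-m}$ at infinity for $m>0$), and the derivative $e^{-r^2/t}(r^2/t^{m+2}-m/t^{m+1})$ is bounded in absolute value by applying the identity in \eqref{est.lem.thm.linear.whole.2.2} to the exponents $m+1$ and $m+2$. For $J$, writing $\psi(t) := \int_t^\infty e^{-r^2/s}s^{-m-1}\dd s$, one has $\psi(0) = \gamma(m)r^{-2m}$, $\psi(\infty)=0$, and $\psi'(t) = -e^{-r^2/t}t^{-m-1}$; one IBP therefore produces a boundary contribution of size $\gamma(m)/(|\alpha|r^{2m})$ together with a convergent remainder of the same order.

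For the sharper bound $C/(|\alpha|^{1/(m+1)}r^{2m^2/(m+1)})$, the plan is to rescale $t = r^2\tau$, which casts $I$ in the form $r^{2(1-m)}g(\tilde\alpha)$ with $\tilde\alpha := \alpha r^2$ and $g(\tilde\alpha) := \int_0^\infty e^{i\tilde\alpha\tau}e^{-1/\tau}\tau^{-m}\,\dd\tau$; an analogous reduction applies to $J$. Since $|\tilde\alpha|^{-1/(m+1)} = |\alpha|^{-1/(m+1)}r^{-2/(m+1)}$, combining with the prefactor $r^{2(1-m)}$ yields exactly the claimed exponent $-2m^2/(m+1)$ on $r$, so everything reduces to proving the one-variable bound $|g(\tilde\alpha)| \leq C|\tilde\alpha|^{-1/(m+1)}$. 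For $|\tilde\alpha|\geq 1$ this is a fortiori weaker than the IBP bound $|g(\tilde\alpha)| \leq C/|\tilde\alpha|$ already obtained. For $|\tilde\alpha|\leq 1$, I would split $g = \int_0^T + \int_T^\infty$ with $T := |\tilde\alpha|^{-1/(m+1)}\geq 1$: the near piece is handled by the non-oscillatory bound (of order $\gamma(m-1)$ when $m>1$, and of order $T^{1-m}$ when $0<m<1$ using $e^{-1/\tau}\leq 1$), and the tail is handled by one IBP, which gives a contribution of order $T^{-m}/|\tilde\alpha| = |\tilde\alpha|^{-1/(m+1)}$.

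The main technical obstacle is the range $0 < m \leq 1$, where $e^{-1/\tau}\tau^{-m}$ is not in $L^1(0,\infty)$ and one cannot invoke a direct interpolation between an $L^1$ bound and the $C/|\tilde\alpha|$ bound. The splitting above circumvents this, but one must verify that the exponent produced by the near piece, namely $T^{1-m} = |\tilde\alpha|^{-(1-m)/(m+1)}$, is dominated by the target $|\tilde\alpha|^{-1/(m+1)}$ when $|\tilde\alpha|\leq 1$; this reduces to $-(1-m)/(m+1)\geq -1/(m+1)$, i.e., $m\geq 0$, which is trivial. The borderline case $m=1$ gives a logarithm in the near part and is absorbed by $\log(1/|\tilde\alpha|)\leq C|\tilde\alpha|^{-1/2}$ for $|\tilde\alpha|\leq 1$.
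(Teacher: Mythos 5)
Your argument is correct and follows essentially the same strategy as the paper's proof: the change of variables and Fubini for \eqref{est.lem.thm.linear.whole.2.2}, a global integration by parts for the $C/(|\alpha|r^{2m})$ bound, and a split of the $t$-integral at an optimally chosen scale, with the near piece bounded non-oscillatorially and the far piece by integration by parts, for the sharper bound. Your rescaling $t=r^{2}\tau$ simply repackages the paper's choice of split point $l=r^{2m/(m+1)}|\alpha|^{-1/(m+1)}$ as $T=l/r^{2}=|\alpha r^{2}|^{-1/(m+1)}$, so the mathematical content is the same.
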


\begin{proof}
The proof of \eqref{est.lem.thm.linear.whole.2.2} is a straightforward computation, 
and we omit the details.
To show \eqref{est.lem.thm.linear.whole.2.1} let us take a positive constant $l = l(r, \alpha) $ 
which will be determined later and split the integral as
\begin{equation*}
\begin{split} 
\int_{0}^{\infty} 
e^{i\alpha t} e^{ -\frac{r^2}{t} }\frac{\dd t}{t^m}
& \,=\,
\int_{0}^{l} e^{i\alpha t} e^{ -\frac{r^2}{t} }\frac{\dd t}{t^m}
+ \int_{l}^{\infty} e^{i\alpha t} e^{ -\frac{r^2}{t} }\frac{\dd t}{t^m}\,.
\end{split}
\end{equation*}
The first term is estimated without using the effect of oscillation:
\begin{equation*}
\begin{split} 
\bigg| \int_{0}^{l} e^{i\alpha t} e^{ -\frac{r^2}{t} } \frac{\dd t}{t^m} \bigg|
& \leq 
\frac{1}{r^{2m}}
\int_{0}^{l} e^{ -\frac{r^2}{t} }\bigg( \frac{r^2}{t} \bigg)^m \dd t 
\leq
\frac{Cl}{r^{2m}}\,.
\end{split}
\end{equation*}
For the second term we use the effect of oscillation to obtain
\begin{equation*}
\begin{split} 
\int_{l}^{\infty} e^{i\alpha t} e^{ -\frac{r^2}{t} } \frac{\dd t}{t^m}
& \,=\, 
\frac{1}{i \alpha}
\int_{l}^{\infty} \frac{\dd}{\dd t} \bigg[ e^{i \alpha t} \bigg]
\frac{ e^{ -\frac{r^2}{t}} }{t^m} \dd t \\
& \,=\, 
\frac{1}{i \alpha} 
\bigg[ 
e^{i \alpha t} 
\frac{ e^{ -\frac{r^2}{t}} }{t^m}
\bigg]^{t = \infty}_{t = l}
- \frac{1}{i \alpha} 
\int_{l}^{\infty} 
e^{i \alpha t} 
\bigg(
\frac{ r^2 e^{ -\frac{r^2}{t}} }{ t^{m+2} }
- \frac{m e^{ -\frac{r^2}{t}} }{t^{m+1}} 
\bigg) \dd t\,,
\end{split}
\end{equation*}
which yields
\begin{equation}\label{proof.lem.thm.linear.whole.2.1.1}
\begin{split} 
\bigg|
\int_{l}^{\infty} 
e^{i\alpha t} 
e^{ -\frac{r^2}{t} } 
\frac{\dd t}{t^m}
\bigg|
& \le
\frac{1}{|\alpha| } 
\bigg(
\frac{ e^{-\frac{r^2}{l}} }{l^m}
+ \frac{1}{r^{2(m+1)}}
\int_{l}^{\infty}  
\big ( \frac{r^2}{t} + m\big )
\bigg( \frac{r^2}{t} \bigg)^{m+1}  e^{-\frac{r^2}{t} } \dd t
\bigg)\,.
\end{split}
\end{equation}
By taking the limit of $l=0$ we observe that the left-hand side of
\eqref{proof.lem.thm.linear.whole.2.1.1} is then bounded from above by $\frac{C}{|\alpha| r^{2m}}$
in virtue of \eqref{est.lem.thm.linear.whole.2.2}.  
On the other hand, the right-hand side of \eqref{proof.lem.thm.linear.whole.2.1.1} is also bounded from above by 
$\frac{C}{|\alpha| l^{m}}$.
Taking  $l = r^{\frac{2m}{m+1}} |\alpha|^{-\frac{1}{m+1}}$, we have arrived at 
\begin{equation*}
\begin{split} 
\bigg|
\int_{0}^{\infty} 
e^{i\alpha t} e^{ -\frac{r^2}{t} }\frac{\dd t}{t^m}
\bigg|
& \le
\frac{C}{ |\alpha|^{\frac{1}{m+1}} r^{\frac{2m^2}{m+1}}}\,.
\end{split}
\end{equation*}
The estimate of the integral
\begin{equation*}
\int_{0}^{\infty} e^{i\alpha t} \int_{t}^{\infty}
e^{ -\frac{r^2}{s} } \frac{\dd s}{s^{m+1}} \dd t
\end{equation*}
is obtained exactly in the same manner, and hence the details are omitted here.
The proof is complete.
\end{proof}
%
\begin{lemma}\label{lem.thm.linear.whole.3}
Let $m>1$. Then we have
\begin{equation}\label{est.lem.thm.linear.whole.3.1}
\begin{split}
&~~~
\int_{0}^{\infty} 
\big | e^{-\frac{|O(\alpha t)x-y|^2}{4t}} - e^{-\frac{|x|^2}{4t}} \big |
\frac{\dd t}{t^m} 
 \, + \, \int_{0}^{\infty}  \int_{t}^{\infty}
\big | e^{-\frac{|O(\alpha t)x-y|^2}{4s}} - e^{-\frac{|x|^2}{4s}} \big |
\frac{\dd s}{s^{m+1}} \dd t  \\
& \leq C
\frac{|y|}{ |x|^{2m-1} }\,, \qquad \quad |x| > 2 |y|\,,
\end{split}
\end{equation}
and
\begin{equation}\label{est.lem.thm.linear.whole.3.2}
\bigg| 
\int_{0}^{\infty} 
e^{i \alpha t} e^{-\frac{|x|^2}{4t}} \frac{\dd t}{t^m} 
\bigg| 
\leq C
\min \big\{ \frac{1}{|\alpha| |x|^{2m}}, \frac{1}{|x|^{2(m-1)}} \big\}\,, 
\qquad  |x| >0\,.
\end{equation}
Moreover, for $m > 1$ we have
\begin{equation}\label{est.lem.thm.linear.whole.3.3}
\bigg| \displaystyle \int_{0}^{\infty} 
e^{i \alpha t}
\displaystyle \int_{t}^{\infty}
e^{-\frac{|x|^2}{4s}} 
\frac{\dd s}{s^{m+1}} \dd t 
\bigg| 
\leq C
\min \big\{ \frac{1}{|\alpha| |x|^{2m}}, \frac{1}{|x|^{2(m-1)}} \big\}\,,
\qquad  |x|>0 \,.
\end{equation}
Here  $C=C(m)$ is independent of $x$, $y$, and $\alpha$.
\end{lemma}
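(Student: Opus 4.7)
The plan is to treat the three estimates \eqref{est.lem.thm.linear.whole.3.1}, \eqref{est.lem.thm.linear.whole.3.2}, and \eqref{est.lem.thm.linear.whole.3.3} separately, reducing each to Lemma \ref{lem.thm.linear.whole.2}. The oscillation in $t$ plays no role in \eqref{est.lem.thm.linear.whole.3.1} since the integrand is already taken in absolute value; it only enters through \eqref{est.lem.thm.linear.whole.3.2} and \eqref{est.lem.thm.linear.whole.3.3}, and there it is handled directly by the previous lemma.

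For \eqref{est.lem.thm.linear.whole.3.1}, I would apply the mean value theorem to the exponential in the form $|e^{-a}-e^{-b}|\le |a-b|\,e^{-\min(a,b)}$. Setting $a := |O(\alpha t)x - y|^2/(4t)$ and $b := |x|^2/(4t)$ and exploiting the rotation invariance $|O(\alpha t)x|=|x|$, a direct expansion yields
\[
|a-b| \, = \, \frac{\bigl| -2\,O(\alpha t)x\cdot y + |y|^2 \bigr|}{4t} \, \le \, \frac{C|x||y|}{t}\,,
\]
while the hypothesis $|x|>2|y|$ gives $|O(\alpha t)x-y|\ge |x|/2$, and therefore $\min(a,b) \ge c |x|^2/t$. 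Combining these bounds, the integrand on the left-hand side of \eqref{est.lem.thm.linear.whole.3.1} is controlled by $C(|x||y|/t)\,e^{-c|x|^2/t}$. Substituting this into the two integrals reduces them to moment integrals of Gaussians of the type evaluated in \eqref{est.lem.thm.linear.whole.2.2} (applied with $m$ replaced by $m+1$ and with $r$ proportional to $|x|$), and the resulting bound $C|x||y|/|x|^{2m} = C|y|/|x|^{2m-1}$ is exactly what is claimed.

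For \eqref{est.lem.thm.linear.whole.3.2} I would apply \eqref{est.lem.thm.linear.whole.2.1} with $r := |x|/2$ to obtain the factor $C/(|\alpha||x|^{2m})$, and independently discard the oscillation $|e^{i\alpha t}|=1$ and invoke \eqref{est.lem.thm.linear.whole.2.2} (whose hypothesis $m>1$ is assumed here) to get the alternative bound $C/|x|^{2(m-1)}$; the minimum of these two bounds yields the claim. The estimate \eqref{est.lem.thm.linear.whole.3.3} follows by the same argument using the tail-integral versions in both \eqref{est.lem.thm.linear.whole.2.1} and \eqref{est.lem.thm.linear.whole.2.2}.

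The only subtle point is the bookkeeping in \eqref{est.lem.thm.linear.whole.3.1}: the role of the restriction $|x|>2|y|$ is precisely to supply the lower bound on $\min(a,b)$ that must survive the mean value step so that the remaining $t$-integral falls within the scope of the moment identity \eqref{est.lem.thm.linear.whole.2.2}. Once that has been secured, the remainder of the proof is a routine invocation of Lemma \ref{lem.thm.linear.whole.2} and requires no further analytic input.
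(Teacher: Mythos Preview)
Your proof is correct and follows essentially the same reduction to Lemma~\ref{lem.thm.linear.whole.2} as the paper. For \eqref{est.lem.thm.linear.whole.3.2} and \eqref{est.lem.thm.linear.whole.3.3} the two arguments are identical. For \eqref{est.lem.thm.linear.whole.3.1} there is a minor technical difference worth recording: the paper expands $e^{-|O(\alpha t)x-y|^2/(4t)}$ to \emph{second order} in $y$ around $y=0$, obtaining an explicit linear term $\frac{\langle O(\alpha t)x,y\rangle}{2t}e^{-|x|^2/(4t)}$ plus a quadratic remainder evaluated at an intermediate point $\theta y$, and then bounds each piece by the Gaussian moment identities \eqref{est.lem.thm.linear.whole.2.2}. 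Your route via the first-order mean value bound $|e^{-a}-e^{-b}|\le |a-b|\,e^{-\min(a,b)}$ is more economical here and gives the same final estimate, since the factor $|a-b|\le C|x||y|/t$ already produces exactly the extra $t^{-1}$ needed to land on \eqref{est.lem.thm.linear.whole.2.2} with $m$ shifted by one. The paper's second-order expansion is not wasted effort, however: the same formula (with its explicit linear term) is reused verbatim in the proof of Lemma~\ref{lem.thm.linear.whole.1}, where isolating that linear term is essential to extract the oscillatory cancellations. For the present lemma alone, your shorter argument suffices.
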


\begin{proof} 
By using the Taylor formula with respect to $y$ around $y=0$, we see 
\begin{equation}\label{formulation}
e^{-\frac{|O(\alpha t)x-y|^2}{4t}} 
\,=\, 
e^{-\frac{|x|^2}{4t}} + \frac{\langle O (\alpha t) x, y\rangle}{2 t} e^{-\frac{|x|^2}{4t}} + \frac{\langle y, Q y\rangle}{8 t^2} e^{-\frac{ |O(\alpha t)x-\theta y|^2 }{4t}}\,,
\end{equation}
where $Q= \big (O (\alpha t) x- \theta y \big ) \otimes \big ( O (\alpha t) x- \theta y \big ) - 2 t \mathbb{I}$
with $\theta =\theta(\alpha,t,x,y)\in (0,1)$ and $\langle x,y\rangle = x\cdot y$.
From 
\begin{align*}
|O(\alpha t)x-\theta y| \geq |x| - |y| > \frac{|x|}{2}\,, \qquad |x|> 2|y|\,, 
\end{align*}
Lemma \ref{lem.thm.linear.whole.2} leads to
\begin{equation*}
\begin{split}
& \int_{0}^{\infty} 
\big | e^{-\frac{|O(\alpha t)x-y|^2}{4t}} - e^{-\frac{|x|^2}{4t}} \big |
\frac{\dd t}{t^m}  \\
& \leq 
C \bigg( |x| |y|  \int_{0}^{\infty} e^{-\frac{|x|^2}{4t}} \frac{\dd t}{t^{m+1}} 
+  (|x|^2 |y|^2 + |x| |y|^3 + |y|^4) \int_{0}^{\infty} e^{-\frac{|x|^2}{16t}} 
\frac{\dd t}{ t^{m+2} } \bigg) \\
& \leq \frac{ C |y| }{ |x|^{2m-1} }\,, \qquad |x| > 2 |y|\,.
\end{split}
\end{equation*}
Similarly we have from Lemma \ref{lem.thm.linear.whole.2},
\begin{equation*}
\int_{0}^{\infty}  \int_{t}^{\infty}
\big | e^{-\frac{|O(\alpha t)x-y|^2}{4s}} - e^{-\frac{|x|^2}{4s}} \big |
\frac{\dd s}{s^{m+1}} \dd t  \leq 
\frac{ C |y| }{ |x|^{2m-1} }\,, \qquad |x| > 2 |y|\,.
\end{equation*}
The proof of \eqref{est.lem.thm.linear.whole.3.1} is complete. Since $m>1$, 
the estimates \eqref{est.lem.thm.linear.whole.3.2} and \eqref{est.lem.thm.linear.whole.3.3} are consequences of \eqref{est.lem.thm.linear.whole.2.1} and \eqref{est.lem.thm.linear.whole.2.2}. The proof is complete.
\end{proof}

\section{Stokes system with a rotation effect}\label{sec.linear}

This section is devoted to the analysis of the linearized problem \eqref{S_alpha.intro}, introduced in Section \ref{intro}, with $b=0$.

\subsection{Linear estimate in the whole plane}\label{subsec.linear.whole}

In this subsection let us consider the linear problem in whole plane for $\alpha\in \R \setminus \{0\}$:
\begin{equation}\tag{${\rm S}_{\alpha,\R^2}$}\label{a}
- \Delta u -\alpha (x^{\perp} \cdot \nabla u-u^{\perp})+ \nabla p \, =\, f\,, 
\qquad {\rm div}\ u \, =\, 0\,,  
~~~~~~ x \in  \mathbb{R}^2\,.
\end{equation}
Our main interest is the estimate of solutions that are represented in terms of the fundamental solution defined by \eqref{def.fundamental.solution} below. We will see that such solutions decay at spatial infinity for a suitable class of $f$ in virtue of the effect from the rotation; see also Remark \ref{rem.thm.linear.whole} about the uniqueness for  solutions to \eqref{a}. 
The couple $(u,p)$ is said to be a weak solution to \eqref{a} 
if $(u, p)\in L^{q_1}(\R^2)^2\times L^{q_2} (\R^2)$ for some $q_1\in [2,\infty)$ and $q_2\in [1,\infty)$, 
and (i) ${\rm div}\, u=0$ in the sense of distributions, and (ii) $(u,p)$ satisfies
\begin{equation*}\label{def.weak.whole}
\int_{\R^2}  u\cdot \mathcal{L}_{-\alpha} \phi \dd x 
- \int_{\R^2} p \, {\rm div}\, \phi \dd x  
\,=\,  \int_{\R^2} f \cdot \phi \dd x\,, \quad {\rm for~all} \ \ \phi\in \mathcal{S}(\R^2)^2\,,
\end{equation*}
where the operator $\mathcal{L}_{\alpha}$ is defined as
\begin{align*}
\mathcal{L}_{\alpha} u \,=\, -\Delta u -\alpha(x^\bot \cdot \nabla u -u^\bot)\,.
\end{align*}
The fundamental solution to \eqref{a} plays a central role throughout this paper, which is defined as 
\begin{equation}\label{def.fundamental.solution}
\Gamma_{\alpha}(x,y) \, = \,  \int_{0}^{\infty}  O(\alpha t)^{\top} K(O(\alpha t)x-y,t) \dd t\,,
\end{equation}
where
\begin{equation*}
K(x,t) \,=\, G(x,t) \mathbb{I} + H(x,t)\,,
\qquad  
H(x,t) \,=\, \int_{t}^{\infty} \nabla^2 G(x,s) \dd s\,,
\end{equation*}
and $G(x,t)$ is the two-dimensional Gauss kernel
\begin{equation*}
G(x,t) \,=\, \frac{1}{4\pi t} e^{-\frac{|x|^2}{4t}}\,.
\end{equation*}
The next theorem is the main result of this subsection, which extends the result of \cite{H3} to our functional setting. For $f\in L^2 (\R^2)^2$ and $F=(F_{ij})_{1\leq, i,j\leq 2} \in L^2 (\R^2)^{2\times 2}$ we formally set 
\begin{align}\label{def.c.tildec}
\begin{split}
c[f]  & \,=\, \lim_{\epsilon \rightarrow 0} \int_{\R^2} e^{-\epsilon |y|^2}  y^\bot \cdot f(y) \dd y\,,\\
\tilde c[F] & \,=\, \lim_{\epsilon \rightarrow 0} \int_{\R^2} e^{-\epsilon |y|^2}   \big( F_{12}(y) - F_{21}(y) \big) \dd y\,.
\end{split}
\end{align}
Note that if $f\in L^2 (\R^2)^2$ is of the form $f={\rm div}\, F=(\partial_1 F_{11}+\partial_2 F_{12}, \partial_1 F_{21}+\partial_2 F_{22})^\top$ with some $F\in L^1 (\R^2)^{2\times 2}$, then $c[f] = \tilde c[F]$. Indeed, from the integration by parts we have 
\begin{align*}
c[f] \,=\, 
\tilde c[F] 
+ \lim_{\epsilon \rightarrow 0} 2 \int_{\R^2} e^{-\epsilon |y|^2} \epsilon y^\bot  \cdot \big ( F (y)  y \big )  \dd y\,.
\end{align*} 
Then the Lebesgue dominated convergence theorem implies $c[f] = \tilde c[F]$. Moreover, if $F$ is symmetric then $\tilde c[F]=0$. Here and in what follows, $B_{R}$ denotes the open disk in $\R^2$ of radius $R>0$ and centered at the origin, and the complement of $B_{R}$ is denoted as $B^{{\rm c}}_{R} = \{ x \in \R^2 ~|~ |x| \ge R  \}$.
%
\begin{theorem}\label{thm.linear.whole}
Let $\alpha \in \R \setminus \{0\}$. We formally set
\begin{equation}
L [f] (x) \, = \, \lim_{\epsilon\rightarrow 0} \int_{\R^2} e^{-\epsilon |y|^2}\Gamma_{\alpha}(x,y) f(y) \dd y\,.
\end{equation}
Then the following statements hold.

\noindent 
{\rm (i)} Let $\gamma \in [0,1)$. 
Suppose that $f\in L^2 (\R^2)^2$ satisfies ${\rm supp}\, f \subset B_{R}$ for some $R\ge1$. 
Then $u=L [f]$ is a weak solution to \eqref{a} and is written as 
\begin{equation}\label{est.thm.linear.whole.1}
u(x) \, =\,  c[f] \frac{x^{\bot}}{4\pi |x|^2} + \mathcal{R}[f] (x) \,, \qquad x\ne 0\,,
\end{equation}
where $\mathcal{R}[f]$ satisfies 
\begin{equation}\label{est.thm.linear.whole.2}
\begin{split}
\| \mathcal{R}[f] \|_{L^{\infty}_{1+\gamma} (B^{{\rm c}}_{2R})}
\le 
C_1 \big( 
|\alpha|^{-\frac{1+\gamma}{2}} \| f \|_{L^1 (B_{R})} 
+ \| |y|^{1+\gamma} f\|_{L^{1}(B_{R})} \big)\,.
\end{split}
\end{equation}
Here $C_1$ is a numerical constant, and is independent of  $\gamma$, $\alpha$, $R$, and $f$.

\noindent {\rm (ii)} Let $\gamma\in [0,1)$. 
Suppose that $f\in L^2 (\R^2)^2$ is of the form $f={\rm div}\,F$ 
with some $F\in L^\infty_{2+\gamma} (\R^2)^{2\times 2}$, and in addition that $\tilde c[F]$ in \eqref{def.c.tildec} converges when $\gamma=0$. Then $u=L [f]$ is a weak solution to \eqref{a} and is written as
\begin{equation}\label{est.thm.linear.whole.3}
u(x) \, = \, \tilde c[F] \frac{x^{\bot}}{4\pi |x|^2}  + \mathcal{R}[f] (x) \,, \quad \quad x\ne 0\,,
\end{equation}
\noindent
where  $\mathcal{R}[f]$ satisfies for $R\ge1$,
\begin{equation}\label{est.thm.linear.whole.4}
\begin{split}
\| \mathcal{R}[f] \|_{L^{\infty}_{1+\gamma} (B^{{\rm c}}_{2R})}
& \leq 
C_2  \bigg(  \| F \|_{L^{\infty}_{2+\gamma} (B^{{\rm c}}_{R})}
 + \sup_{|x|\geq 2R} |x|^{-1+\gamma} \| y F \|_{L^1 (B_{\frac{|x|}{2}})} \\
& \quad  + \sup_{|x|\geq 2 R} \min \big \{ \frac{1}{|\alpha| |x|^{2-\gamma}}, |x|^\gamma \big \} \| F \|_{L^1 (B_{\frac{|x|}{2}})} \\
& \qquad + \sup_{|x|\geq 2R} |x|^\gamma \, 
\big| \lim_{\epsilon\rightarrow 0} \int_{2|y|\geq |x|}  e^{-\epsilon |y|^2} \big(F_{12}(y) - F_{21}(y) \big) \dd y \big|   
\bigg)\,.
\end{split}
\end{equation}
Here $C_2$ is a numerical constant, and is independent of $\gamma$, $\alpha$, $R$, and $f$.
\end{theorem}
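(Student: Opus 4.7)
The overall strategy is to exploit the explicit representation $u = L[f]$ by expanding the rotated heat-kernel factor $K(O(\alpha t)x - y, t)$ around $y = 0$ via the Taylor formula \eqref{formulation}, isolating the leading circular-flow profile $\tfrac{x^\bot}{4\pi|x|^2}$, and bounding the resulting remainder by combining the oscillatory integral estimates of Lemmas \ref{lem.thm.linear.whole.2} and \ref{lem.thm.linear.whole.3}. The $\tfrac{x^\bot}{|x|^2}$-structure is produced by time-averaging the rotation factor $O(\alpha t)^\top$ against the first-order Taylor term, while the prescribed coefficient $c[f]$ (or $\tilde c[F]$) appears as the first moment $\int y^\bot \cdot f \dd y$ (resp.\ $\int (F_{12}-F_{21}) \dd y$) of the data.

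For Part (i), where $f$ has compact support in $B_R$, I would first justify interchanging $\lim_{\epsilon\to 0}$ with the $t$- and $y$-integrations (dominated convergence suffices here, using the bounds of Lemma \ref{lem.thm.linear.whole.2} together with the compact support of $f$). Since on the region of interest $|x| \ge 2R \ge 2|y|$, I then apply \eqref{formulation} to the Gaussian in $K$ around $y=0$. The zeroth- and first-order Taylor terms, processed with $O(\alpha t)^\top$ and integrated in $t$, rearrange into $c[f]\tfrac{x^\bot}{4\pi|x|^2}$ plus a correction controlled by the oscillatory integrals of \eqref{est.lem.thm.linear.whole.2.1}; the quadratic Taylor remainder is controlled directly by \eqref{est.lem.thm.linear.whole.3.1}. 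These two contributions produce, respectively, the $|\alpha|^{-(1+\gamma)/2}\|f\|_{L^1}$ and $\||y|^{1+\gamma}f\|_{L^1}$ summands of \eqref{est.thm.linear.whole.2}.

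For Part (ii), where $f = \nabla \cdot F$, I would integrate by parts in $y$ so that the derivative falls on $\Gamma_\alpha$; the derivative of the cutoff $e^{-\epsilon|y|^2}$ produces in the limit precisely the coefficient $\tilde c[F]$ (here one uses $F \in L^\infty_{2+\gamma}$ and, for $\gamma = 0$, the standing assumption that $\tilde c[F]$ converges). I then split the $y$-integral at $|y|=|x|/2$. On the inner region $|y|\le|x|/2$ we are again in the regime $|x|\ge 2|y|$, and the Taylor-expansion scheme of Part (i) applied to $\nabla_y\Gamma_\alpha$ yields the leading $\tilde c[F]\tfrac{x^\bot}{4\pi|x|^2}$ together with a remainder bounded by the first three sup-norms in \eqref{est.thm.linear.whole.4}; in particular, the $\min$ structure in the third supremum arises from playing \eqref{est.lem.thm.linear.whole.3.2} against a trivial pointwise bound on the kernel. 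On the outer region $|y|\ge|x|/2$ the Taylor expansion is unavailable, so I keep the antisymmetric combination $F_{12}-F_{21}$ intact; its tail integral contributes the fourth supremum in \eqref{est.thm.linear.whole.4}, while the symmetric part of $F$ is absorbed using its pointwise $L^\infty_{2+\gamma}$-bound.

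The main obstacle is tracking the $\alpha$-dependence sharply. A naive use of the $\tfrac{1}{|\alpha|r^{2m}}$-bound in the oscillatory estimates would yield a $|\alpha|^{-1}$ singularity in the remainder, which is too weak to close the nonlinear iteration of Section \ref{sec.nonlinear}. The $\min$-bound of \eqref{est.lem.thm.linear.whole.2.1}, obtained by the optimized cutoff $l \sim r^{2m/(m+1)}|\alpha|^{-1/(m+1)}$ in Lemma \ref{lem.thm.linear.whole.2}, has to be deployed every time an $O(\alpha t)^\top$-factor is integrated against a Gaussian in time; it is precisely the interpolation between the two terms in this $\min$ that produces the exponent $(1+\gamma)/2$ on $|\alpha|$ in \eqref{est.thm.linear.whole.2} and \eqref{est.thm.linear.whole.4}. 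A secondary but delicate step is justifying that the $\epsilon \to 0$ limit commutes with all of the $t$- and $y$-manipulations, so that the cancellation defining $c[f]$ and $\tilde c[F]$ survives and the circular-flow coefficient is captured correctly.
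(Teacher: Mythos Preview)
Your strategy matches the paper's almost exactly; the paper simply packages the Taylor-expansion and oscillatory-integral work you describe into the pointwise kernel estimate of Lemma~\ref{lem.thm.linear.whole.1} (the bound on $\Gamma_\alpha(x,y)-L(x,y)$ and on $\nabla_y(\Gamma_\alpha-L)$ for $|x|>2|y|$), and then Theorem~\ref{thm.linear.whole} follows by one clean application of that lemma plus the interpolation inequality $\min\{|\alpha|^{-1}|x|^{\gamma-1},|x|^{1+\gamma}\}\le|\alpha|^{-(1+\gamma)/2}$.

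One bookkeeping point in your treatment of (ii) is slightly off. The antisymmetric tail term $\int_{2|y|\ge|x|}(F_{12}-F_{21})\,\dd y$ does \emph{not} arise from the outer-region integral of $\nabla_y\Gamma_\alpha\,F$; that outer integral is controlled in one stroke by the kernel bound $\int_0^\infty|(\nabla K)(O(\alpha t)z,t)|\,\dd t\le C|z|^{-1}$ together with $\|F\|_{L^\infty_{2+\gamma}(B^{\rm c}_{|x|/2})}$, with no symmetric/antisymmetric splitting needed. Rather, the tail appears because on the inner region you subtract and add $\nabla_y L$: since $-\nabla_y L(x,y)\,F=(F_{12}-F_{21})\tfrac{x^\bot}{4\pi|x|^2}$ exactly, the inner integral of $-\nabla_y L\,F$ equals $\bigl[\tilde c[F]-\lim_{\epsilon}\int_{2|y|\ge|x|}e^{-\epsilon|y|^2}(F_{12}-F_{21})\,\dd y\bigr]\tfrac{x^\bot}{4\pi|x|^2}$, and the second bracketed piece is what lands in $\mathcal{R}[f]$. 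This is only a relabeling of where the term sits in the decomposition, not a gap in the argument.
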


\begin{remark}\label{rem.thm.linear.whole} 
Under the assumptions of (i) or (ii) in Theorem \ref{thm.linear.whole} it is not difficult to see that $L[f]$ belongs to $W^{2,2}_{loc} (\R^2)$, and thus, $L[f]$ is bounded in $\R^2$ by the Sobolev embedding in $B_1$ and the estimates stated in Theorem \ref{thm.linear.whole} for $|x|\geq 1$ (by taking $R=1$). Set 
\begin{align}
p \,=\, \int_{\R^2} \frac{x-y}{2\pi |x-y|^2} f (y) \dd y\,.\label{def.pressure}
\end{align}
Then, $\nabla p$ belongs to $L^2 (\R^2)^2$ under the assumptions of (i) or (ii) in Theorem \ref{thm.linear.whole} by the Calder{\'o}n-Zygmund inequality, and as is shown in \cite[Proposition 3.2]{H3}, the pair $(L[f], \nabla p)$ satisfies \eqref{a} in the sense of distributions. In virtue of the uniqueness result stated in \cite[Lemma 3.5]{H3}, 
if $f$ satisfies one of the assumptions in Theorem \ref{thm.linear.whole}, and if $(v,q) \in \mathcal{S}'(\R^2)^2 \times \mathcal{S}'(\R^2)$ is a solution to \eqref{a} in the sense of distributions, then $(v,q)$ has a representation as $v=L[f]+P_{1}$ and $q=p+P_{2}$ with some polynomials $P_{1}$ and $P_{2}$. Hence, by the definition stated above, any weak solution $(u, p)$ to \eqref{a} is represented as $u=L[f]$ and $p$ is given by \eqref{def.pressure}, as long as the condition (i) or (ii) on $f$ in Theorem \ref{thm.linear.whole} is assumed.
\end{remark}

We note that in (ii) of Theorem \ref{thm.linear.whole} the coefficient $\tilde c[F]$ is always well-defined when $\gamma>0$. The asymptotic expansion \eqref{est.thm.linear.whole.1} for the case (i)  is firstly established by \cite[Proposition 3.2]{H3}.
Indeed, for the case (i) it is shown in \cite[Proposition 3.2]{H3} that
 $\mathcal{R}[f]$ decays at infinity as $O(|x|^{-2})$,
while the singularity $|\alpha|^{-1}$ appears in the coefficient of the estimates there.
\noindent The novelty of Theorem \ref{thm.linear.whole} are \eqref{est.thm.linear.whole.2} and \eqref{est.thm.linear.whole.4},
where both the consistency in the weighted $L^\infty$ spaces and the milder singularity on $\alpha$ for small $|\alpha|$ are essential to solve the nonlinear problem in Section \ref{sec.nonlinear}. 
On the other hand, as in \cite{H3}, the key step to prove Theorem \ref{thm.linear.whole} is the expansion and 
the pointwise estimate of the fundamental solution $\Gamma_\alpha (x,y)$, which are stated in Lemma \ref{lem.thm.linear.whole.1} below. The fundamental solution $\Gamma_\alpha (x,y)$ is studied in details in \cite[Proposition 3.1]{H3} and we will revisit the argument developed by \cite{H3} in the proof of this lemma.

\begin{lemma}\label{lem.thm.linear.whole.1} Set
\begin{equation}\label{def.Lxy}
L(x,y) \, = \, \frac{x^{\perp}\otimes y^{\perp}}{4\pi |x|^2}\,. 
\end{equation}

\noindent Then for $m=0,1$ the kernel $\Gamma_{\alpha}(x,y)$ satisfies
\begin{equation}\label{est.lem.thm.linear.whole.1.1}
\begin{split}
& | \nabla_y ^m \big ( \Gamma_{\alpha}(x,y)- L(x,y) \big ) | \\
& \le
C \bigg ( \delta_{0m} \min \big\{ \frac{1}{|\alpha| |x|^2}, \frac{1}{|\alpha|^{\frac{1}{2}} |x|} \big\}
+  |x|^{1-m} 
\min \big\{ \frac{1}{|\alpha| |x|^3}, \frac{1}{|x|} \big\} 
+  \frac{|y|^{2-m}} {|x|^2}\bigg )\,,\\
&  \quad\quad\quad\quad\quad \quad\quad\quad\quad\quad\quad \quad\quad \quad\quad\quad \quad\quad\quad\quad\quad  {\rm for}   \quad |x|> 2 |y|\,.
\end{split}
\end{equation}

\noindent
Here $\delta_{0m}$ is the Kronecker delta and $C$ is independent of $x$, $y$, and $\alpha$.

\end{lemma}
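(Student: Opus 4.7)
The plan is to revisit the analysis of \cite[Proposition 3.1]{H3} by Taylor expanding $K(O(\alpha t) x - y, t)$ in $y$ around $y=0$, but with the sharp oscillatory integral bounds of Lemma \ref{lem.thm.linear.whole.2} and Lemma \ref{lem.thm.linear.whole.3} replacing the cruder estimates used there. This is what yields the milder $|\alpha|$-dependence recorded in \eqref{est.lem.thm.linear.whole.1.1}. Concretely, expanding to second order,
\begin{align*}
K(O(\alpha t) x - y, t) \,=\, K(O(\alpha t) x, t) \,-\, (\nabla_z K)(O(\alpha t) x, t)\, y \,+\, R_2(x, y, t),
\end{align*}
where $R_2$ is the quadratic remainder involving $(\nabla_z^2 K)(O(\alpha t)x - \theta y, t)$ with $\theta = \theta(\alpha,t,x,y) \in (0,1)$. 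Substitution into \eqref{def.fundamental.solution} decomposes $\Gamma_\alpha(x,y) = A_0(x) + A_1(x,y) + A_2(x,y)$, and as in \cite{H3} the leading profile $L(x,y)$ is to be extracted from the non-oscillatory component of $A_1$.

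For $A_0(x)$, the rotational invariance $|O(\alpha t) x| = |x|$ reduces each scalar entry to an oscillatory $t$-integral against $e^{-|x|^2/(4t)}$. The $G\mathbb{I}$-contribution is bounded directly by Lemma \ref{lem.thm.linear.whole.2} with $m=1$, yielding $C\min\{\frac{1}{|\alpha||x|^2},\frac{1}{|\alpha|^{1/2}|x|}\}$, which is precisely the $\delta_{0m}$-term of \eqref{est.lem.thm.linear.whole.1.1}. For the $H$-contribution, the identity $O(\alpha t)^\top (O(\alpha t)x \otimes O(\alpha t)x) = (x\otimes x)\, O(\alpha t)^\top$ pulls the spatial factor out of the rotation, and \eqref{est.lem.thm.linear.whole.3.3} applied with $m=2$ and $m=1$ then gives $C\min\{\frac{1}{|\alpha||x|^2},1\} = C|x|\min\{\frac{1}{|\alpha||x|^3},\frac{1}{|x|}\}$, matching the second term for $m=0$. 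For the difference $A_1(x,y) - L(x,y)$, the same algebraic identities together with the elementary formula \eqref{est.lem.thm.linear.whole.2.2} isolate $L(x,y) = \frac{x^\perp \otimes y^\perp}{4\pi|x|^2}$ as the non-oscillatory part; the residual oscillatory pieces are controlled by Lemma \ref{lem.thm.linear.whole.2} (for $G$) and Lemma \ref{lem.thm.linear.whole.3} (for $H$) by $C|y|\min\{\frac{1}{|\alpha||x|^3},\frac{1}{|x|}\}$, which is absorbed into the $|x|^{1-m}\min$-term since $|x| > 2|y|$.

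Finally, $A_2$ is handled directly: the hypothesis $|x| > 2|y|$ gives $|O(\alpha t)x - \theta y| \ge |x|/2$, so $|\nabla_z^2 K(O(\alpha t)x - \theta y, t)|$ is bounded pointwise by Gaussian-polynomial factors times $e^{-|x|^2/(16t)}$; integration via \eqref{est.lem.thm.linear.whole.2.2} (no oscillation is needed here, as the integrand is not $\alpha$-dependent) yields $|A_2(x,y)| \le C|y|^2/|x|^2$, the third term of \eqref{est.lem.thm.linear.whole.1.1} for $m=0$. The $m=1$ case follows by differentiating the decomposition once in $y$: this kills $A_0$, leaves $\nabla_y L$ plus an analogous oscillatory remainder from $A_1$ that is controlled by exactly the same arguments, and drops one power of $|y|$ from the bound on $A_2$. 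The hardest step is the algebraic extraction of $L$ from $A_1$: the sharper $\min$-structure in Lemma \ref{lem.thm.linear.whole.2} must be applied carefully at each scalar entry of $\nabla_z K = \nabla_z G\,\mathbb{I} + \nabla_z H$, and the cancellations from $O(\alpha t)^\top O(\alpha t) = \mathbb{I}$ must be tracked so that the non-oscillatory residue is exactly $\frac{x^\perp\otimes y^\perp}{4\pi |x|^2}$ rather than an additional rank-one matrix contribution. It is precisely this bookkeeping, combined with the interpolation in Lemma \ref{lem.thm.linear.whole.2} near the critical scale $|\alpha||x|^2 \sim 1$, that replaces the worse bound in \cite{H3} and produces the improved $|\alpha|$-dependence needed to close the nonlinear estimates in Section \ref{sec.nonlinear}.
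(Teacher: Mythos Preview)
Your proposal is correct and follows essentially the same strategy as the paper: Taylor expand the kernel in $y$, separate the oscillatory from the non-oscillatory contributions via the trigonometric identities arising from $O(\alpha t)^\top$, and apply Lemmas \ref{lem.thm.linear.whole.2}--\ref{lem.thm.linear.whole.3} to the oscillatory pieces. The only organizational difference is that the paper first splits $K=G\mathbb{I}+H$ into the three pieces $\Gamma_\alpha^0,\Gamma_\alpha^{11},\Gamma_\alpha^{12}$ and Taylor expands the Gaussian factor in each (correspondingly decomposing $L=L^0+L^{111}+L^{112}+L^{12}$), whereas you expand $K$ as a whole; note also that the paper treats $m=1$ by differentiating $\Gamma_\alpha$ first and then redoing the expansion (a fair amount of work, \eqref{proof.lem.thm.linear.whole.1.81}--\eqref{proof.lem.thm.linear.whole.1.24}), which avoids the issue of differentiating a Lagrange-form remainder through the implicit $\theta=\theta(y)$.
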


\begin{remark} The case $m=0$ of \eqref{est.lem.thm.linear.whole.1.1} is obtained in \cite[Proposition 3.1]{H3} but with $|\alpha|^{-1}$ dependence of the coefficients in the estimate. The case $m=1$ is not stated explicitly in \cite{H3}, 
although it can be handled in the similar spirit as in the case $m=0$. In this sense Lemma \ref{lem.thm.linear.whole.1} is not completely new, and is an improvement of \cite[Proposition 3.1]{H3} with respect to the singularity on $|\alpha|$ 
for $|\alpha|\ll 1$.
\end{remark}

\begin{proofx}{Lemma \ref{lem.thm.linear.whole.1}}
In principle, our proof of Lemma \ref{lem.thm.linear.whole.1} will proceed
along the line of  \cite[Proposition 3.1]{H3}.
In fact, the only key difference of out proof for the case $m=0$ is the application of Lemmas \ref{lem.thm.linear.whole.2}, \ref{lem.thm.linear.whole.3} in  suitable parts. 
In the proof for the case $m=1$, the inequality \eqref{est.lem.thm.linear.whole.3.1} will be essentially used in addition. 

Following the argument of \cite[Section 3]{H3}, we decompose $\Gamma_\alpha (x,y)$ and define $\Gamma_{\alpha}^0(x,y)$, $\Gamma_{\alpha}^{11}(x,y)$, and $\Gamma_{\alpha}^{12}(x,y)$ as
\begin{equation}\label{proof.lem.thm.linear.whole.1.1}
\begin{split}
& ~~~ \Gamma_{\alpha}(x,y) \\
& \,=\, \Gamma_{\alpha}^0(x,y)+\Gamma_{\alpha}^{11}(x,y)+\Gamma_{\alpha}^{12}(x,y) \\
& \,=\, \int^{\infty}_0 O(\alpha t)^{\top} G(O(\alpha t)x-y,t) \dd t \\
&\quad + \int^{\infty}_0 O(\alpha t)^{\top}  (O(\alpha t)x-y)\otimes(O(\alpha t)x-y) \int^{\infty}_t G(O(\alpha t)x-y,s) \frac{\dd s}{4s^2} \dd t \\
&\quad - \int^{\infty}_0 O(\alpha t)^{\top} \int^{\infty}_t G(O(\alpha t)x-y,s)\frac{\dd s}{2s} \dd t\,.
\end{split}
\end{equation}
We also  decompose $L(x,y)$ and define $L^0(x,y)$, $L^{111}(x,y)$, $L^{112}(x,y)$, and $L^{122}(x,y)$ as 
\begin{equation}\label{proof.lem.thm.linear.whole.1.2}
\begin{split}
L(x,y)
& \,=\,  L^0(x,y) + L^{111}(x,y) + L^{112}(x,y) + L^{12}(x,y) \\ 
&\,=\, 
\frac{x \otimes y + x^\bot \otimes y^\bot}{4\pi |x|^{2}}
+ \frac{-3(x \otimes y) + x^\bot \otimes y^\bot}{8\pi |x|^{2}} 
+ \frac{x \otimes y}{4\pi |x|^{2}}
- \frac{x \otimes y + x^\bot \otimes y^\bot}{8\pi |x|^{2}} \,.
\end{split}
\end{equation}
Then, by Lemma \ref{lem.thm.linear.whole.2} the following representations hold:
\begin{equation}\label{proof.lem.thm.linear.whole.1.3}
\begin{split}
& L^0(x,y) \, = \, 
\int^{\infty}_0 G(x,t) \frac{\dd t}{4 t} 
\begin{pmatrix}
x\cdot y  & x^{\bot}\cdot y \\
-x^{\bot}\cdot y & x\cdot y \\
\end{pmatrix}\,, \\
& L^{111}(x,y) \,= \, 
\int_{0}^{\infty} \int^{\infty}_t G(x,s)  \frac{\dd s}{4s^2} \dd t
\, \bigg( \frac{-3(x\otimes y) +(x^{\perp}\otimes y^{\perp})}{2} \bigg)\,, \\
& L^{112}(x,y) \, = \, 
 \int^{\infty}_0 \int^{\infty}_t G(x,s) \frac{\dd s}{16s^3} \dd t
\, |x|^2 (x\otimes y)\,, \\
& L^{12}(x,y) \, =\, 
- \int^{\infty}_0 \int^{\infty}_t G(x,s) \frac{\dd s}{8s^2} \dd t
\begin{pmatrix}
x\cdot y & x^{\bot} \cdot y \\
-x^{\bot} \cdot y & x\cdot y \\
\end{pmatrix}\,, \\
\end{split}
\end{equation}
where we have used the equality
\begin{equation*}
x \otimes y + x^\bot \otimes y^\bot
\, = \,
\begin{pmatrix}
x\cdot y  & x^{\bot}\cdot y \\
-x^{\bot}\cdot y & x\cdot y \\
\end{pmatrix}\,.
\end{equation*}
To prove \eqref{est.lem.thm.linear.whole.1.1}  we observe that 
\begin{equation*}
\begin{split}
& | \nabla_y^m \big (\Gamma_{\alpha}(x,y)-L(x,y) \big ) | \\
& \leq | \nabla_y^m \big ( \Gamma_{\alpha}^0(x,y)-L^0(x,y)\big )  | 
+ | \nabla_y^m \big ( \Gamma_{\alpha}^{11}(x,y)-L^{111}(x,y)-L^{112}(x,y) \big )  | \\
& ~~~ + |\nabla_y^m \big (  \Gamma_{\alpha}^{12}(x,y)-L^{12}(x,y) \big )|\,. 
\end{split}
\end{equation*}
Let us estimate each term in the right-hand side of the above inequality. The key idea is to use the Taylor formula for $G(O(\alpha t) x -y, t')$ around $y=0$ as follows.
\begin{equation}\label{proof.lem.thm.linear.whole.1.4}
\begin{split}
G(O(\alpha t) x -y, t') & \, = \,  G(x,t')  +  \frac{\langle O(\alpha t)x, y\rangle}{2t'} G(x,t')   + \frac{\langle y, Q y\rangle}{8{t'}^2} G(O(\alpha t) x - \theta y, t') \, \,,
\end{split}
\end{equation}
where 
\begin{align*}
Q \,=\, Q(x,\theta y,\alpha t, t')  \, = \, (O(\alpha t)x-\theta y)\otimes (O(\alpha t)x-\theta y)-2t' \mathbb{I}\,,
\end{align*}
and $\theta = \theta(\alpha,t',x,y) \in (0,1)$. To estimate $\Gamma_\alpha^0 (x,y) - L^{0}(x,y)$ we use the identity
\begin{equation}\label{proof.lem.thm.linear.whole.1.5}
\begin{split}
O(\alpha t)^{\top} \langle O(\alpha t)x, y \rangle
&\, = \,  
\frac{1}{2}
\begin{pmatrix}
x\cdot y  & x^{\bot}\cdot y \\
-x^{\bot}\cdot y & x\cdot y \\
\end{pmatrix} \\
& \quad
+ \frac{\cos{2\alpha t}}{2}
\begin{pmatrix}
x\cdot y  & -x^{\bot}\cdot y \\
x^{\bot}\cdot y & x\cdot y \\
\end{pmatrix} 
+ \frac{\sin{2\alpha t}}{2}
\begin{pmatrix}
x^{\bot}\cdot y  & x\cdot y \\
-x\cdot y & x^{\bot}\cdot y \\
\end{pmatrix}\,.
\end{split}
\end{equation}
Let $|x|>2 |y|$. Then we have from \eqref{proof.lem.thm.linear.whole.1.4} and \eqref{proof.lem.thm.linear.whole.1.5},
\begin{align}
| \Gamma_{\alpha}^0(x,y)-L^0(x,y) | 
& \, = \, 
\bigg| 
\int_{0}^{\infty} O(\alpha t)^{\top} G(x,t) \dd t \nonumber \\
& \quad
+ \int_{0}^{\infty} 
\frac{1}{2 t} 
\bigg(
O(\alpha t)^{\top} \langle O(\alpha t)x,  y \rangle
-\frac{1}{2}
\begin{pmatrix}
x\cdot y  & x^{\bot}\cdot y \\
-x^{\bot}\cdot y & x\cdot y \\
\end{pmatrix}
\bigg)
G(x,t) \dd t \nonumber \\
& \qquad
+ \int_{0}^{\infty} 
O(\alpha t)^{\top} \frac{\langle y, Q y\rangle}{8t^2} G(O(\alpha t) x - \theta y, t)\dd t 
\bigg| \nonumber  \\
\begin{split}
& \le
\bigg| 
\int_{0}^{\infty} 
O(\alpha t)^{\top} G(x,t) \dd t 
\bigg| 
+ C |x| |y| \min \big\{ \frac{1}{|\alpha| |x|^4}, \frac{1}{|x|^2} \big\} \\
& \quad
+ C |y|^2
\int_{0}^{\infty} 
\big\{ (|x|^2 + |x| |y| + |y|^2)t^{-3} + t^{-2} \big\}
e^{-\frac{|x|^2}{16t}} \dd t \,.\label{proof.lem.thm.linear.whole.1.6}
\end{split}
\end{align}
Here we have used \eqref{est.lem.thm.linear.whole.3.2} for the second term and used the condition $|x|>2|y|$ for the third term to achieve the last line. Clearly the last term in the right-hand side of \eqref{proof.lem.thm.linear.whole.1.6} is bounded from above by $C \frac{|y|^2}{ |x|^{2}}$ for $|x|>2|y|$, while in virtue of \eqref{est.lem.thm.linear.whole.2.1} the first term is estimated as
\begin{equation}\label{proof.lem.thm.linear.whole.1.7}
\bigg| 
\displaystyle \int_{0}^{\infty} 
O(\alpha t)^{\top} G(x,t) \dd t 
\bigg| 
\le
C \min \big\{ \frac{1}{|\alpha| |x|^2}, \frac{1}{|\alpha|^{\frac{1}{2}} |x|} \big\}\,, \qquad |x| > 0 \,.
\end{equation}
Thus we have arrived at
\begin{align}
&| \Gamma_{\alpha}^0(x,y)-L^0(x,y) | \nonumber \\ 
\begin{split}
& \le C \bigg ( \min \big\{ \frac{1}{|\alpha| |x|^2}, \frac{1}{|\alpha|^{\frac{1}{2}} |x|} \big\}
+  |y| \min \big\{ \frac{1}{|\alpha| |x|^3}, \frac{1}{|x|} \big\} 
+  \frac{|y|^2} {|x|^2}\bigg )\,,  \qquad  |x| > 2 |y|\,.
\label{proof.lem.thm.linear.whole.1.8}
\end{split}
\end{align}
Next we consider the derivative estimate for $\Gamma_\alpha^0 (x,y)-L^0 (x,y)$. Let us go back to the definition of $\Gamma_\alpha^0 (x,y)$ in \eqref{proof.lem.thm.linear.whole.1.1}. Then  $\partial_{y_k} \big (\Gamma_\alpha^0 (x,y)-L^0 (x,y)\big )$ is computed as
\begin{align}
&~~~ \big|\partial_{y_k}(\Gamma_{\alpha}^0(x,y)-L^0(x,y)) \big| \nonumber \\ 
& = \bigg| \int_{0}^{\infty} 
\bigg(
\frac{ O(\alpha t)^{\top} (O(\alpha t)x-y)_k }{2 t} 
G(O(\alpha t)x-y, t)
-\frac{1}{4t}
\partial_{y_k}
\begin{pmatrix}
x\cdot y  & x^{\bot}\cdot y \\
-x^{\bot}\cdot y & x\cdot y \\
\end{pmatrix}
G(x,t) \bigg) \dd t
\bigg|  \nonumber \\
\begin{split}\label{proof.lem.thm.linear.whole.1.81}
& \leq 
\bigg| \int_{0}^{\infty} \frac{ O(\alpha t)^{\top} (O(\alpha t)x-y)_k }{2 t} 
\bigg ( G(O(\alpha t)x-y, t) - G(x,t) \bigg ) \dd t \bigg | \\
& \quad +  \bigg | \int_{0}^{\infty} 
\bigg(
O(\alpha t)^{\top} (O(\alpha t)x-y)_k
- \frac{1}{2}
\partial_{y_k}
\begin{pmatrix}
x\cdot y  & x^{\bot}\cdot y \\
-x^{\bot}\cdot y & x\cdot y \\
\end{pmatrix}
\bigg) G(x,t) \frac{\dd t}{2t} \bigg| \,.
\end{split}
\end{align}
By applying \eqref{est.lem.thm.linear.whole.3.1} the first term is bounded from above by $C \frac{(|x|+|y|)|y|}{|x|^3}$. To estimate the second term we observe that
\begin{equation}\label{proof.lem.thm.linear.whole.1.82}
\begin{split}
&~~~ O(\alpha t)^{\top} (O(\alpha t)x-y)_k
- \frac{1}{2}
\partial_{y_k}
\begin{pmatrix}
x\cdot y  & x^{\bot}\cdot y \\
-x^{\bot}\cdot y & x\cdot y \\
\end{pmatrix} \\
& \,=\,
\begin{cases}
\displaystyle 
\frac{\cos{2\alpha t} }{2}
\begin{pmatrix}
x_1 & x_2 \\
-x_2 &x_1 
\end{pmatrix} 
+ \frac{ \sin{2\alpha t} }{2}
\begin{pmatrix}
-x_2  &x_1 \\
-x_1 &-x_2 
\end{pmatrix} 
- y_1 O(\alpha t)^{\top}\,, 
\quad \mathrm{if} \ k\,=\,1\,, \\  
\displaystyle
\frac{\cos{2\alpha t}}{2}
\begin{pmatrix}
x_2  &-x_1 \\
x_1 &x_2
\end{pmatrix} 
+ \frac{\sin{2\alpha t}}{2}
\begin{pmatrix}
x_1 \ &x_2 \\
-x_2 &x_1 
\end{pmatrix} 
-y_2 O(\alpha t)^{\top}\,,
\quad \mathrm{if} \ k \,=\, 2\,,
\end{cases} 
\end{split}
\end{equation}
Then, by using \eqref{est.lem.thm.linear.whole.3.2} the second term in the right-hand side of \eqref{proof.lem.thm.linear.whole.1.81} is bounded from above by 
$C(|x|+|y|) \min\{ \frac{1}{|\alpha| |x|^4}, \frac{1}{|x|^2} \}$. 
Hence we have shown that
\begin{align}
\big |\partial_{y_k} (\Gamma_{\alpha}^0(x,y)-L^0(x,y)) \big |
& \leq C \bigg ( \frac{ |y| }{ |x|^2 }
+ \min \big\{ \frac{1}{ |\alpha| |x|^{3} }, \frac{1}{ |x| } \big \} \bigg) \,,  
\qquad |x| > 2|y|\,.\label{proof.lem.thm.linear.whole.1.83}
\end{align}
Exactly in the same way we obtain for $m=0,1$ and $|x|>2 |y|$,
\begin{equation}\label{proof.lem.thm.linear.whole.1.9}
\begin{split}
& | \nabla_y^m \big ( \Gamma_{\alpha}^{12}(x,y)-L^{12}(x,y) \big )  | \\
& \le
C \bigg (  \delta_{0m}\min \big\{ \frac{1}{|\alpha| |x|^2}, \frac{1}{|\alpha|^{\frac{1}{2}} |x|} \big\}
+  |y|^{1-m} \min \big\{ \frac{1}{|\alpha| |x|^3}, \frac{1}{|x|} \big\} 
+  \frac{|y|^{2-m}} {|x|^2}\bigg )\,.
\end{split}
\end{equation}
Next we estimate the term 
$|\Gamma_{\alpha}^{11}(x,y)-L^{111}(x,y)-L^{112}(x,y)|$.
By the Taylor expansion stated in \eqref{proof.lem.thm.linear.whole.1.4}, 
we decompose $\Gamma_{\alpha}^{11}(x,y)$ and define $\Gamma_{\alpha}^{111}(x,y)$, $\Gamma_{\alpha}^{112}(x,y)$, and $\Gamma_{\alpha}^{113}(x,y)$ as
\begin{equation*}
\begin{split}
&~~~ \Gamma_{\alpha}^{11}(x,y) \\
& \,=\, \Gamma_{\alpha}^{111}(x,y) + \Gamma_{\alpha}^{112}(x,y)+ \Gamma_{\alpha}^{113}(x,y) \\
& \,=\, \int_{0}^{\infty}  O(\alpha t)^{\top} (O(\alpha t)x-y)\otimes(O(\alpha t)x-y) 
\int_{t}^{\infty} G(x,s) \frac{\dd s}{4 s^2} \dd t \\
& \quad + \int_{0}^{\infty}  O(\alpha t)^{\top} (O(\alpha t)x-y)\otimes(O(\alpha t)x-y) 
\int_{t}^{\infty} \langle O(\alpha t)x, y\rangle  G(x,s) \frac{\dd s}{8 s^3} \dd t \\
& \quad +  \int_{0}^{\infty} O(\alpha t)^{\top} (O(\alpha t)x-y)\otimes(O(\alpha t)x-y)  
\int_{t}^{\infty} \langle y, Q y \rangle G(O(\alpha t)x-\theta y,s) \frac{\dd s}{32 s^4} \dd t\,.
\end{split}
\end{equation*}
For the last term $\Gamma_{\alpha}^{113}(x,y)$ it is straightforward to see from \eqref{est.lem.thm.linear.whole.2.2} that, for $|x|> 2 |y|$,
\begin{align}
|\Gamma_{\alpha}^{113}(x,y)|  \le
C |y|^2 ( |x| + |y| )^2 \int_{0}^{\infty}  \int_{t}^{\infty} (|x|^2+|y|^2 + s ) e^{-\frac{ |x|^2 }{16s}} 
\frac{\dd s}{s^5} \dd t & \le C \frac{|y|^2}{|x|^2}\,.\label{proof.lem.thm.linear.whole.1.10}
\end{align}
To estimate the first two terms we observe
\begin{equation}\label{proof.lem.thm.linear.whole.1.11}
\begin{split}
&~~~ O(\alpha t)^{\top} (O(\alpha t)x-y)\otimes(O(\alpha t)x-y) \\
& \, =\,  A_0  + (\cos{\alpha t}) A_1 + (\sin{\alpha t})A_2 + \frac{\cos{2\alpha t}}{2} A_3 
+ \frac{\sin{2\alpha t}}{2}A_4\,,
\end{split}
\end{equation}
where
\begin{equation*}
\begin{split}
&
A_0(x,y) 
\,=\, \frac{-3(x\otimes y) + (x^{\perp} \otimes y^{\perp})}{2}\,, 
\qquad
A_1 (x,y) 
\,=\, 
\begin{pmatrix}
x^2_1+y^2_1   &x_1x_2+y_1y_2 \\
x_1 x_2+y_1y_2 &x^2_2+y^2_2 
\end{pmatrix}\,, \\
&
A_2(x,y)
\,=\,
\begin{pmatrix}
-x_1x_2+y_1y_2   &x^2_1+y^2_2 \\
-(x^2_2+y^2_1)  &x_1x_2-y_1y_2
\end{pmatrix}\,, 
\qquad
A_3 (x,y)
\,=\,
\begin{pmatrix}
-x\cdot y   &x^{\perp}\cdot y \\
-x^{\perp}\cdot y  &-x\cdot y 
\end{pmatrix}\,, \\
&
A_4 (x,y)
\,=\,
\begin{pmatrix}
-x^{\perp}\cdot y   &-x\cdot y\ \\
x\cdot y  &-x^{\perp}\cdot y
\end{pmatrix}\,.
\end{split}
\end{equation*}
Then, by using  \eqref{proof.lem.thm.linear.whole.1.11} and by applying \eqref{est.lem.thm.linear.whole.2.1} the term $\Gamma_\alpha^{111}(x,y)$ is estimated as 
\begin{align}
& \big| \Gamma_\alpha^{111} (x,y) - L^{111}(x,y) \big|  \nonumber \\
& = \bigg|  \int_{0}^{\infty}  \int_{t}^{\infty}
\bigg(
(\cos{\alpha t}) A_1 
+ (\sin{\alpha t})A_2 + \frac{\cos{2\alpha t}}{2}A_3 
+ \frac{\sin{2\alpha t}}{2} A_4
\bigg) G(x,s)  \frac{\dd s}{4 s^2} \dd t 
\bigg|  \nonumber \\
& \leq 
|x| \min \big\{ \frac{1}{|\alpha| |x|^3}, \frac{1}{|x|} \big\} \,, 
\qquad |x| > 2 |y|\,.
\label{proof.lem.thm.linear.whole.1.13}
\end{align}
Next we see 
\begin{align}\label{proof.lem.thm.linear.whole.1.12}
\begin{split}
&~~~ \langle O(\alpha t) x, y \rangle \, O(\alpha t)^{\top} (O(\alpha t)x-y)\otimes(O(\alpha t)x-y) \\
& \,=\,  \frac{|x|^2}{2} x\otimes y + (\cos 2\alpha t)B_1(x,y) +  (\sin 2\alpha t)B_2(x,y) + B_3 (x,y,\alpha t)\,,
\end{split}
\end{align}
where each component of the matrices $B_1$ and $B_2$ is a fourth order polynomial of $x,y$ written as a suitable sum of  the terms $x_1^{l_1} x_2^{l_2} y_1^{k_1} y_2^{k_2}$ with $l_1+l_2=3$ and $k_1+k_2=1$, while $B_3$ is estimated as $|B_3|\leq C|x|^2|y|^2$ for $|x|>2|y|$. Thus we have from \eqref{proof.lem.thm.linear.whole.1.12} and \eqref{est.lem.thm.linear.whole.2.1},
\begin{align}
&~~~ \big| \Gamma_\alpha^{112} (x,y) - L^{112}(x,y) \big| \nonumber \\
& \leq  \bigg|  \int_{0}^{\infty}  \int_{t}^{\infty}
\bigg( (\cos 2\alpha t)B_1(x,y) +  (\sin 2\alpha t)B_2(x,y) \bigg) G(x,s)  \frac{\dd s}{8 s^3} \dd t 
\bigg| \nonumber \\
& ~~~ + C |x|^2 |y|^2 \int_{0}^{\infty}  \int_{t}^{\infty}  G(x,s) \frac{\dd s}{s^3} \dd t \nonumber \\
& \leq 
C \bigg ( |x| \min \big\{ \frac{1}{|\alpha| |x|^3}, \frac{1}{|x|} \big\} 
+ \frac{|y|^2}{|x|^2} \bigg )\,, \qquad |x| > 2 |y|\,.
\label{proof.lem.thm.linear.whole.1.14}
\end{align}
Summing up \eqref{proof.lem.thm.linear.whole.1.10}, \eqref{proof.lem.thm.linear.whole.1.13}, and \eqref{proof.lem.thm.linear.whole.1.14}, we obtain
\begin{equation}\label{proof.lem.thm.linear.whole.1.15}
\begin{split}
&~~~ \big| \Gamma_{\alpha}^{11}(x,y)-L^{111}(x,y)-L^{112}(x,y) \big| \\
& \le
C \bigg( \min \big\{ \frac{1}{|\alpha| |x|^2}, \frac{1}{|\alpha|^{\frac{1}{2}} |x|} \big\}
+ 
|x| \min \big\{ \frac{1}{|\alpha| |x|^3}, \frac{1}{|x|} \big\} 
+  \frac{|y|^2} {|x|^2} \bigg )\,,  \quad |x| > 2 |y|\,. 
\end{split}
\end{equation}
To estimate the derivatives in $y$ of $\Gamma_\alpha^{11}(x,y)$ we recall
the definition of $\Gamma_\alpha^{11} (x,y)$ in \eqref{proof.lem.thm.linear.whole.1.1}
and use \eqref{proof.lem.thm.linear.whole.1.11}, which leads to the representation
\begin{equation}\label{proof.lem.thm.linear.whole.1.16}
\begin{split}
&~~~ \Gamma_{\alpha}^{11}(x,y) \\
&\,=\, 
\int_0^\infty \int_t^\infty A_0 \, G( O(\alpha t) x-y, s) \frac{\dd s}{4 s^2} \dd t \\
& + \int_{0}^{\infty}  \int_{t}^{\infty}
\bigg ( (\cos{\alpha t}) A_1 + (\sin{\alpha t})A_2 + \frac{\cos{2\alpha t}}{2} A_3  + \frac{\sin{2\alpha t}}{2} A_4 
\bigg)  G ( O(\alpha t) x-y, s) \frac{\dd s}{4 s^2} \dd t\\
& \,=\, \tilde \Gamma_\alpha^{111}  (x,y) + \tilde \Gamma_\alpha^{112} (x,y)\,.
\end{split}
\end{equation}
From the expression of $L^{111}(x,y)$ in \eqref{proof.lem.thm.linear.whole.1.3}, we have for $|x|> 2 |y|$,
\begin{align}
&~~~\big| \partial_{y_k} \big ( \tilde \Gamma_\alpha^{111}  (x,y)  - L^{111} (x,y) \big ) \big| \nonumber \\
& \, = \, \bigg |  \int^{\infty}_0 \ \int^{\infty}_t  \big ( \partial_{y_k} A_0 \big ) \bigg ( G ( O(\alpha t) x -y, s) - G(x,s) \bigg ) \frac{\dd s}{4 s^2} \dd t \nonumber \\
& \quad 
+ \int^{\infty}_0 \ \int^{\infty}_t  (O(\alpha t)x-y)_{k}  \, A_0 \, 
\bigg( G ( O(\alpha t) x -y, s) - G(x,s) \bigg ) \frac{\dd s}{8 s^3} \dd t  \nonumber \\
& \qquad 
+ \int^{\infty}_0 \ \int^{\infty}_t  (O(\alpha t)x-y)_{k} \, A_0 \,  G(x,s)  \frac{\dd s}{8 s^3} \dd t  \bigg | \nonumber \\
& \le C \bigg ( \frac{ |x| |y| }{ |x|^3 } + \frac{ (|x|^2 |y| + |x| |y|^2) |y| }{|x|^5}  + \frac{ (|x|^2 |y| + |x| |y|^2) }{|x|^4}
\bigg )  
\le C \frac{ |y| }{ |x|^2 }\,.\label{proof.lem.thm.linear.whole.1.17}
\end{align}
Here we have used \eqref{est.lem.thm.linear.whole.3.1}. Next we estimate the derivatives of $\tilde \Gamma_\alpha^{112} (x,y)$, which are computed as 
\begin{align}
&~~~ \partial_{y_k} \tilde \Gamma_\alpha^{112} (x,y) \nonumber \\
& \,=\, \int_{0}^{\infty}  \int_{t}^{\infty}
\bigg ( (\cos{\alpha t}) \partial_{y_k} A_1 + (\sin{\alpha t}) \partial_{y_k} A_2 + \frac{\cos{2\alpha t}}{2} \partial_{y_k} A_3  + \frac{\sin{2\alpha t}}{2} \partial_{y_k} A_4  \bigg)  \nonumber \\
& \quad \quad \quad \times G ( O(\alpha t) x-y, s) \frac{\dd s}{4 s^2} \dd t \nonumber \\
& \quad + \int_{0}^{\infty}  \int_{t}^{\infty} (O(\alpha t)x-y)_{k} 
\bigg ( (\cos{\alpha t}) A_1 + (\sin{\alpha t}) A_2 + \frac{\cos{2\alpha t}}{2} A_3  + \frac{\sin{2\alpha t}}{2}  A_4  \bigg)  \nonumber \\
& \quad \quad \quad \times G ( O(\alpha t) x-y, s) \frac{\dd s}{8 s^3} \dd t \nonumber \\
& \, = \,  I_k (x,y) + II_k (x,y)\,.\label{proof.lem.thm.linear.whole.1.18}
\end{align}
To estimate $I_k (x,y)$ we observe that 
\begin{align}
& \bigg | \int_{0}^{\infty}  \int_{t}^{\infty}
\bigg ( (\cos{\alpha t}) \partial_{y_k} A_1 + (\sin{\alpha t}) \partial_{y_k} A_2 \bigg)   G ( O(\alpha t) x-y, s) \frac{\dd s}{4 s^2} \dd t \bigg | \nonumber \\
& \leq C |y| \int_0^\infty \int_t^\infty e^{-\frac{|x|^2}{16 s}} \frac{\dd s}{s^3} \dd t \,  \leq \,  C \frac{|y|}{|x|^2}\,, 
\qquad |x| > 2 |y|\,,\label{proof.lem.thm.linear.whole.1.19}
\end{align}
and that
\begin{align}
& \bigg | \int_{0}^{\infty}  \int_{t}^{\infty}
\bigg ( \frac{\cos{2\alpha t}}{2} \partial_{y_k} A_3  + \frac{\sin{2\alpha t}}{2} \partial_{y_k} A_4  \bigg)   G ( O(\alpha t) x-y, s) \frac{\dd s}{4 s^2} \dd t \bigg | \nonumber \\
& \leq \bigg | \int_{0}^{\infty}  \int_{t}^{\infty}
\bigg ( \frac{\cos{2\alpha t}}{2} \partial_{y_k} A_3  + \frac{\sin{2\alpha t}}{2} \partial_{y_k} A_4  \bigg)   \bigg ( G ( O(\alpha t) x-y, s) - G(x,s) \bigg )  \frac{\dd s}{4 s^2} \dd t \bigg | \nonumber \\
& \quad +  \bigg | \int_{0}^{\infty}  \int_{t}^{\infty}
\bigg ( \frac{\cos{2\alpha t}}{2} \partial_{y_k} A_3  + \frac{\sin{2\alpha t}}{2} \partial_{y_k} A_4  \bigg)   G (x, s) \frac{\dd s}{4 s^2} \dd t \bigg | \nonumber \\
& \leq C \frac{|y|}{|x|^2} +  C \min \big \{ \frac{1}{|\alpha| |x|^3}, \frac{1}{|x|} \big \}\,,
\qquad |x| > 2 |y|\,.
\label{proof.lem.thm.linear.whole.1.20}
\end{align}
Here we have used \eqref{est.lem.thm.linear.whole.3.1} for the first term and \eqref{est.lem.thm.linear.whole.3.3} for the second term to derive the last line. It remains to estimate $II_k (x,y)$ in \eqref{proof.lem.thm.linear.whole.1.18}. Below we consider the case $k=1$ only, for the case $k=2$ is obtained in the same manner. 
The direct computation yields the following key identity:
\begin{align}\label{proof.lem.thm.linear.whole.1.21}
\begin{split}
&~~~ (O(\alpha t)x-y)_{1}  \big ( (\cos\alpha t) A_1 + (\sin\alpha t) A_2  \big ) \\
& \,=\, 
\frac{|x|^2}{2} 
\begin{pmatrix}
x_1 & 0 \\
x_2 & 0 
\end{pmatrix} 
+ (\cos 2\alpha t) 
D_1 (x,y) 
+ (\sin 2\alpha t) 
D_2 (x,y) 
+ D_3 (x,y, \alpha t)\,.
\end{split}
\end{align}
Here $D_1$ and $D_2$ are the matrices whose components are suitable sums of the third order polynomials of the form $x_1^{l_1} x_2^{l_2} y_1^{k_1} y_2^{k_2}$ with $l_1+l_2 \geq 1$,
while $D_3(x,y,\alpha t)$ is estimated as $|D_3|\leq C |x|^2 |y|$ for $|x|>2|y|$. 
Hence, recalling the expression of $L^{112}(x,y)$ in \eqref{proof.lem.thm.linear.whole.1.3}, we have
\begin{align}
& \bigg |\int_{0}^{\infty}  \int_{t}^{\infty} (O(\alpha t)x-y)_{1} 
\big ( (\cos{\alpha t}) A_1 + (\sin{\alpha t}) A_2  \big)   G ( O(\alpha t) x-y, s) \frac{\dd s}{8 s^3} \dd t  - \partial_{y_1} L^{112} (x,y) \bigg | \nonumber \\
&  \, = \, \bigg |\int_{0}^{\infty}  \int_{t}^{\infty} 
\bigg ( (\cos 2\alpha t) D_1 + (\sin 2\alpha t) D_2  + D_3  \bigg )  \,  G ( O(\alpha t) x-y, s) \frac{\dd s}{8 s^3} \dd t  \bigg |  \nonumber \\
& \leq \bigg |\int_{0}^{\infty}  \int_{t}^{\infty} 
\bigg ( (\cos 2\alpha t) D_1 + (\sin 2\alpha t) D_2  + D_3  \bigg )  \,  \bigg ( G ( O(\alpha t) x-y, s)  - G(x,s) \bigg ) \frac{\dd s}{8 s^3} \dd t  \bigg | \nonumber \\
& \quad + \bigg | \int_{0}^{\infty}  \int_{t}^{\infty} 
\bigg ( (\cos 2\alpha t) D_1 + (\sin 2\alpha t) D_2  + D_3  \bigg )  \,  G (x, s) \frac{\dd s}{8 s^3} \dd t  \bigg | \nonumber \\
& \leq C \frac{|y|}{|x|^2} + C \min \big \{ \frac{1}{|\alpha| |x|^3}, \frac{1}{|x|} \big \}\,,
\qquad |x| > 2 |y|\,.
\label{proof.lem.thm.linear.whole.1.22}
\end{align}
\noindent
Here, we have again applied \eqref{est.lem.thm.linear.whole.3.1} for the first term and \eqref{est.lem.thm.linear.whole.3.3} for the second term to derive the last line. Finally we have 
\begin{align}
& \bigg |  \int_{0}^{\infty}  \int_{t}^{\infty} (O(\alpha t)x-y)_{1} 
\bigg (  \frac{\cos{2\alpha t}}{2} A_3  + \frac{\sin{2\alpha t}}{2}  A_4  \bigg) \, G( O (\alpha t) x - y, s) \frac{\dd s}{8 s^3} \dd t \bigg |  \nonumber \\
& \leq C (|x|+|y|) |x| |y| \int_0^\infty \int_t^\infty e^{-\frac{|x|^2}{16 s}} \frac{\dd s}{s^4} \dd t \, \leq \, C \frac{|y|}{|x|^2}\,, \qquad  |x| > 2 |y|\,.\label{proof.lem.thm.linear.whole.1.23}
\end{align}
Collecting \eqref{proof.lem.thm.linear.whole.1.19}, \eqref{proof.lem.thm.linear.whole.1.20}, \eqref{proof.lem.thm.linear.whole.1.22}, and \eqref{proof.lem.thm.linear.whole.1.23}, we have shown that 
\begin{align}
\big| \partial_{y_1} \big ( \tilde \Gamma_\alpha^{112}  (x,y)  - L^{112} (x,y) \big ) \big|  \leq C \bigg ( \frac{|y|}{|x|^2} +  \min \big \{ \frac{1}{|\alpha| |x|^3}, \frac{1}{|x|} \big \}  \bigg )\,,
\quad  |x| > 2 |y|\,. \label{proof.lem.thm.linear.whole.1.24}
\end{align}
The estimate of $\partial_{y_2} \big ( \tilde \Gamma_\alpha^{112}  (x,y)  - L^{112} (x,y) \big )$ is obtained in the similar manner. Thus, from \eqref{proof.lem.thm.linear.whole.1.17} and \eqref{proof.lem.thm.linear.whole.1.24} we have obtained the estimates of the derivatives in $y$ for $\Gamma_\alpha^{11} (x,y)$. 
The proof of Lemma \ref{lem.thm.linear.whole.1} is complete.
\end{proofx}

\begin{proofx}{Theorem \ref{thm.linear.whole}} 
The assertion that $u=L[f]$ is a weak solution to \eqref{a} (whose definitions are stated in the beginning of this subsection) follows from a similar argument as in \cite[Proposition 3.2]{H3}. So we omit the details on this part  and we focus on the proof for the estimates of $u$ here. (i) Let $\gamma \in [0,1)$. Suppose that ${\rm supp}\, f\subset B_{R}$ for some $R\ge1 $. Note that $\frac{(y^\bot \cdot f(y)) x^\bot}{4\pi |x|^2} = L(x,y) f(y)$ holds. Let $|x|\geq 2 R$. Then we have from Lemma \ref{lem.thm.linear.whole.1} with $m=0$,
\begin{align}
& \bigg| \int_{\R^2} \Gamma_\alpha (x,y) f \dd y - c[f] \frac{x^\bot}{4\pi |x|^2} \bigg|  \nonumber \\
& \,=\,  \bigg| \int_{|y|\leq R} \big ( \Gamma_\alpha (x,y) - L (x,y) \big ) f(y) \dd y \bigg| \nonumber \\
& \leq C \int_{|y|\leq R}  \bigg (  \min \big\{ \frac{1}{|\alpha| |x|^2}, \frac{1}{|\alpha|^{\frac{1}{2}} |x|} \big\}
+ |x|  \min \big\{ \frac{1}{|\alpha| |x|^3}, \frac{1}{|x|} \big\}  +  \frac{|y|^{2}} {|x|^2} \bigg ) 
|f(y)| \dd y\nonumber \,,
\end{align}
which implies $L[f] (x) = c[f] \frac{x^\bot}{4\pi |x|^2} + \mathcal{R}[f](x)$ with 
\begin{align}
\begin{split}
|x|^{1+\gamma} \, |\mathcal{R}[f](x)| 
& \leq 
C \bigg( 
\min \big\{ \frac{1}{|\alpha| |x|^{1-\gamma}}, \frac{|x|^\gamma}{|\alpha|^{\frac{1}{2}}} \big\}  
\| f\|_{L^1 (B_{R})}  \\
& \quad  
+ \min \big \{ \frac{1}{|\alpha| |x|^{1-\gamma}}, 
|x|^{1+\gamma} \big \} 
\| f \|_{L^1 (B_{R})}
+ \| |y|^{1+\gamma} f \|_{L^1 (B_{R})} 
\bigg ) \,.\label{proof.thm.linear.whole.1}
\end{split}
\end{align}
Here $C$ is independent of $x$, $R$, $\alpha$, $\gamma$, and $f$. Then we use the inequality for $\gamma\in [0,1)$,
\begin{align}\label{proof.thm.linear.whole.2}
\min \big\{ \frac{1}{|\alpha| |x|^{1-\gamma}}, \frac{|x|^\gamma}{|\alpha|^{\frac{1}{2}}} \big\} \leq |\alpha|^{-\frac{1+\gamma}{2}}\,,\quad\quad  
\min \big \{ \frac{1}{|\alpha| |x|^{1-\gamma}}, |x|^{1+\gamma} \big \} \leq |\alpha|^{-\frac{1+\gamma}{2}}\,,
\end{align}
\noindent
which leads to \eqref{est.thm.linear.whole.2}.

\noindent (ii) Let $\gamma\in [0,1)$ 
and write $\Gamma_\alpha (x,y) = \big ( \Gamma_\alpha (x,y)_{ij}\big )_{1\leq i,j\leq 2}$ and $L(x,y) = (L(x,y)_{ij})_{1\leq i,j\leq 2}$. From the integration by parts we see for $k=1,2$ and $f=(\sum_{l=1,2}\partial_l F_{1l}, \sum_{l=1,2}\partial_l F_{2l})^\top$,
\begin{align*}
&~~~ \int_{\R^2} e^{-\epsilon |y|^2} (\Gamma_\alpha (x,y) f )_k \dd y  \, = \, \sum_{j=1,2} \int_{\R^2} e^{-\epsilon |y|^2} \Gamma_\alpha (x,y)_{kj} f_j \dd y\\
& \, = \, - \sum_{j=1,2} \sum_{l=1,2} \int_{\R^2} e^{-\epsilon |y|^2}  \partial_{y_l} \Gamma_\alpha (x,y )_{kj}  F_{jl} \dd y 
+ 2\epsilon \sum_{j=1,2} \sum_{l=1,2} \int_{\R^2} e^{-\epsilon |y|^2} y_l \Gamma_\alpha (x,y)_{kj} F_{jl} \dd y\\
& \, = \, - \sum_{j=1,2} \sum_{l=1,2} \int_{\R^2} e^{-\epsilon |y|^2}  \partial_{y_l} \big ( \Gamma_\alpha (x,y )_{kj} -  L (x,y)_{kj} \big )  F_{jl} \dd y \\
& \quad  -\sum_{j=1,2} \sum_{l=1,2}  \int_{\R^2} e^{-\epsilon |y|^2}  \partial_{y_l}  L (x,y)_{kj}   \, F_{jl} \dd y  + 2\epsilon \int_{\R^2} e^{-\epsilon |y|^2} (\Gamma_\alpha (x,y) \, F \, y )_k \dd y \,.
\end{align*} 
Note that  
\begin{align*}
\big (- \sum_{j=1,2} \sum_{l=1,2} \partial_{y_l} L (x,y) _{1j}  \, F_{jl}\,, \, \, - \sum_{j=1,2} \sum_{l=1,2} \partial_{y_l} L (x,y) _{2j} \, F_{jl} \big )^\top \, = \, (F_{12}-F_{21})\frac{x^\bot}{4\pi |x|^2}
\end{align*}
by the definition of $L(x,y)$. Moreover, we have $|\Gamma_\alpha (x,y)| \leq \frac{C(\alpha, |x|)}{|y|}$ for $|y|>2|x|$ by \cite[Proposition 3.1]{H3}, and $\int_{|y|\leq 2|x|} |\Gamma_\alpha (x,y)| \dd y\leq C' (\alpha, |x|)<\infty$ by \cite[Lemma 3.3]{H3}, which implies 
\begin{align*}
\lim_{\epsilon\rightarrow 0} \epsilon \int_{\R^2} e^{-\epsilon |y|^2} \Gamma_\alpha (x,y) \, F \, y \dd y \, = \, 0
\end{align*}
for $F\in L^\infty_{2+\gamma} (\R^2)^{2\times 2}$. For simplicity we use the next notations:
\begin{align*}
&\nabla_y \Gamma_\alpha (x,y) \, F
\,=\,
\big ( \sum_{j=1,2} \sum_{l=1,2} \partial_{y_l} \Gamma_\alpha (x,y) _{1j}  \, F_{jl}\,, \, \, 
\sum_{j=1,2} \sum_{l=1,2} \partial_{y_l} \Gamma_\alpha (x,y) _{2j} \, F_{jl} \big )^\top \,, \\
&\nabla_y L (x,y) \, F 
\,=\,
\big ( \sum_{j=1,2} \sum_{l=1,2} \partial_{y_l} L(x,y) _{1j}  \, F_{jl}\,, \, \, 
\sum_{j=1,2} \sum_{l=1,2} \partial_{y_l} L(x,y) _{2j} \, F_{jl} \big )^\top \,.
\end{align*}
Then we have 
\begin{align}\label{proof.thm.linear.whole.3}
\begin{split}
L[f] (x) 
& \, = \, -\int_{\R^2} \nabla_y \Gamma_\alpha (x,y) \, F (y) \dd y\\
& \, = \,  - \int_{|y|<\frac{|x|}{2}}   \nabla_y \big ( \Gamma_\alpha (x,y ) -  L (x,y) \big )  F(y) \dd y -  \int_{|y| \ge \frac{|x|}{2} } \nabla_{y} \Gamma_{\alpha}(x,y) \, F(y) \dd y \\
& ~~~~~ - \lim_{\epsilon\rightarrow 0} \int_{|y|\geq \frac{|x|}{2}} e^{-\epsilon |y|^2} \big(F_{12}(y) - F_{21}(y) \big) \dd y \frac{x^\bot}{4\pi |x|^2} +\tilde c[F] \frac{x^\bot}{4\pi |x|^2}\,.
\end{split}
\end{align}
The sum of the first three terms of the right-hand side of this equality is denoted by $\mathcal{R}[f]$.
To estimate $\mathcal{R}[f]$ we firstly observe from Lemma \ref{lem.thm.linear.whole.1},
\begin{align}
& \bigg| \int_{|y| < \frac{|x|}{2} }  \nabla_{y} \big(\Gamma_{\alpha}(x,y) - L(x,y) \big) F(y) \dd y \bigg |  \nonumber \\
& \le C \bigg ( \frac{1}{|x|^2} \int_{|y| < \frac{|x|}{2} }  |y \, F(y)| \dd y 
 + \min \big\{ \frac{1}{|\alpha||x|^3}, \frac{1}{|x|} \big\} \int_{|y|<\frac{|x|}{2}} |F(y)| \dd y \bigg )\,, \qquad x\ne 0 \,.\label{proof.thm.linear.whole.4}
\end{align}
Next we have from the direct calculation 
\begin{equation*}
|(\nabla_x K)(x,t)| \leq  C \big( t^{-\frac{3}{2}} e^{-\frac{|x|^2}{16t}} 
+  \int_{t}^{\infty} s^{-\frac{5}{2}} e^{- \frac{|x|^2}{16s}} \dd s \big)\,,
\end{equation*} 
which implies 
\begin{align*}
\int_0^\infty |(\nabla K) (O(\alpha t) x,t )| \dd t\leq \frac{C}{|x|}\,,~~~~\quad ~x\ne 0\,.
\end{align*}
Then by the transformation of the variables $y = O(\alpha t)z$ we have 
\begin{align}
&~~~ 
\bigg|  \int_{|y| \ge \frac{|x|}{2} } \nabla_{y} \Gamma_{\alpha}(x,y) F(y) \dd y\bigg|  \nonumber \\
& \le  
\int_{|y| \ge \frac{|x|}{2} }  \bigg(  \int_{0}^{\infty}  |(\nabla K)(O(\alpha t)x - y, t)| \dd t \bigg) |F(y)| \dd y \nonumber \\
& \le 
\|F\|_{L^\infty_{2+\gamma} (B^{{\rm c}}_{\frac{|x|}{2}})} 
\int_{|z| \ge \frac{|x|}{2} } \bigg( \int_{0}^{\infty}  |(\nabla K)(O(\alpha t)(x - z), t)| \dd t  \bigg) |z|^{-2-\gamma} \dd z \nonumber \\
& \le  
C \|F\|_{L^\infty_{2+\gamma} (B^{{\rm c}}_{\frac{|x|}{2}})} 
\int_{|z| \geq  \frac{|x|}{2}} |x-z|^{-1} |z|^{-2-\gamma} \dd z\nonumber \\
& \leq 
\frac{C}{|x|^{1+\gamma}} \|F\|_{L^\infty_{2+\gamma} (B^{{\rm c}}_{\frac{|x|}{2}})} \,.
\label{proof.thm.linear.whole.5}
\end{align}
Here $C$ is independent of $x$ and $\gamma \in [0,1)$.
Collecting \eqref{proof.thm.linear.whole.3}, \eqref{proof.thm.linear.whole.4}, and \eqref{proof.thm.linear.whole.5}, we obtain  \eqref{est.thm.linear.whole.3} and \eqref{est.thm.linear.whole.4}. The proof of Theorem \ref{thm.linear.whole} is complete.
\end{proofx}

Based on the results of Theorem \ref{thm.linear.whole} we study the exterior problem \eqref{Sa} in the next subsection, where its asymptotic profile is represented as the solution to \eqref{a} by a cut-off technique. However, the existence of solutions to \eqref{Sa} decaying at spatial infinity has to be proved carefully. As in \cite{H3}, for the exterior problem, a natural way to construct solutions decaying at spatial infinity is to consider first a regularized system and to take the limit; see the proof of Theorem \ref{thm.linear.exterior} for details. In this procedure we need to consider the following system in the whole space:
\begin{equation}\tag{S$^{\lambda}_{\alpha,\R^2}$}\label{RSea}
  \left\{\begin{aligned}
 \lambda u_{\lambda} - \Delta u_{\lambda}  - \alpha ( x^\bot \cdot \nabla u_{\lambda} - u_{\lambda}^\bot ) + \nabla p_{\lambda} &\,=\, f \,,  ~~~  {\rm div}\, u_{\lambda} \,=\, 0\,, ~~~~~~ x \in \R^2\,, \\
 u_{\lambda}  & \, \rightarrow \,     0   \,,  ~~~~~~ |x| \rightarrow \infty\,, 
\end{aligned}\right.
\end{equation}
where $\lambda$ is a small positive number. Let us introduce the integral kernel $\Gamma_{\alpha}^{\lambda}(x,y)$ as 
\begin{equation}\label{def.Gamma_epsilon}
\Gamma_{\alpha}^{\lambda}(x,y)
\, =\, \int_{0}^{\infty} e^{-\lambda t} O(\alpha t)^{\top}K(O(\alpha t)x-y,t) \dd t\,,~~~~~~ x\ne y\,.
\end{equation}
In virtue of the positive $\lambda$, the integral in \eqref{def.Gamma_epsilon} converges absolutely for $x\ne y$. Furthermore,  the velocity $u_\lambda$ defined by
\begin{align}
u_\lambda (x) \, = \, \int_{\R^2} \Gamma_{\alpha}^{\lambda}(x,y) f(y) \dd y\,, \qquad 
f \in L^2 (\R^2)^2\,, 
\label{regular.u}
\end{align}
satisfies \eqref{RSea} in the sense of distributions with a suitable pressure $\nabla p_\lambda$. 
The next lemma will be used in the proof of Theorem \ref{thm.linear.exterior}.

\begin{lemma}\label{lem.thm.linear.exterior.1}
Let $ \alpha \in \R \setminus \{0\}$ and $\gamma \in [0,1)$.
Suppose that $f \in L^2 (\R^2)^2$ is of the form $f = {\rm div}\,F$
with some $F\in L^\infty_{2+\gamma} (\R^2)^{2\times 2}$. 
Then for any $\theta \in (0,1)$ and $R\ge1$, the velocity $u_\lambda$ defined by \eqref{regular.u} satisfies 
\begin{equation}\label{est.lem.thm.linear.exterior.1.1}
\| u_{\lambda} \|_{L^{\infty}_{\theta}(B^{{\rm c}}_{2R})}
\leq C 
\big( \|F\|_{L^{\infty}_{2+\gamma}(B^{{\rm c}}_{R})} + \| F \|_{L^1 (B_{R})} \big) \,.
\end{equation}
Here the constant $C$ is independent of $\lambda$ and $\gamma$, and depends only on $\theta$ and $R$.
\end{lemma}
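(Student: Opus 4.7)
The plan is to mirror the structure of the proof of Theorem \ref{thm.linear.whole} (ii), replacing $\Gamma_\alpha$ with the regularized kernel $\Gamma_\alpha^\lambda$ but only demanding the weaker decay rate $|x|^{-\theta}$ with $\theta<1$, which will comfortably absorb a possible logarithmic loss.

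First I would justify the integration by parts
\begin{equation*}
u_\lambda(x) \,=\, -\int_{\R^2} \nabla_y \Gamma_\alpha^\lambda(x,y)\, F(y) \dd y\,, \qquad x \in \R^2\,,
\end{equation*}
either by invoking the fast decay of $\Gamma_\alpha^\lambda(x,\cdot)$ at infinity coming from the Gaussian factor of $K$ reinforced by $e^{-\lambda t}$, or, to bypass any $\lambda$-dependence, by inserting a cut-off $e^{-\epsilon |y|^2}$ as in \eqref{proof.thm.linear.whole.3} and letting $\epsilon \to 0$; the extra boundary term is controlled by $F \in L^\infty_{2+\gamma}$ exactly as there.

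Next I would record the pointwise bound
\begin{equation*}
|\nabla_y \Gamma_\alpha^\lambda(x,y)| \,\leq\, \int_0^\infty e^{-\lambda t} |(\nabla K)(O(\alpha t)x - y, t)| \dd t \,\leq\, \frac{C}{|x-y|}\,, \qquad x \ne y\,,
\end{equation*}
uniformly in $\lambda \geq 0$. This is a consequence of the estimate on $\nabla K$ already recalled in the proof of Theorem \ref{thm.linear.whole} (just before \eqref{proof.thm.linear.whole.5}), together with the change of variables $y = O(\alpha t) z$; since $e^{-\lambda t} \le 1$, $\lambda$ drops out of the bound.

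I would then split the integral at $|y|=R$. For $|x| \geq 2R$, the inner piece satisfies $|x-y| \geq |x|/2$, so
\begin{equation*}
\Big|\int_{|y|\leq R} \nabla_y \Gamma_\alpha^\lambda(x,y)\, F(y) \dd y\Big| \,\leq\, \frac{C}{|x|}\,\|F\|_{L^1(B_R)}\,,
\end{equation*}
which is already $O(|x|^{-1})$. For the outer piece, the $L^\infty_{2+\gamma}$ bound on $F$ reduces the problem to controlling $\int_{|y|\geq R} |y|^{-2-\gamma} |x-y|^{-1} \dd y$. Decomposing this into the three regions $R\leq |y|\leq |x|/2$, $|x|/2\leq |y|\leq 2|x|$, and $|y|\geq 2|x|$ (as in \eqref{proof.thm.linear.whole.5}) gives a bound of the form $C|x|^{-1}\big(1 + \log(|x|/R)\big)$, uniformly in $\gamma \in [0,1)$ and in $\lambda$.

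The main obstacle is precisely the borderline case $\gamma=0$, where the clean $|x|^{-1}$ decay picks up the logarithmic factor $\log(|x|/R)$ from integrating $|y|^{-2}$ over an annulus of width comparable to $|x|/R$. Since $\theta<1$ and $R \ge 1$, we have $\log(|x|/R) \le C_\theta |x|^{1-\theta}$ for $|x|\geq 2R$, and so the outer contribution is bounded by $C_\theta |x|^{-\theta}\|F\|_{L^\infty_{2+\gamma}(B^{\mathrm{c}}_R)}$. Combining the inner and outer estimates then yields \eqref{est.lem.thm.linear.exterior.1.1} with a constant depending only on $\theta$ and $R$.
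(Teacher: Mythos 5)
Your strategy is genuinely leaner than the paper's: you split the $y$-integral at the fixed radius $R$ and never introduce the companion kernel $L^\lambda(x,y)$, which the paper subtracts and adds back inside $B_{|x|/2}$. That simplification is legitimate here precisely because the lemma only asks for $L^\infty_\theta$ decay with $\theta<1$, so a logarithmic loss is affordable; the paper's route is essentially the machinery of Theorem~\ref{thm.linear.whole} kept intact, whereas yours is tailored to the weaker conclusion. Your final integral reductions (the $\|F\|_{L^1(B_R)}/|x|$ bound on the inner piece, the three-region estimate of $\int_{|y|\ge R}|y|^{-2-\gamma}|x-y|^{-1}\dd y$ on the outer piece, and the elementary inequality $\log(|x|/R)\le C_\theta|x|^{1-\theta}$) are all correct.

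However, the pivotal step is stated wrongly and, as written, is false. The bound
\[
|\nabla_y\Gamma_\alpha^\lambda(x,y)| \,\le\, \int_0^\infty |(\nabla K)(O(\alpha t)x - y,t)|\,\dd t \,\le\, \frac{C}{|x-y|}
\]
with $C$ independent of $\alpha$ does \emph{not} hold. The change of variables $y=O(\alpha t)z$ acts on a $y$-integral and cannot produce a pointwise bound: for the fixed pair $(x,y)$, the argument $O(\alpha t)x - y$ has modulus $2|x|\,|\sin((\alpha t-\theta_0)/2)|$ (when $|x|=|y|$), which vanishes at infinitely many $t_k=(\theta_0+2\pi k)/\alpha$; summing the peak contributions near these times yields a lower bound of order $|x|^{-1}\log(|\alpha||x|^2)$, which exceeds $C/|x-y|$ when $|x-y|\sim|x|$ and $|\alpha||x|^2\gg 1$. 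What \emph{is} true — and what both your downstream estimates actually rely on — are two different facts. For $|y|\le R\le |x|/2$ you should argue that $|O(\alpha t)x-y|\ge|x|-R\ge|x|/2$ for \emph{every} $t$, so $\int_0^\infty|(\nabla K)(O(\alpha t)x-y,t)|\,\dd t\le C/|x|$; this gives the inner estimate with no appeal to $|x-y|$. For the outer piece you must first apply Fubini to swap the $t$- and $y$-integrations and \emph{then} substitute $y=O(\alpha t)z$; only after this do you see the effective kernel $|x-z|^{-1}$ (because $|O(\alpha t)(x-z)|=|x-z|$ is constant in $t$), exactly as in the proof of \eqref{proof.thm.linear.whole.5}. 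With the step rephrased in this way, your argument is correct and is a clean alternative to the paper's.
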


\begin{proof}
In the same way as in the proof of Lemma \ref{lem.thm.linear.whole.1},
we define $L^{\lambda} = L^{\lambda}(x,y)$ by
\begin{equation*}
L^{\lambda}(x,y) 
\,=\,  L^{\lambda, 0}(x,y) + L^{\lambda, 111}(x,y) 
+ L^{\lambda, 112}(x,y) + L^{\lambda, 12}(x,y)\,,
\end{equation*}
where
\begin{equation*}
\begin{split}
& L^{\lambda,0}(x,y) \, = \, 
\int^{\infty}_0 
e^{-\lambda t}
G(x,t) \frac{\dd t}{4 t} 
\begin{pmatrix}
x\cdot y  & x^{\bot}\cdot y \\
-x^{\bot}\cdot y & x\cdot y \\
\end{pmatrix}\,, \\
& L^{\lambda,111}(x,y) \,= \, 
\int_{0}^{\infty} \int^{\infty}_t 
e^{-\lambda t}
G(x,s)  \frac{\dd s}{4s^2} \dd t
\, \bigg( \frac{-3(x\otimes y) +(x^{\perp}\otimes y^{\perp})}{2} \bigg)\,, \\
& L^{\lambda,112}(x,y) \, = \, 
 \int^{\infty}_0 \int^{\infty}_t 
e^{-\lambda t}
G(x,s) \frac{\dd s}{16s^3} \dd t
\, |x|^2 (x\otimes y)\,, \\
& L^{\lambda,12}(x,y) \, =\, 
- \int^{\infty}_0 \int^{\infty}_t 
e^{-\lambda t}
G(x,s) \frac{\dd s}{8s^2} \dd t
\begin{pmatrix}
x\cdot y & x^{\bot} \cdot y \\
-x^{\bot} \cdot y & x\cdot y \\
\end{pmatrix}\,. \\
\end{split}
\end{equation*}
Then we have
\begin{align}
| \nabla_{y} L^{\lambda} (x,y) | 
& \le 
C |x| 
\bigg(
\int_{0}^{\infty} 
e^{-\frac{|x|^2}{4t}} \frac{\dd t}{t^2} 
+ \int_{0}^{\infty} \int_{t}^{\infty} 
e^{-\frac{|x|^2}{4s}} \frac{\dd s}{s^3} \dd t
+ |x|^2 \int_{0}^{\infty}  \int_{t}^{\infty} 
e^{-\frac{|x|^2}{4s}} \frac{\dd s}{s^4} \dd t
\bigg) \nonumber \\
& \le
\frac{C}{|x|}\,,  \qquad|x| > 0\,, 
\label{proof.lem.thm.linear.exterior.1.1}
\end{align}
where the constant $C$ is independent of $\alpha$ and $\lambda$. By the integration by parts we rewrite $u_\lambda$ as
\begin{equation}\label{proof.lem.thm.linear.exterior.1.2}
\begin{split}
u_\lambda (x) & \, =\, - \int_{\R^2} \nabla_y \Gamma_\alpha^\lambda (x,y) \, F (y) \dd y \\ 
& \, =\, 
- \int_{|y| < \frac{|x|}{2}} \nabla_{y} \big( \Gamma^{\lambda}_{\alpha}(x,y) - L^{\lambda}(x,y) \big) F(y) \dd y 
-  \int_{|y| \geq \frac{|x|}{2}}  \nabla_{y} \Gamma^{\lambda}_{\alpha}(x,y) F(y) \dd y \\
& ~~~~~ - \int_{|y| < \frac{|x|}{2}} \nabla_{y} L^{\lambda}(x,y) F(y) \dd y\,.
\end{split}
\end{equation}
Then, proceeding as in the proof of Lemma \ref{lem.thm.linear.whole.1}, we obtain
\begin{equation}\label{proof.lem.thm.linear.exterior.1.3}
|\nabla_{y} \big(\Gamma_{\alpha}^{\lambda}(x,y) - L^{\lambda}(x,y) \big)| 
\le
C \bigg( \frac{|y|}{|x|^2} + \min \big \{ \frac{1}{|\alpha||x|^3}, \frac{1}{|x|} \big \} \bigg)\,,
\qquad |x| > 2 |y|\,, 
\end{equation}
where $C$ is independent of $x$, $y$, $\alpha$, and $\lambda$. Then we have
\begin{equation}\label{proof.lem.thm.linear.exterior.1.4}
\begin{split}
\bigg| \int_{|y| < \frac{|x|}{2}} \nabla_{y} \big( \Gamma^{\lambda}_{\alpha}(x,y) - L^{\lambda}(x,y) \big) F(y) \dd y \bigg| 
& \leq \frac{C}{|x|} \| F \|_{L^1 (B_{ \frac{|x|}{2} }) } \\
& \leq \frac{C \log (2 + |x|) }{|x|} \| F \|_{L^\infty_{2+\gamma} (\R^2)}\,, \quad |x| > 1\,,
\end{split}
\end{equation}
where the constant $C$ is independent of $\lambda$ and $\gamma$. The second term in the right-hand side of \eqref{proof.lem.thm.linear.exterior.1.2} is also estimated as in the proof of Lemma \ref{lem.thm.linear.whole.1}, resulting the estimate
\begin{equation}\label{proof.lem.thm.linear.exterior.1.5}
\bigg | \int_{ |y| \geq \frac{|x|}{2} } \nabla_{y} \Gamma^{\lambda}_{\alpha}(x,y) F(y) \dd y \bigg | 
\leq \frac{C}{|x|^{1+\gamma}}  
\|F\|_{L^\infty_{2+\gamma} (B^{{\rm c}}_{\frac{|x|}{2}})}\,.
\end{equation}
For the last term in the right-hand side of \eqref{proof.lem.thm.linear.exterior.1.2} it is straightforward from \eqref{proof.lem.thm.linear.exterior.1.1} to see
\begin{equation}\label{proof.lem.thm.linear.exterior.1.6}
\bigg | \int_{ |y| <  \frac{|x|}{2}} \nabla_{y} L^{\lambda} (x,y) F \dd y \bigg |
\leq 
\frac{C \log (2 + |x|) }{|x|} \| F \|_{L^\infty_{2+\gamma} (\R^2)}\,, \qquad |x| >1\,.
\end{equation}
Collecting \eqref{proof.lem.thm.linear.exterior.1.4}, \eqref{proof.lem.thm.linear.exterior.1.5}, and \eqref{proof.lem.thm.linear.exterior.1.6}, we obtain \eqref{est.lem.thm.linear.exterior.1.1}.
This completes the proof.
\end{proof}

\subsection{Linear estimate in the exterior domain}\label{subsec.linear.exterior}

In this subsection we study the asymptotic estimates for solutions to the Stokes system in the exterior domain
\begin{equation}\tag{S$_{\alpha}$}\label{Sa}
  \left\{\begin{aligned}
  -\Delta u - \alpha ( x^\bot \cdot \nabla u - u^\bot ) + \nabla p  &\,=\, f \,, 
  \quad {\rm div}\, u \,=\, 0\,,  ~~~~~~   x \in \Omega\,, \\
 u  & \, =\,  0   \,,  ~~~~~~ x\in \partial\Omega\,, \\
 u  & \, \rightarrow \,     0   \,,  ~~~~~~ |x| \rightarrow \infty\,,
\end{aligned}\right.
\end{equation}
where $\alpha \in \R\setminus \{0\}$ is a given constant. In the following,
we fix a positive number $R_0\geq 1$ large enough so that $\R^2\setminus \Omega\subset B_{R_0}$ holds. 
We also fix a radial cut-off function $\varphi\in C_0^\infty (\R^2)$ such that $\varphi (x)=1$ for $|x|\leq R_0$ and $\varphi (x) =0$ for $|x|\geq 2R_0$. As in the previous subsection, for $f\in L^2(\Omega)^2$ and $F\in L^2 (\Omega)^{2\times 2}$ we formally set 
\begin{align}
\begin{split}
c_\Omega [f] & \, = \, 
\lim_{\epsilon \rightarrow 0} \int_{\Omega} e^{-\epsilon |y|^2} y^\bot \cdot f(y) \dd y\,,\label{def.c.tildec.exterior}\\
\tilde c_\Omega [F] & \, = \, 
\lim_{\epsilon \rightarrow 0} \int_{\Omega} e^{-\epsilon |y|^2} \big(F_{12}(y) - F_{21}(y) \big)\dd y\,.
\end{split}
\end{align}
These are well-defined at least when $f={\rm div}\, F$ with $F\in L^\infty_{2+\gamma} (\Omega)^{2\times 2}$ for some $\gamma>0$, and $c_\Omega [f]=\tilde c_\Omega [F]$ holds in this case if the generalized traces $\nu \cdot (x_2 \vec{F_1})$, $\nu \cdot (x_1 \vec{F_2})$ on $\partial\Omega$ are zero in addition. Here we have set $F=(\vec{F_1}, \vec{F_2})^\top$. Note that the coefficient $\tilde c_\Omega[F]$ is well-defined only under the condition $F_{12}-F_{21}\in L^1 (\Omega)$. In general, we have the following.

\begin{lemma}\label{lem.coefficient} Let $f\in L^2 (\Omega)^2$ be of the form $f={\rm div}\, F=(\sum_{j=1,2}\partial_j F_{1j}, \sum_{j=1,2}\partial_j F_{2j})^\top$ for some $F\in L^\infty_{2,0}(\Omega)^{2\times 2}$ and $F_{12}-F_{21}\in L^1 (\Omega)$. Then both $c_\Omega [f]$ and $\tilde c_\Omega [F]$ converge.
\end{lemma}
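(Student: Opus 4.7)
The plan is to treat the two quantities separately. Convergence of $\tilde c_\Omega[F]$ is immediate: since $F_{12}-F_{21}\in L^1(\Omega)$ and $e^{-\epsilon|y|^2}\le 1$ with $e^{-\epsilon|y|^2}\to 1$ pointwise, the Lebesgue dominated convergence theorem yields $\tilde c_\Omega[F]=\int_\Omega (F_{12}-F_{21})\dd y$.

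For $c_\Omega[f]$ the idea is to transfer the derivative off $f$ via integration by parts, while keeping the test function supported away from $\partial\Omega$ (where $F$ only has generalized traces). I would fix a smooth cutoff $\psi\in C^\infty(\R^2)$ with $\psi\equiv 1$ on a neighborhood of $\R^2\setminus\Omega$ and $\mathrm{supp}\,\psi$ compact, and split $y^\perp=\psi y^\perp+(1-\psi)y^\perp$. The first piece produces $\int_\Omega e^{-\epsilon|y|^2}\psi\, y^\perp\cdot f\dd y$, which converges to $\int_\Omega \psi\, y^\perp\cdot f\dd y$ as $\epsilon\to 0$ by dominated convergence, since $\psi y^\perp$ is bounded with compact support and $f\in L^2(\Omega)\subset L^1_{loc}$. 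For the second piece, the test function $\phi_\epsilon:=(1-\psi)e^{-\epsilon|y|^2}y^\perp$ vanishes in a neighborhood of $\partial\Omega$ and decays rapidly at infinity, so integration by parts against $f={\rm div}\,F$ is valid without any boundary contribution (after approximating by $\chi_N\phi_\epsilon$ with $\chi_N$ a standard cutoff and using that the remainder controlled by $|F|\le\|F\|_{L^\infty_2}$ and $e^{-\epsilon N^2}$ vanishes as $N\to\infty$). Expanding $\partial_j\bigl((1-\psi)e^{-\epsilon|y|^2}y^\perp_i\bigr)$ and using $\sum_{i,j}F_{ij}\partial_j y^\perp_i=-(F_{12}-F_{21})$ produces three terms: a main term $\int_\Omega(1-\psi)e^{-\epsilon|y|^2}(F_{12}-F_{21})\dd y$, a cutoff term carrying the factor $\nabla\psi$, and an $\epsilon$-term of the form $2\epsilon\int_\Omega e^{-\epsilon|y|^2}(1-\psi)\,y^\perp\cdot(Fy)\dd y$.

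As $\epsilon\to 0$, the main term converges to $\int_\Omega(1-\psi)(F_{12}-F_{21})\dd y$ by the $L^1$ hypothesis, while the cutoff term is a finite integral over the compact set $\mathrm{supp}\,\nabla\psi$ and converges by the same theorem. The crux is to show that the $\epsilon$-term vanishes, which is precisely where the $L^\infty_{2,0}$ condition (not merely $L^\infty_2$) is essential. Setting $g(y):=|y|^2|F(y)|$, the assumption gives $g\in L^\infty(\Omega)$ together with $\eta_R:={\rm ess.sup}_{|y|\ge R}g(y)\to 0$ as $R\to\infty$. Given $\eta>0$, fix $R_\eta$ with $\eta_{R_\eta}<\eta$; bounding $|y^\perp\cdot(Fy)|\le g(y)$ and splitting the integral at $|y|=R_\eta$ yields
\[
\epsilon\int_\Omega e^{-\epsilon|y|^2}g(y)\dd y \;\le\; \|g\|_\infty\pi R_\eta^2\,\epsilon \;+\; \eta\cdot\epsilon\int_{\R^2}e^{-\epsilon|y|^2}\dd y \;=\; \|g\|_\infty\pi R_\eta^2\,\epsilon + \pi\eta,
\]
whose $\limsup_{\epsilon\to 0}$ is $\pi\eta$; letting $\eta\to 0$ proves the claim. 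Combining the three limits with the compactly supported piece gives the existence of $c_\Omega[f]$.

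The main obstacle is the $\epsilon$-term: since $\epsilon\int_{\R^2}e^{-\epsilon|y|^2}\dd y=\pi$ for every $\epsilon>0$, merely assuming $|F(y)|\le C(1+|y|)^{-2}$ would leave an $O(1)$ residue in the limit; the vanishing essential supremum at infinity built into $L^\infty_{2,0}$ is what forces this residue to zero and makes the regularization $e^{-\epsilon|y|^2}$ truly equivalent (in the limit) to an integration by parts against the unregularized constant weight $y^\perp$.
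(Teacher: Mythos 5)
Your proposal is correct and follows the same overall strategy as the paper: the convergence of $\tilde c_\Omega[F]$ is immediate from $F_{12}-F_{21}\in L^1$, and for $c_\Omega[f]$ both you and the paper introduce a compactly supported cutoff near $\partial\Omega$, integrate by parts on the complementary piece, and isolate the $\epsilon$-term
$2\epsilon\int_\Omega e^{-\epsilon|y|^2}(1-\psi)\,y^\perp\cdot(Fy)\dd y$ as the only piece where $L^\infty_{2,0}$ (rather than $L^\infty_2$) is really needed. The single place where the two arguments diverge is in showing this $\epsilon$-term vanishes. The paper rescales $y=\epsilon^{-1/2}z$ and invokes the Lebesgue dominated convergence theorem on $\int_{\R^2}e^{-|z|^2}\,\big|\epsilon^{-1/2}z\big|^2\big|F(\epsilon^{-1/2}z)\big|\dd z$, the pointwise limit being zero because $|y|^2|F(y)|\to 0$ as $|y|\to\infty$ while the integrand is dominated by $\|F\|_{L^\infty_2}e^{-|z|^2}$. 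You instead split the integral at a radius $R_\eta$ chosen so that $\operatorname{ess\,sup}_{|y|\ge R_\eta}|y|^2|F(y)|<\eta$, bound the bulk by $\pi R_\eta^2\|g\|_\infty\,\epsilon$ and the tail by $\pi\eta$, and send $\epsilon\to 0$ then $\eta\to 0$. This is a slightly more elementary route (no change of variables, no dominated convergence) that makes the same point; both hinge on the identity $\epsilon\int_{\R^2}e^{-\epsilon|y|^2}\dd y=\pi$, which is exactly why, as you observe at the end, an $O(|y|^{-2})$ bound alone would leave an $O(1)$ residue and the vanishing at infinity built into $L^\infty_{2,0}$ is indispensable.
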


\begin{proof} It is trivial that $\tilde c_\Omega [F]$ converges. 
Let $\varphi\in C_0^\infty (\R^2)$ be a cut-off function introduced at the beginning of this subsection. The convergence of $c_\Omega[f]$ easily follows from the integration by parts: 
\begin{align}
\begin{split}
c_\Omega [f] 
& \, = \, 
\int_\Omega y^\bot \cdot f \varphi \dd y 
+ \lim_{\epsilon \rightarrow 0} 
\int_\Omega e^{-\epsilon |y|^2} (1 - \varphi) y^\bot \cdot f \dd y \\
& \, = \, 
\int_\Omega y^\bot \cdot f \varphi \dd y + \, \tilde c_\Omega [F] \, - \int_\Omega (F_{12} - F_{21}) \varphi \dd y \\
& \quad
+ \int_\Omega y^\bot  \cdot  F \nabla \varphi  \dd y 
+ \lim_{\epsilon \rightarrow 0} 
2 \int_\Omega e^{-\epsilon |y|^2} \epsilon y^\bot \cdot (F y) (1 - \varphi) \dd y \,.
\label{proof.lem.coefficient.1}
\end{split}
\end{align}
The last term in the right-hand side of  \eqref{proof.lem.coefficient.1} vanishes in virtue of the decay $|F(x)| = o (|x|^{-2})$ as $|x|\rightarrow \infty$. In fact, by extending $F$ to the whole space by zero we have
\begin{align*}
\begin{split}
\bigg| \int_\Omega e^{-\epsilon |y|^2} \epsilon y \cdot \big(F(y) y^\bot \big) (1 - \varphi) \dd y \bigg| 
& \leq
\int_{\R^2} e^{-\epsilon |y|^2} \epsilon |y|^2 |F(y)| \dd y \\
& \,=\,
\int_{\R^2} e^{-|z|^2} \big( \frac{|z|}{\epsilon^{\frac12}} \big)^2 \, \big|F \big(\frac{z}{\epsilon^{\frac12}} \big)\big| \dd z\,,
\end{split}
\end{align*}
where we have used the transformation of the variables $y = \epsilon^{-\frac12} z$. Then the Lebesgue dominated convergence theorem implies the right-hand side of the above inequality goes to zero as $\epsilon \rightarrow 0$. In particular, we have 
\begin{align}
c_\Omega [f] 
\, = \, 
\tilde c_\Omega [F] 
+ \int_\Omega \big\{ \big( y^\bot \cdot f  - F_{12} + F_{21} \big) \varphi  + y^\bot \cdot F \nabla \varphi  \big\} \dd y\,.\label{proof.lem.coefficient.2}
\end{align} 
The proof is complete.
\end{proof}

Let us denote by $T(u,p)$ the stress tensor, which is defined as 
\begin{align}
T(u,p) \, = \, D u - p \mathbb{I}\,,~~~~~ D u = \nabla u + (\nabla u )^\top\,,~~~~~\mathbb{I} = (\delta_{j k})_{1\leq j,k\leq 2}\,.
\end{align}
The next lemma is a counterpart of \cite[Theorem 2.1]{H3} in our functional setting.
We denote by $\Omega_r$ the truncated domain defined as $\Omega_r = \{x\in \Omega~|~|x|<r\}$ for $r>0$.

\begin{lemma}\label{lem.linear.exterior}
Let $\alpha \in \R \setminus \{0\}$ and $\gamma \in [0,1)$. Assume that  $f\in L^2 (\Omega)^2$ is of the form $f = {\rm div}\,F$ with some $F\in L^\infty_{2+\gamma} (\Omega)^{2\times 2}$, and that $\tilde c_\Omega [F]$ converges when $\gamma=0$.
Suppose that $(u, \nabla p)\in W^{2,2}_{loc} (\overline{\Omega})^2 \times L^2_{loc} (\overline{\Omega})^2$ is a solution to the system \eqref{Sa} satisfying  $\| \nabla u \|_{L^2 (\Omega)}<\infty$ and $\displaystyle \lim_{|x|\rightarrow \infty} |u (x)|=0$.
Then $u$ is represented as 
\begin{equation}\label{est.lem.linear.exterior.1}
u(x) \, = \beta \frac{x^{\bot}}{4\pi|x|^2} + \mathcal{R}(x)\,,  \qquad x\in \Omega\setminus \{0\}\,,
\end{equation}
where  
\begin{align}\label{est.lem.linear.exterior.3}
\begin{split}
\beta & \, = \,  \int_{\partial \Omega} y^\bot \cdot \big( T(u,p)\nu \big) \dd \sigma_{y} \,+\, b_\Omega [f]\,,\\
b_\Omega [f] & \, = \, \tilde c_\Omega [F] + \int_\Omega \big\{ \big ( y^\bot \cdot f  - F_{12} + F_{21} \big ) \varphi + y^\bot \cdot F \nabla \varphi  \big \} \dd y\,,
\end{split}
\end{align}
while $\mathcal{R}$ satisfies
\begin{align}
\begin{split}
\| \mathcal{R} \|_{L^{\infty}_{1+\gamma} (B^{{\rm c}}_{4R_{0}})}
& \le 
C \bigg ( 
\| F \|_{L^{\infty}_{2+\gamma} (B^{{\rm c}}_{2R_{0}})}
+ \sup_{|x|\geq 4 R_0} |x|^{-1 +\gamma} \| y F \|_{L^1 (\Omega_{ \frac{|x|}{2} })}  \\
&  \qquad \qquad 
+ \sup_{|x|\geq 4 R_0} \min \big \{ \frac{1}{|\alpha| |x|^{2-\gamma}}, |x|^\gamma \big \} 
\| F \|_{L^1 (\Omega_{ \frac{|x|}{2} })}  \\
& \qquad \qquad  \qquad 
+ \sup_{|x|\geq 4 R_0} |x|^\gamma \, \big | \lim_{\epsilon \rightarrow 0}  \int_{2 |y|\geq |x|} e^{-\epsilon |y|^2} (F_{12}-F_{21} ) \dd y \big | \bigg ) \\
& \quad  + C \big ( |\alpha|^{-\frac{1+\gamma}{2}} 
+ |\alpha|^{-\frac12} + 1 \big ) \big ( \| F\|_{L^2 (\Omega_{2 R_0})} + (1+|\alpha|) \| \nabla u \|_{L^2 (\Omega_{2 R_0})} \big )\,.  \label{est.lem.linear.exterior.2}
\end{split}
\end{align}
Here the constant $C$ is independent of $\gamma$, $\alpha$, and $F$. 
The coefficient $b_\Omega [f]$ coincides with $c_\Omega [f]$ when $F$ belongs in addition to $L^\infty_{2,0} (\Omega)^{2\times 2}$.
\end{lemma}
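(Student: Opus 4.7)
The plan is to reduce to the whole-plane result of Theorem \ref{thm.linear.whole}(ii) via a cutoff with a Bogovskii corrector, and then identify the coefficient $\beta$ by testing the original equation against the Killing field $y^\bot$.

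\emph{Reduction to the whole plane.} I would introduce $w\in W^{1,2}_0(\R^2)^2$ compactly supported in $A=\{R_0<|x|<2R_0\}$ with $\mathrm{div}\,w=\nabla\varphi\cdot u$ (via Bogovskii on $A$; the compatibility $\int_A\nabla\varphi\cdot u\,dy=\int_{\partial\Omega}u\cdot\nu\,d\sigma=0$ holds by $u|_{\partial\Omega}=0$ and $\mathrm{div}\,u=0$). Set $v=(1-\varphi)u+w$ and $q=(1-\varphi)p$ on $\R^2$, extended by zero across $\partial\Omega$. Then $v$ is divergence-free, $v=u$ for $|x|\geq 2R_0$, and a direct calculation gives
\begin{equation*}
\mathcal{L}_\alpha v+\nabla q=(1-\varphi)f+g\quad\text{in }\R^2,
\end{equation*}
with $g=2\nabla\varphi\cdot\nabla u+(\Delta\varphi)u+\alpha(x^\bot\cdot\nabla\varphi)u-p\nabla\varphi+\mathcal{L}_\alpha w$ supported in $\overline A$ and bounded in $L^2$ by $\|F\|_{L^2(\Omega_{2R_0})}+(1+|\alpha|)\|\nabla u\|_{L^2(\Omega_{2R_0})}$. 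Writing $(1-\varphi)f=\mathrm{div}((1-\varphi)F)+F\nabla\varphi$ puts the right-hand side in divergence form $\mathrm{div}\,\tilde F$ with $\tilde F=(1-\varphi)F+G$, where $G$ is a second Bogovskii-type antiderivative of $g+F\nabla\varphi$ compactly supported near $A$; the zero-mean compatibility follows by testing the global equation for $v$ against constant vectors and using the decay of $v,q$ at infinity. Then $\tilde F\in L^\infty_{2+\gamma}(\R^2)^{2\times 2}$, so Remark \ref{rem.thm.linear.whole} identifies $v=L[\tilde f]$, and Theorem \ref{thm.linear.whole}(ii) yields
\begin{equation*}
v(x)=\tilde c[\tilde F]\,\frac{x^\bot}{4\pi|x|^2}+\mathcal{R}[\tilde f](x),\qquad |x|\geq 4R_0,
\end{equation*}
which is \eqref{est.lem.linear.exterior.1} with $\beta=\tilde c[\tilde F]$. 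Splitting each norm in \eqref{est.thm.linear.whole.4} into its $(1-\varphi)F$-contribution (inherited from $F$ on $B_{R_0}^{\rm c}$) and its compactly supported $G$-contribution (where the three alternatives in \eqref{est.thm.linear.whole.4} collapse to $|\alpha|^{-(1+\gamma)/2},|\alpha|^{-1/2},1$ respectively) yields \eqref{est.lem.linear.exterior.2}.

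\emph{Identification of the coefficient.} To match $\beta=\tilde c[\tilde F]$ with \eqref{est.lem.linear.exterior.3}, I would integrate the equation on $\Omega\cap B_R$ against $y^\bot$. Two cancellations drive the argument: $D(y^\bot)=0$ forces $T(u,p):\nabla y^\bot=0$, so $y^\bot\cdot(-\Delta u+\nabla p)=-\mathrm{div}(T(u,p)y^\bot)$; and using $\mathrm{div}(y^\bot)=0$ one computes $y^\bot\cdot(y^\bot\cdot\nabla u-u^\bot)=\mathrm{div}((y^\bot\cdot u)y^\bot)$, whose boundary integral vanishes on $\partial\Omega$ (since $u=0$) and on $\partial B_R$ (since $y^\bot\cdot\nu=0$ there). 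Hence, with $\nu$ the outward normal to $\Omega$,
\begin{equation*}
\int_{\Omega\cap B_R}y^\bot\cdot f\,dy=-\int_{\partial\Omega}y^\bot\cdot T(u,p)\nu\,d\sigma_y-\int_{\partial B_R}y^\bot\cdot T(u,p)\nu\,d\sigma_y.
\end{equation*}
An explicit computation with the profile $u_0(y)=\beta y^\bot/(4\pi|y|^2)$ — which satisfies $\Delta u_0=0$ and $y^\bot\cdot\nabla u_0=u_0^\bot$, hence $\mathcal{L}_\alpha u_0=0$ with constant pressure — gives $\int_{\partial B_R}y^\bot\cdot T(u_0,0)\nu\,d\sigma=-\beta$ for every $R>0$. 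Interior elliptic regularity on annular shells applied to $u-u_0=\mathcal{R}[\tilde f]\in L^\infty_{1+\gamma}$ and its associated pressure yields an $o(1)$ contribution to the $\partial B_R$-integral as $R\to\infty$. Passing to the limit and rewriting the limit of $\int_{\Omega\cap B_R}y^\bot\cdot f\,dy$ in terms of $b_\Omega[f]$ via \eqref{proof.lem.coefficient.2} (which supplies the $\varphi$-dependent correction terms) gives precisely \eqref{est.lem.linear.exterior.3}.

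The main obstacle is the second Bogovskii step producing $G$: its zero-mean compatibility rests on decay of $v,q$ at infinity, available only \emph{a posteriori}. A cleaner implementation is to carry out the whole argument first for the regularized system \eqref{RSea}, whose solution $u_\lambda$ decays rapidly by Lemma \ref{lem.thm.linear.exterior.1}, and then pass to the limit $\lambda\to 0$ using $\lambda$-uniform bounds drawn from Lemma \ref{lem.thm.linear.whole.1}. A secondary subtlety is the borderline case $\gamma=0$: there the remainder $\mathcal{R}[\tilde f]$ has only $|x|^{-1}$ decay and the $\partial B_R$-identification must be justified via the Gaussian regularization $e^{-\epsilon|y|^2}$ combined with the hypothesis that $\tilde c_\Omega[F]$ converges.
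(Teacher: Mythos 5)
Your cutoff-with-Bogovskii reduction to $\R^2$ is the same one the paper uses, and your identification of $\beta$ by testing the original equation against $y^\bot$ over $\Omega\cap B_R$ is a valid alternative to the paper's cutoff-$\phi_N$ computation of $\tilde c[\mathcal{F}]+c[g]$ (both are driven by $D(y^\bot)=0$ and the symmetry of the stress tensor, together with the identity $y^\bot\cdot(x^\bot\cdot\nabla u-u^\bot)=\mathrm{div}((y^\bot\cdot u)y^\bot)$; your version trades the annular boundary terms of $\phi_N$ for a single $\partial B_R$ flux computed on the model profile $\beta y^\bot/(4\pi|y|^2)$). So the conceptual skeleton matches.

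The genuine gap you flag is real, but the fix is much simpler than the $\lambda$-regularization you propose, and you have it in hand already: Theorem \ref{thm.linear.whole} has \emph{two} parts, one for a compactly supported $L^2$ force (part (i)) and one for a force in divergence form $\mathrm{div}\,F$ with $F\in L^\infty_{2+\gamma}$ (part (ii)). You should therefore not try to put the entire right-hand side into divergence form. Write it as $\mathrm{div}\,\mathcal{F}+g$ with $\mathcal{F}=(1-\varphi)F-\nabla\mathbb{B}[\nabla\varphi\cdot u]$ (the $-\nabla\mathbb{B}[\cdot]$ term absorbs the trivial Laplacian piece $-\Delta\mathbb{B}[\cdot]=\mathrm{div}(-\nabla\mathbb{B}[\cdot])$), and leave the remaining commutator $g$ — supported in $\overline A$ and controlled in $L^2$ by $\|F\|_{L^2(\Omega_{2R_0})}+(1+|\alpha|)\|\nabla u\|_{L^2(\Omega_{2R_0})}$ — as a compactly supported $L^2$ force. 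By Remark \ref{rem.thm.linear.whole} any tempered solution of the whole-plane system is $L[\cdot]$ up to polynomials, so $v=L[\mathrm{div}\,\mathcal{F}]+L[g]$; applying part (ii) to the first summand and part (i) to the second yields $\beta=\tilde c[\mathcal{F}]+c[g]$ and the estimate \eqref{est.lem.linear.exterior.2} directly, with the $|\alpha|^{-(1+\gamma)/2}$ factor coming from part (i) applied to $g$ and the $|\alpha|^{-1/2}$ factor from the $\min$-term of part (ii) applied to $\|\mathcal{F}\|_{L^1(B_{2R_0})}$. No second Bogovskii antiderivative $G$ is needed, hence no zero-mean compatibility to verify — a condition that, as you note, would rest on as-yet-unproved decay of $v$ and $q$, and which is in fact delicate: integrating $\mathcal{L}_\alpha v+\nabla q$ against a constant over $B_R$ leaves a term $\alpha\int_{B_R}v^\bot$ whose convergence as $R\to\infty$ is exactly at the borderline when $\gamma<1$. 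Bypassing the problem is therefore the right move.
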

%
\begin{proof}
We may assume that $\int_{\Omega_{2 R_0}} p \dd x =0$. Let $\varphi\in C_0^\infty (\R^2)$ be a cut-off function introduced at the beginning of this subsection. We introduce the Bogovskii operator $\mathbb{B}$ in the closed annulus $A=\{x\in \R^2~|~ R_0\leq |x|\leq 2 R_0\}$, and set 
\begin{align*}
v \,=\, (1-\varphi )u + \mathbb{B} [\nabla \varphi \cdot u]\,, \qquad q \,=\, (1-\varphi ) p \,.
\end{align*}
Note that $\mathbb{B}[\nabla \varphi \cdot u]$ satisfies 
\begin{align}
{\rm supp}\, \mathbb{B}[\nabla \varphi \cdot u] \subset A\,, \qquad 
{\rm div}\, \mathbb{B} [\nabla \varphi \cdot u] \,=\, \nabla \varphi \cdot u\,,
\label{proof.lem.linear.exterior.1}
\end{align}
and the estimates
\begin{equation}
\|\mathbb{B} [\nabla \varphi \cdot u]\|_{W^{m+1,2} (\Omega)} 
\leq C \| \nabla \varphi \cdot u \|_{W^{m,2} (\Omega)}\,, \qquad m=0,1\,.
\label{proof.lem.linear.exterior.2}
\end{equation}
See, e.g. Borchers and Sohr \cite{BS}. Then $(v, \nabla q)$ satisfies
\begin{equation}\label{proof.lem.linear.exterior.3}
 -\Delta v - \alpha ( x^\bot \cdot \nabla v - v^\bot ) + \nabla q  \,=\,  {\rm div}\, \mathcal{F} + g\,, 
\quad  {\rm div}\, v \,=\, 0\,,  \qquad   x \in \R^2\,, 
\end{equation}
where $\mathcal{F}$ and $g$ are the functions on $\R^2$ given by
\begin{align*}
\mathcal{F} &\,=\, (1-\varphi ) F - \nabla \mathbb{B}[\nabla \varphi \cdot u]\,,\\
g & \,=\, F \cdot \nabla \varphi + 2 \nabla \varphi \cdot \nabla u + (\Delta \varphi + \alpha x^\bot \cdot \nabla \varphi ) u \\
& \quad 
- \alpha \big ( x^\bot \nabla  \mathbb{B}[\nabla \varphi \cdot u] - \mathbb{B}[\nabla \varphi \cdot u]^\bot  \big ) - ( \nabla \varphi ) p \,.
\end{align*}
Note that ${\rm supp}\, g \subset A$ due to \eqref{proof.lem.linear.exterior.1}.  Recalling the uniqueness result stated in Remark \ref{rem.thm.linear.whole},  we find
\begin{align}
u(x)  
\,=\, v(x) 
&\, = \, L[{\rm div}\, \mathcal{F}] + L[g]  \nonumber \\
& \, = \,  \big( \tilde c [\mathcal{F}] + c[g] \big) \frac{x^{\bot}}{4\pi |x|^2}  + \mathcal{R}(x) \,,  
\qquad |x|\geq 4 R_0\,,
\label{proof.lem.linear.exterior.4}
\end{align}
where  $\tilde c [\mathcal{F}]$ and $c[g]$ are defined in \eqref{def.c.tildec}. Recalling that $R_0\geq 1$, we see from Theorem \ref{thm.linear.whole} that $\mathcal{R}(x)$ satisfies
\begin{equation*}
\begin{split}
\| \mathcal{R} \|_{L^{\infty}_{1+\gamma} (B^{{\rm c}}_{4R_{0}})}
& \le
C \big( 
\| \mathcal{R} [{\rm div}\, \mathcal{F} ] \|_{L^{\infty}_{1+\gamma} (B^{{\rm c}}_{4R_{0}})}
+ \| \mathcal{R} [g] \|_{L^{\infty}_{1+\gamma} (B^{{\rm c}}_{4R_{0}})}
\big) \\
& \le 
C \bigg ( 
\| F \|_{L^{\infty}_{2+\gamma}(B^{{\rm c}}_{2R_{0}})}  + \sup_{|x|\geq 4 R_0} |x|^{-1 +\gamma} \| y F \|_{L^1 (\{2 R_0\leq |y|\leq \frac{|x|}{2}\})}  \\
&  \quad 
+ \sup_{|x|\geq 4 R_0} \min \big \{  \frac{1}{|\alpha| |x|^{2-\gamma}}, |x|^\gamma \big \} \| F \|_{L^1 (\{2R_0 \leq |y| \leq  \frac{|x|}{2} \})}  \\
& \qquad 
+ \sup_{|x|\geq 4 R_0} |x|^\gamma \, \big | \lim_{\epsilon \rightarrow 0}  \int_{2 |y|\geq |x|} e^{-\epsilon |y|^2} (F_{12}-F_{21} ) \dd y \big | \\
& \quad \qquad
+ \big( \sup_{|x|\geq 4 R_0} \min \big \{ \frac{1}{|\alpha| |x|^{2-\gamma}}, |x|^\gamma \big \}  + 1 \big) 
\| \mathcal{F} \|_{L^1 (B_{2R_0})} \bigg) \\
& \quad 
+ C (|\alpha|^{-\frac{1+\gamma}{2}} + 1) \|g\|_{L^1(B_{2R_0})}\,.
\end{split}
\end{equation*}
Here $C$ depends only on $R_0$. It is easy to see
\begin{align*}
\| \mathcal{F} \|_{L^1(B_{2R_0})} 
& \leq C \big ( \| F \|_{L^2 (\Omega_{2R_0})} + \| \nabla u \|_{L^2 (\Omega_{2 R_0})} \big )
\end{align*}
by applying  \eqref{proof.lem.linear.exterior.2} and the Poincar${\rm \acute{e}}$ inequality. Similarly, the function $g$ is estimated as 
\begin{align*}
\| g\|_{L^1(B_{2R_0})} 
& \leq C \big( \| F \|_{L^{2}(\Omega_{2 R_0})} + (1 + |\alpha|) \| \nabla u \|_{L^{2}(\Omega_{2 R_0})}
+ \| p \|_{L^{2}(\Omega_{2 R_0})}  \big) \,.
\end{align*}
In order to estimate the pressure term let us recall the condition
$\int_{\Omega_{2 R_0}} p \dd x = 0$, which yields from \eqref{Sa},
\begin{equation*}
\begin{split}
\| p \|_{L^2 (\Omega_{2 R_0})} 
\leq C \| \nabla p \|_{H^{-1} (\Omega_{2 R_0})}
& \,=\, C \| {\rm div}\, [ F + \nabla u 
+ \alpha ( u \otimes x^\bot - x^\bot \otimes u ) ] \|_{H^{-1} (\Omega_{2 R_0})} \\
& \le C \big ( \| F \|_{L^2 (\Omega_{2 R_0})} + (1 + |\alpha|) \| \nabla u \|_{L^{2}(\Omega_{2 R_0})} \big ) \,,
\end{split}
\end{equation*}
where $H^{-1} (\Omega_{2 R_0})$ is the topological dual of $W^{1,2}_0 (\Omega_{2 R_0})$.
Collecting these estimates, we obtain \eqref{est.lem.linear.exterior.2}.

Finally let us determine the coefficient $\beta$ in \eqref{est.lem.linear.exterior.1}. In view of \eqref{proof.lem.linear.exterior.4} it suffices to compute $\tilde c[\mathcal{F}] + c[g]$. We follow the argument in the proof of \cite[Theorem 2.1]{H3}. Fix $N\geq 2R_0$ and let $\phi_N\in C_0^\infty (\R^2)$ be a radial cut-off function such that $\phi_N(x)=1$ for $|x|\leq N$ and $\phi_N (x) =0$ for $|x|\geq 2N$. Then we have 
\begin{align}
\tilde c[\mathcal{F}] + c[g]  
& \, = \, \lim_{\epsilon \rightarrow 0} \int_{\R^2} e^{-\epsilon |y|^2} ( F_{12}-F_{21}) (1-\phi_N) \dd y  \nonumber \\
& ~~~ +  \int_{\R^2}  ( \mathcal{F}_{12} - \mathcal{F}_{21}) \phi_N \dd y + \int_{\R^2} y^\bot \cdot g \phi_N \dd y \nonumber \\
& \, = \, \tilde c_\Omega [F] - \int_\Omega (F_{12}-F_{21}) \phi_N \dd y +   \int_{\R^2}  ( \mathcal{F}_{12} - \mathcal{F}_{21}) \phi_N \dd y + \int_{\R^2} y^\bot \cdot g \phi_N \dd y\,.\label{proof.lem.linear.exterior.5}
\end{align}
We set $S(v, q)(x) = T(v, q)(x) + \alpha(v \otimes x^\bot - x^\bot \otimes v)$.
Since ${\rm div}\, \mathcal{F}  + g = - {\rm div}\, S(v, q)=(-\sum_{j=1,2} \partial_j S_{1j} (v,q), -\sum_{j=1,2} \partial_j S_{2j} (v,q))^\top$ in $\R^2$, the integration by parts and the symmetry of $T(v,q)$ yield
\begin{align}
\int_{\R^2} y^\bot \cdot g \phi_N \dd y & \, = \,  -\int_{\R^2} \phi_N    y^\bot \cdot  {\rm div}\, S(v, q) \dd y  - \int_{\R^2}  \phi_N  y^\bot \cdot  {\rm div}\, \mathcal{F}   \dd y \nonumber \\
& \, = \, 
2 \int_{\R^2} \phi_N y \cdot v \dd y + \int_{\R^2} y^\bot \cdot S(v,q) \nabla \phi_N \dd y \nonumber \\
& \quad  
- \int_{\R^2} (\mathcal{F}_{12}-\mathcal{F}_{21}) \phi_N \dd y + \int_{\R^2} y^\bot  \cdot  \mathcal{F} \nabla \phi_N  \dd y \nonumber \\
& \, = \, \int_{\R^2} y^\bot \cdot S(v,q) \nabla \phi_N \dd y  \nonumber \\
& \quad  
- \int_{\R^2} (\mathcal{F}_{12}-\mathcal{F}_{21}) \phi_N \dd y + \int_{\R^2} y^\bot  \cdot  \mathcal{F} \nabla \phi_N  \dd y \,.\label{proof.lem.linear.exterior.6}
\end{align}
Here we have used the fact that $\phi_N$ is radial, and thus, $y \phi_N (y) = \nabla_y \big(\int_{|y|}^{\infty} r \tilde \phi_N (r) \dd r \big)$, where $\tilde \phi_N (r)$ is such that $\tilde \phi_N (|y|)=\phi_N(y)$.
Since $S(v, q) = S(u, p)$ for $|x| \geq 2 R_0$
and $-{\rm div}\,S(u, p) =  f$ in $\Omega$, again from the integration parts we have
\begin{align}
& \int_{\R^2} y^\bot \cdot S(v, q)  \nabla \phi_N \dd y \nonumber \\
& \, = \, \int_{\Omega} y^\bot \cdot S(u, p) \nabla \phi_N \dd y  \nonumber \\
& \, = \, \int_{\partial \Omega} y^\bot \cdot S (u, p) \nu \dd \sigma_{y} 
- 2\int_\Omega \phi_N y \cdot u \dd y + \int_{\Omega} \phi_N y^\bot \cdot f \dd y \nonumber \\
& \, = \, \, \int_{\partial \Omega} y^\bot \cdot T(u, p) \nu \dd \sigma_{y} + \int_{\Omega} \phi_N y^\bot \cdot f \dd y\,.\label{proof.lem.linear.exterior.7}
\end{align}
Here we have used the boundary condition $u=0$ on $\partial \Omega$ and also the radial symmetry of $\phi_N$. By taking the cut-off function $\varphi$ above, and using the relation $\varphi \phi_N = \varphi$, we then compute the second term in the above as
\begin{align}
\int_{\Omega} \phi_N y^\bot \cdot f \dd y 
& \, = \, 
\int_{\Omega} \varphi y^\bot \cdot f \dd y  
+ \int_{\Omega} \phi_N (1-\varphi) y^\bot \cdot f \dd y  \nonumber \\
\begin{split}
&  
\, = \, \int_{\Omega} \varphi y^\bot \cdot f \dd y 
+ \int_\Omega (F_{12}-F_{21} ) \phi_N  \dd y 
- \int_\Omega (F_{12}-F_{21} ) \varphi \dd y 
\\
& \quad - \int_\Omega  y^\bot \cdot F \nabla \phi_N \dd y 
+ \int_\Omega y^\bot \cdot F \nabla \varphi \dd y\,.\label{proof.lem.linear.exterior.8}
\end{split}
\end{align}
Collecting \eqref{proof.lem.linear.exterior.5} - \eqref{proof.lem.linear.exterior.8} and using $\mathcal{F}=F$ for $|x|\geq 2R_0$, we obtain 
\begin{align}
\begin{split}
\tilde c[F] + c[g]  & \, = \, \int_{\partial\Omega} y^\bot \cdot T(u,p) \nu \dd \sigma_y \\
&  \quad  
+ \tilde c_\Omega [F] 
+  \int_\Omega \big\{ (y^\bot \cdot f - F_{12} + F_{21} ) \varphi + y^\bot \cdot  F \nabla \varphi \big\} \dd y\,,\label{proof.lem.linear.exterior.9}
\end{split}
\end{align}
as desired. When $F \in L^\infty_{2,0} (\Omega)^{2\times 2}$ the coefficient $b_\Omega [f]$ coincides with $c_\Omega [f]$ in virtue of \eqref{proof.lem.coefficient.2}. The proof is complete.
\end{proof}

Let us recall that $R_0\geq 1$ is taken so that $\R^2\setminus \Omega \subset B_{R_0}$. Let $\varphi\in C_0^\infty (\Omega)$ be a radial cut-off function such that $\varphi (x) =1$ for $|x|\leq R_0$ and $\varphi (x)=0$ for $|x|\geq 2 R_0$. Then we set 
\begin{align}
V(x) \, = \, \big(1-\varphi (x) \big) \frac{x^\bot}{4\pi |x|^2}\,.\label{def.V}
\end{align}
Note that $V$ is a radial circular flow satisfying ${\rm div}\, V=0$, which describes the asymptotic behavior of solutions to the Stokes system \eqref{a} as is shown in Theorem \ref{thm.linear.whole}. The main result of this section is stated as follows.
\begin{theorem}\label{thm.linear.exterior}
Let $\alpha \in \R \setminus \{0\}$ and $\gamma \in [0,1)$. Suppose that $f \in L^2 (\Omega)^2$ is of the form $f={\rm div}\, F$ with $F\in L^\infty_{2+\gamma} (\Omega)^{2\times 2}$. Assume in addition that $\tilde c_\Omega [F]$ converges when $\gamma=0$. Then there exists a unique  solution $(u, \nabla p)\in W^{2,2}_{loc} (\overline{\Omega})^2 \times L^2_{loc} (\overline{\Omega})$ to \eqref{Sa} satisfying \,$\displaystyle\lim_{|x|\rightarrow \infty} |u(x)|=0$ and
\begin{align}
\| \nabla u \|_{L^2 (\Omega)} & \leq \| F \|_{L^2 (\Omega)}\,, \label{est.thm.linear.exterior.0}\\
\| p \|_{L^2 (\Omega_{6 R_0})} & \leq C (1+|\alpha|) \| F\|_{L^2 (\Omega)}\,, \label{est.thm.linear.exterior.1}\\
\| \nabla^2 u \|_{L^2 (\Omega_{k R_0})}  + \| \nabla p \|_{L^2 (\Omega_{k R_0})} 
&\leq C (1+|\alpha|) \big ( \| F\|_{L^2 (\Omega)} + \| f\|_{L^2 (\Omega_{(k+1) R_0})}\big )\,,~~~2\leq k\leq 5\,.
\label{est.thm.linear.exterior.2}
\end{align}
\noindent
Moreover, the velocity $u$ is written as  
\begin{equation}\label{est.thm.linear.exterior.3}
u(x)  \, =\, \beta V(x)  + \mathcal{R}_\Omega [f](x)\,,  \qquad x\in \Omega\,,
\end{equation}
where $\beta\in \R$ is given by 
\begin{align}\label{est.thm.linear.exterior.5}
\begin{split}
\beta & \, = \,  \int_{\partial \Omega} y^\bot \cdot \big( T(u,p)\nu \big) \dd \sigma_{y} + b_\Omega [f]\,,\\
b_\Omega [f] & \, = \, \tilde c_\Omega [F] 
+ \int_\Omega 
\big\{ \big ( y^\bot \cdot f  - F_{12} + F_{21} \big ) \varphi  +  y^\bot\cdot F \nabla \varphi  \big\} \dd y\,,
\end{split}
\end{align}
while $\mathcal{R}_\Omega [f]$ satisfies 
\begin{align}
\begin{split}
\| \mathcal{R}_\Omega [f] \|_{L^{\infty}_{1+\gamma} (B^{{\rm c}}_{4R_{0}})}
& \le 
C \bigg ( 
\| F \|_{L^{\infty}_{2+\gamma} (B^{{\rm c}}_{2R_{0}})}
+ \sup_{|x|\geq 4 R_0} |x|^{-1 +\gamma} \| y F \|_{L^1 (\Omega_{ \frac{|x|}{2} })}  \\
&  \qquad 
+ \sup_{|x|\geq 4 R_0} \min \big \{ \frac{1}{|\alpha| |x|^{2-\gamma}}, |x|^\gamma \big \} 
\| F \|_{L^1 (\Omega_{ \frac{|x|}{2} })}  \\
& \quad \qquad  
+ \sup_{|x|\geq 4 R_0} |x|^\gamma \, \big | \lim_{\epsilon \rightarrow 0}  \int_{2 |y|\geq |x|} e^{-\epsilon |y|^2} (F_{12}-F_{21} ) \dd y \big | \bigg ) \\
&  
+ C \big ( |\alpha|^{-\frac{1+\gamma}{2}} + |\alpha|^{-\frac12} + 1 \big ) (1+|\alpha|) \| F\|_{L^2 (\Omega)} \,.
\label{est.thm.linear.exterior.4} 
\end{split}
\end{align}
Here the constant $C$ is independent of $\gamma$, $\alpha$, and $F$. If $F\in L^\infty_{2,0} (\Omega)^{2\times 2}$ then the coefficient $b_\Omega [f]$ coincides with $c_\Omega [f]$.
\end{theorem}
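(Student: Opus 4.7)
The plan is to construct the solution via a $\lambda$-regularization scheme in the exterior domain, paralleling the approach of \cite{H3} but adapted to the refined constants appearing in our linear estimates. First, for each $\lambda>0$ I consider the system obtained by adding $\lambda u_\lambda$ to the left-hand side of \eqref{Sa}, together with the homogeneous Dirichlet condition $u_\lambda|_{\partial\Omega}=0$. The associated bilinear form on $\dot{W}^{1,2}_{0,\sigma}(\Omega)$ is coercive because $\int_\Omega (x^\bot\cdot\nabla u_\lambda - u_\lambda^\bot)\cdot u_\lambda\,\dd x = 0$ (use the identity $\nabla\cdot x^\bot = 0$ together with the boundary condition), so the Lax--Milgram theorem produces a unique variational solution $(u_\lambda,\nabla p_\lambda)$. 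Testing against $u_\lambda$ gives the $\lambda$-uniform energy bound $\lambda\|u_\lambda\|_{L^2(\Omega)}^2 + \|\nabla u_\lambda\|_{L^2(\Omega)}^2 \leq \|F\|_{L^2(\Omega)}^2$, which yields \eqref{est.thm.linear.exterior.0} in the limit. Standard interior Stokes estimates on the truncated domains $\Omega_{kR_0}$, combined with the observation that $\nabla p_\lambda = -\lambda u_\lambda + \Delta u_\lambda + \alpha(x^\bot\cdot\nabla u_\lambda - u_\lambda^\bot) + {\rm div}\, F$, then give the local bounds \eqref{est.thm.linear.exterior.1}--\eqref{est.thm.linear.exterior.2} uniformly in $\lambda$, where the pressure is normalised by $\int_{\Omega_{2R_0}} p_\lambda\,\dd x=0$.

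Second, to extract uniform decay at infinity, I mimic the cut-off argument from the proof of Lemma \ref{lem.linear.exterior}: set $v_\lambda = (1-\varphi)u_\lambda + \mathbb{B}[\nabla\varphi\cdot u_\lambda]$, where $\mathbb{B}$ is the Bogovskii operator on the annulus $\{R_0\leq|x|\leq 2R_0\}$, and extend by zero to $\R^2$. Then $(v_\lambda,\nabla q_\lambda)$ solves \eqref{RSea} in $\R^2$ with forcing ${\rm div}\, \mathcal{F}_\lambda + g_\lambda$, where $\mathcal{F}_\lambda = (1-\varphi)F - \nabla\mathbb{B}[\nabla\varphi\cdot u_\lambda]$ inherits the $L^\infty_{2+\gamma}$ decay of $F$ and $g_\lambda$ is compactly supported in the annulus. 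Applying Lemma \ref{lem.thm.linear.exterior.1} to the ${\rm div}\, \mathcal{F}_\lambda$ contribution and the kernel estimate from Lemma \ref{lem.thm.linear.whole.1} to the compactly supported $g_\lambda$ piece produces a bound on $\|u_\lambda\|_{L^\infty_\theta(B^c_{2R_0})}$ for some $\theta\in(0,1)$ that is uniform in $\lambda$, with right-hand side controlled by $\|F\|_{L^\infty_{2+\gamma}(\Omega)}$ and the energy quantities already estimated.

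Third, passage to the limit $\lambda\to 0$ is by a standard compactness argument: weak convergence in $\dot{W}^{1,2}$, weak-$\star$ convergence in $L^\infty_\theta(B^c_{2R_0})$, and local strong convergence in $L^2_{loc}$ extract a subsequential limit $(u,\nabla p)$ which solves \eqref{Sa} and satisfies $\lim_{|x|\to\infty}|u(x)|=0$ since $\theta>0$. All assumptions of Lemma \ref{lem.linear.exterior} are then met, and applying that lemma yields the decomposition \eqref{est.thm.linear.exterior.3}, the coefficient formula \eqref{est.thm.linear.exterior.5}, and the remainder estimate \eqref{est.thm.linear.exterior.4} upon inserting the energy bound for $\|\nabla u\|_{L^2(\Omega_{2R_0})}$ into \eqref{est.lem.linear.exterior.2}. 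The identification $b_\Omega[f]=c_\Omega[f]$ when $F\in L^\infty_{2,0}(\Omega)^{2\times 2}$ is the content of \eqref{proof.lem.coefficient.2}.

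For uniqueness, let $(u,p)$ be the difference of two solutions, so that $u$ solves the homogeneous equation with $u|_{\partial\Omega}=0$, $u\to 0$ at infinity, and $\nabla u\in L^2(\Omega)$. Lemma \ref{lem.linear.exterior} applied with $F=0$ gives $|u(x)|\lesssim|x|^{-1}$, and interior regularity on dyadic annuli upgrades this to $|\nabla u(x)|\lesssim|x|^{-2}$ and $|p(x)-p_\infty|\lesssim|x|^{-1}$ for a suitable constant $p_\infty$. Multiplying the homogeneous equation by $u$ and integrating over $\Omega_R$, the rotation contribution $\int_{\Omega_R}x^\bot\cdot\nabla|u|^2\,\dd x$ collapses to a boundary integral on $\partial B_R$ that vanishes exactly because $x^\bot\cdot\nu=0$ there, while the remaining boundary integrals on $\partial B_R$ decay as $R^{-1}$; letting $R\to\infty$ forces $\|\nabla u\|_{L^2(\Omega)}=0$, whence $u\equiv 0$ by the Dirichlet condition. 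The main technical obstacle is Step 2, where the cut-off must be arranged so that any $|\alpha|$-factors introduced by the rotation acting on $\mathbb{B}[\nabla\varphi\cdot u_\lambda]$ are absorbed into the milder singularity $|\alpha|^{-(1+\gamma)/2}$ of \eqref{est.thm.linear.exterior.4} rather than the cruder $|\alpha|^{-1}$ that a direct application of \cite[Proposition 3.1]{H3} would yield.
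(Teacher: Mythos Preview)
Your existence construction mirrors the paper's: the $\lambda$-regularized exterior problem is solved by Lax--Milgram with the $\lambda$-uniform energy bound \eqref{proof.thm.linear.exterior.1}; the cut-off $v_\lambda=(1-\varphi)u_\lambda+\mathbb{B}[\nabla\varphi\cdot u_\lambda]$ reduces matters to the whole-plane problem \eqref{RSea}; Lemma \ref{lem.thm.linear.exterior.1} supplies the $\lambda$-uniform $L^\infty_\theta$ decay needed to pass to the limit; and finally Lemma \ref{lem.linear.exterior} applied to the limit yields \eqref{est.thm.linear.exterior.3}--\eqref{est.thm.linear.exterior.5} and \eqref{est.thm.linear.exterior.4}. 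One small correction: for the compactly supported piece $g_\lambda$ you cannot invoke Lemma \ref{lem.thm.linear.whole.1} directly, since that concerns $\Gamma_\alpha$ rather than the regularized kernel $\Gamma_\alpha^\lambda$; the paper uses \cite[Proposition 3.3]{H3} here, whose constant depends on $\alpha$ but not on $\lambda$, and that is all one needs at the approximation stage. Relatedly, your closing remark misplaces the ``main technical obstacle'': the refined $|\alpha|$-dependence is irrelevant in the $\lambda$-limit step (a crude $C_\alpha$ suffices there) and enters only when Lemma \ref{lem.linear.exterior} is applied to the limit $u$.

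For uniqueness the paper simply cites \cite[Theorem 2.1, (2.8)]{H3}, which gives $\int_\Omega|Dv|^2\,\dd x=0$ for the difference $v$ directly. Your direct energy argument over $\Omega_R$ is reasonable in spirit but contains a gap: the assertion that interior regularity on dyadic annuli upgrades $|u|\lesssim|x|^{-1}$ to $|\nabla u|\lesssim|x|^{-2}$ does not follow from the standard scaling. Setting $v(y)=u(Ry)$ and $q(y)=Rp(Ry)$, the rescaled homogeneous system on $\{1\le|y|\le 2\}$ reads $-\Delta v+\nabla q=\alpha R^2\,(y^\bot\cdot\nabla v-v^\bot)$, so the source acquires a factor $R^2$ and the Stokes interior estimate yields only $|\nabla u|=O(1)$. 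This can be repaired---for instance by invoking Lemma \ref{lem.linear.exterior} with $F=0$ and $\gamma>0$ to get $u\cdot\nu=O(|x|^{-1-\gamma})$ on $\partial B_R$, noting that the pressure is harmonic (take the divergence of the equation), and replacing the pointwise gradient bound by an averaging argument over $R\in[N,2N]$ exploiting $\nabla u\in L^2(\Omega)$---but as written the step is not justified.
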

%
\begin{proof} We follow the argument of \cite[Theorem 2.2]{H3}. Since the argument is quite parallel to it, we only give the outline here. (Uniqueness) Let $(u,\nabla p),\, (u', \nabla p')\in W^{2,2}_{loc} (\overline{\Omega})^2\times L^2_{loc} (\overline{\Omega})^2$ be solutions to \eqref{Sa} with the same $f$ such that $\| \nabla u\|_{L^2 (\Omega)}$ and $\|\nabla u'\|_{L^2 (\Omega)}$ are finite and 
$|u(x)|+|u'(x)| \rightarrow 0$ as $|x|\rightarrow \infty$. Then the difference $(v, \nabla q) =(u-u', \nabla (p-p'))\in W^{2,2}_{loc}(\overline{\Omega})^2\times L^2_{loc}(\overline{\Omega})^2$ solves \eqref{Sa} with $f=0$ and satisfies $\|\nabla v\|_{L^2 (\Omega)}<\infty$ as well as $|v(x)|\rightarrow 0$ as $|x|\rightarrow \infty$. Moreover,  the standard elliptic regularity of the Stokes operator implies that $(v, \nabla q)$ is smooth in $\Omega$. Then we can apply \cite[Theorem 2.1, (2.8)]{H3}, which gives $\int_\Omega |D v|^2 \dd x=0$. Hence $v$ is the rigid motion, but the condition $v=0$ on the boundary leads to $v=0$ in $\Omega$. Then we obtain $\nabla q=0$ from the equation. The proof of the uniqueness is complete.
(Existence) Firstly we consider the regularized system 
\begin{equation}\tag{S$_{\alpha}^\lambda$}\label{Sae}
  \left\{\begin{aligned}
 \lambda u_\lambda -\Delta u_\lambda - \alpha ( x^\bot \cdot \nabla u_\lambda - u_\lambda^\bot ) + \nabla p_\lambda  &\,=\, f \,, 
  \quad    {\rm div}\, u_\lambda \,=\, 0\,,  ~~~~~~   x \in \Omega\,, \\
 u_\lambda  & \, =\,  0   \,,  ~~~~~~ x\in \partial\Omega\,, \\
 u_\lambda  & \, \rightarrow \,     0   \,,  ~~~~~~ |x| \rightarrow \infty\,.
\end{aligned}\right.
\end{equation}
Here $\lambda$ is a small positive number. For \eqref{Sae} one can show the existence of the solution $(u_\lambda, \nabla p_\lambda)$ satisfying $\int_{\Omega_{2R_0}} p_\lambda \dd x=0$ and the energy estimate
\begin{align}\label{proof.thm.linear.exterior.1}
\lambda \| u_\lambda \|_{L^2 (\Omega)}^2 + \frac12 \| \nabla u_\lambda \|_{L^2 (\Omega)}^2 \leq \frac12 \| F \|_{L^2 (\Omega)}^2\,.
\end{align}
Moreover, the assumption $f\in L^2 (\Omega)^2$ and the elliptic regularity for the Stokes operator imply the regularity $u_\lambda \in W^{2,2}_{loc} (\overline{\Omega})^2$, $\nabla p_\lambda \in L^2_{loc} (\overline{\Omega})^2$, 
where in virtue of \eqref{proof.thm.linear.exterior.1} each seminorm of $W^{2,2}_{loc} (\overline{\Omega})$ can be bounded uniformly in $\lambda \in (0,1)$. Indeed, since $(u_\lambda, p_\lambda)$ solves the Stokes system with the source term $f + \alpha ( x^\bot \cdot \nabla u_\lambda - u_\lambda^\bot)$, for any bounded subdomain $\omega \subset \Omega$, there exists $\rho>0$ with $\omega \subset \Omega_{\rho}$ such that
\begin{align*}
\|u_\lambda\|_{W^{2,2}(\omega)}
\le C(\|f\|_{L^{2}(\Omega)} 
+ \|\nabla u_\lambda\|_{L^{2}(\Omega)} 
+ \|u_\lambda\|_{L^{2}(\Omega_{\rho})})\,,
\end{align*}
where the constant $C$ depends on $\Omega$, $R_{0}$, $\omega$, and $\rho$; see  \cite[page 117, Theorem 1.5.1]{So} for the proof. From \eqref{proof.thm.linear.exterior.1} and the Poincar${\rm \acute{e}}$ inequality $\|u_\lambda\|_{L^{2}(\Omega_{\rho})} \le C_{\rho} \|\nabla u_\lambda\|_{L^{2}(\Omega)}$ with $C_{\rho}$ depending only on $\Omega$ and $\rho$, we obtain the bound of $u_\lambda$ in $W^{2,2}(\omega)$ which is independent of $\lambda$. 
Let us recall that $R_0\geq 1$ is taken so that 
$\R^2\setminus \Omega\subset B_{R_0}$ and $\varphi\in C_0^\infty (\R^2)$ is
a radial cut-off function such that $\varphi (x) =1$ for $|x|\leq R_0$ and $\varphi (x) =0$ for $|x|\geq 2R_0$. As in the proof of Lemma \ref{lem.linear.exterior}, we introduce the Bogovskii operator $\mathbb{B}$ in the closed annulus $A=\{x\in \R^2~|~ R_0\leq |x|\leq 2 R_0\}$, and set 
\begin{align*}
v_\lambda = (1-\varphi )u_\lambda + \mathbb{B} [\nabla \varphi \cdot u_\lambda]\,,\qquad
q_\lambda = (1-\varphi ) p_\lambda\,.
\end{align*}
Recall that $\mathbb{B}[\nabla \varphi \cdot u_\lambda]$ satisfies 
\begin{align}
& {\rm supp}\, \mathbb{B}[\nabla \varphi \cdot u_\lambda] \subset A\,, \qquad {\rm div}\, \mathbb{B} [\nabla \varphi \cdot u_\lambda] \, = \, \nabla \varphi \cdot u_\lambda\,,\label{proof.thm.linear.exterior.2}\\
& \|\mathbb{B} [\nabla \varphi \cdot u_\lambda]\|_{W^{m+1,2} (\Omega)} \leq C \| \nabla \varphi \cdot u_\lambda \|_{W^{m,2} (\Omega)}\,,~~~~~m=0,1\,.\label{proof.thm.linear.exterior.3}
\end{align}
Then $(v_\lambda, \nabla q_\lambda)$ satisfies
\begin{equation}\label{proof.thm.linear.exterior.4}
  \left\{\begin{aligned}
 \lambda v_\lambda -\Delta v_\lambda - \alpha ( x^\bot \cdot \nabla v_\lambda - v_\lambda^\bot ) + \nabla q_\lambda &\,=\,  {\rm div}\, F_\lambda + g_\lambda \,, ~~~ {\rm div}\, u_\lambda \,=\, 0\,,  ~~~~~~ x \in \R^2\,, \\
 v_{\lambda}  & \, \rightarrow \,     0   \,,  ~~~~~~ |x| \rightarrow \infty\,, 
\end{aligned}\right.
\end{equation}
where 
\begin{align*}
F_\lambda &\,  = \, (1-\varphi ) F - \nabla \mathbb{B}[\nabla \varphi \cdot u_\lambda]\,,\\
g_\lambda & \, =\, F \cdot \nabla \varphi + \lambda \mathbb{B}[\nabla \varphi \cdot u_\lambda] + 2 \nabla \varphi \cdot \nabla u_\lambda + (\Delta \varphi + \alpha x^\bot \cdot \nabla \varphi ) u_\lambda \\
& ~~~ - \alpha \big ( x^\bot \nabla  \mathbb{B}[\nabla \varphi \cdot u_\lambda] - \mathbb{B}[\nabla \varphi \cdot u_\lambda]^\bot  \big ) - ( \nabla \varphi ) p_\lambda \,.
\end{align*}
Note that ${\rm supp}\, g_\lambda \subset A$ due to \eqref{proof.thm.linear.exterior.2}. Let $\Gamma_\alpha^\lambda (x,y)$ be the function defined in \eqref{def.Gamma_epsilon}. Then, as is shown in \cite{H3} (see also Remark \ref{rem.thm.linear.whole}), the velocity $v_\lambda$ is written as 
\begin{align}
v_\lambda (x) 
& \, = \, 
\int_{\R^2}  \Gamma_{\alpha}^{\lambda}(x,y) {\rm div}\, F_\lambda (y) \dd y  
+ \int_{\R^2} \Gamma_{\alpha}^{\lambda}(x,y) g_\lambda (y) \dd y \nonumber \\
&  \, = \, 
w_\lambda (x) + r_\lambda (x) \,. \label{proof.thm.linear.exterior.5}
\end{align}
Since $g_\lambda =0$ for $|x|\geq 2R_0$, we have from \cite[Proposition 3.3]{H3},
\begin{align}
\| r_\lambda \|_{L^{\infty}_{1} (B^{{\rm c}}_{4R_{0}})} 
& \leq C_\alpha \int_\Omega (1+|y| ) |g_\lambda (y) | \dd y  \nonumber \\
& \leq C_\alpha \| g_\lambda \|_{L^2 (\Omega)} \nonumber \\
& \leq C_\alpha \big ( \| F \|_{L^2 (\Omega_{2 R_0})} + (1+|\alpha|) \| \nabla u_\lambda \|_{L^2 (\Omega_{2 R_0})}  + \| p_\lambda \|_{L^2 (\Omega_{2 R_0})}  \big )\,.\label{proof.thm.linear.exterior.6}
\end{align}
Since $\int_{\Omega_{2 R_0}} p_\lambda \dd x =0$ we have from \eqref{Sae},
\begin{align*}
\| p_\lambda \|_{L^2 (\Omega_{2R_0})} \leq C \| \nabla p_\lambda \|_{H^{-1} (\Omega_{2 R_0})} 
\leq C \big ( \| F \|_{L^2 (\Omega_{2 R_0})} + (1+|\alpha|) \| \nabla u_\lambda \|_{L^2 (\Omega_{2 R_0})}  \big )
\end{align*}  
Combining this estimate with \eqref{proof.thm.linear.exterior.1} and \eqref{proof.thm.linear.exterior.6}, we obtain 
\begin{align}
\| r_\lambda \|_{L^{\infty}_{1} (B^{{\rm c}}_{4R_{0}})} 
\leq C_\alpha \| F \|_{L^2 (\Omega)}\,.\label{proof.thm.linear.exterior.7}
\end{align}
Here $C_\alpha$ depends only on $\alpha$ and $R_0$, but is independent of $\lambda \in (0,1)$. As for $w_\lambda$, from Lemma \ref{lem.thm.linear.exterior.1}, there is $0<\theta<1$ such that 
\begin{align}
\| w_\lambda \|_{L^{\infty}_{\theta} (B^{{\rm c}}_{4R_{0}})} 
& \leq 
C \big ( \| F\|_{L^\infty_{2+\gamma}(B^{{\rm c}}_{2R_{0}})} 
+ \| F_\lambda \|_{L^1(B_{2R_{0}})} \big )\nonumber \\
& \leq 
C \big ( \| F\|_{L^\infty_{2+\gamma}(B^{{\rm c}}_{2R_{0}})} 
+ \| F \|_{L^2 (\Omega)}\big ) \,.\label{proof.thm.linear.exterior.8}
\end{align}
Collecting \eqref{proof.thm.linear.exterior.1}, \eqref{proof.thm.linear.exterior.7},  \eqref{proof.thm.linear.exterior.8}, 
and $u_\lambda \in W^{2,2}_{loc} (\overline{\Omega})^2$ with its uniform bound on $\lambda \in (0,1)$,
we have a uniform estimate in $\lambda \in (0,1)$:
\begin{align}
\|u_\lambda \|_{L^\infty_{\theta} (\Omega)} 
\leq C_\alpha \big ( \| F\|_{L^\infty_{2+\gamma}(\Omega)} + \| F \|_{L^2 (\Omega)} \big )\,,\label{proof.thm.linear.exterior.9}
\end{align}
where the Sobolev embedding $W^{2,2}(\Omega_{5 R_0})\hookrightarrow L^\infty (\Omega_{5 R_0})$ has been applied. Thus, there are a subsequence, denoted again by $(u_\lambda, \nabla p_\lambda)$, and $(u,\nabla p)\in W^{2,2}_{loc} (\overline{\Omega})^2 \times L^2_{loc}(\overline{\Omega})^2$, such that $u_\lambda \rightharpoonup^* u$ in $L^{\infty}_\theta (\Omega)^2$, $\nabla u_\lambda \rightharpoonup \nabla u$ in $L^2 (\Omega)^{2\times 2}$, and $p_\lambda \rightharpoonup p$ in $W^{1,2}_{loc}(\overline{\Omega})$. It is easy to see that $(u, \nabla p)$ satisfies \eqref{Sa} in the sense of distributions 
 (note that each term of \eqref{Sa} makes sense at least as a function in $L^2_{loc}(\overline{\Omega})$). The proof of the existence is complete. 

\noindent (Estimates) We note that the solution $(u,\nabla p)$ obtained in the existence proof above satisfies 
$\| \nabla u \|_{L^2 (\Omega)} \leq \| F \|_{L^2 (\Omega)}$ in virtue of \eqref{proof.thm.linear.exterior.1}. 
Thus \eqref{est.thm.linear.exterior.0} holds.
Since the pressure $p$ is uniquely determined up to a constant, we may assume $\int_{\Omega_{6R_0}} p \dd x=0$. Then we have from \eqref{Sa},
\begin{align*}
\|  p \|_{L^2 (\Omega_{6 R_0})}   \leq C \| \nabla p  \|_{H^{-1} (\Omega_{6 R_0})} 
& \leq C \big ( \| F \|_{L^2 (\Omega_{6 R_0})} + (1+|\alpha|) \| \nabla u \|_{L^2 (\Omega_{6 R_0})} \big )\\
& \leq C (1+|\alpha|) \| F \|_{L^2 (\Omega)}\,.
\end{align*} 
Here $C$ depends only on $R_0$. This proves \eqref{est.thm.linear.exterior.1}.
The local estimates \eqref{est.thm.linear.exterior.2} follow from the standard cut-off argument and the elliptic estimates for the Stokes system in bounded domains, together with the estimates \eqref{est.thm.linear.exterior.0} and \eqref{est.thm.linear.exterior.1}. Since the argument is rather standard, we omit the details here.
The expansion \eqref{est.thm.linear.exterior.3} with \eqref{est.thm.linear.exterior.5}  and the estimate \eqref{est.thm.linear.exterior.4} follow from Lemma \ref{lem.linear.exterior} and \eqref{est.thm.linear.exterior.0}.
Note that the constant vector $u_\infty$ in \eqref{est.lem.linear.exterior.1} must be zero, for the solution $u$ constructed here decays as $|x|\rightarrow \infty$. The proof of Theorem \ref{thm.linear.exterior} is complete.
\end{proof}

\begin{remark} 
Let $R_0\geq 1$ be as in Theorem \ref{thm.linear.exterior} and let $\gamma \in  [0, 1)$. Then we have for $|x|\geq 4 R_0$,
\begin{align*}
\| y F \|_{L^1(\Omega_{ \frac{|x|}{2}})} 
& \leq 
\frac{C}{1-\gamma} 
|x|^{1-\gamma} \| F \|_{L^\infty_{2+\gamma} (\Omega)}\,,\\
\| F \|_{ L^1(\Omega_{ \frac{|x|}{2}})} 
& \leq 
C \| F \|_{L^\infty_{2+\gamma} (\Omega)}  \log |x| \,.
\end{align*}
Here $C$ is independent of $\gamma$ and $F$. Since 
\begin{align*}
 \min \big \{  \frac{1}{|\alpha| |x|^{2-\gamma}}, |x|^\gamma \big \} \log |x|  \leq |\alpha|^{-\frac{\gamma}{2}} \big |\log |\alpha |\big |\,,~~~~~|\alpha|>0\,,
\end{align*}
we have for $\gamma \in [0, 1)$ and $0<|\alpha|<1$, by using \eqref{est.thm.linear.exterior.4}, 
\begin{align}
\begin{split}
\| \mathcal{R}_\Omega [f] \|_{L^{\infty}_{1+\gamma} (B^{{\rm c}}_{4R_{0}})} 
& \le \frac{C}{1-\gamma} \bigg (   |\alpha|^{-\frac{\gamma}{2}} \big |\log |\alpha| \big | \, \| F \|_{L^\infty_{2+\gamma} (\Omega)}   
 + |\alpha|^{-\frac{1+\gamma}{2}}  \| F\|_{L^2(\Omega)}  \\
& \qquad
+ \sup_{|x|\geq 4 R_0} |x|^{\gamma} \, \big | \lim_{\epsilon \rightarrow 0}  \int_{2 |y|\geq |x|} e^{-\epsilon |y|^2} (F_{12}-F_{21} ) \dd y \big | \bigg )  \,. \label{est.thm.linear.exterior.4'} 
\end{split}
\end{align}
Here $C$ is independent of $0<|\alpha|<1$, $\gamma\in [0,1)$, and $F$.
The estimate \eqref{est.thm.linear.exterior.4'} plays a central role to solve the Navier-Stokes equations for small $|\alpha|$ in the next section. We note that $\tilde c_\Omega [F]$ and the last term in the right-hand side of \eqref{est.thm.linear.exterior.4'} do not converge in general when $F\in L^\infty_{2} (\Omega)^{2\times 2}$ . In solving the Navier-Stokes equations, especially for the case $\gamma=0$, it is crucial that we only need the decay of the component $F_{12}-F_{21}$, which always vanishes when $F$ is symmetric.
\end{remark}


\section{Solvability of nonlinear problem}\label{sec.nonlinear}

Based on the linear analysis in the previous sections the following Navier-Stokes equations are studied in this section:
\begin{equation}\tag{NS$_{\alpha}$}\label{NSa}
  \left\{\begin{aligned}
  -\Delta u - \alpha ( x^\bot \cdot \nabla u - u^\bot ) + \nabla p  &\,=\, - u\cdot\nabla u  + f \,, 
  \quad {\rm div}\, u \,=\, 0\,,  ~~~~~~   x \in \Omega\,, \\
 u  & \, =\,  \alpha x^\bot   \,,  ~~~~~~ x\in \partial\Omega\,, \\
 u  & \, \rightarrow \,     0   \,,  ~~~~~~ |x| \rightarrow \infty\,,
\end{aligned}\right.
\end{equation}
Our aim is to prove, under some conditions on $f$, the unique existence of solutions $(u,\nabla p)$ to \eqref{NSa} satisfying the asymptotic behavior 
\begin{equation*}
u (x) \, = \, \beta V(x) + o(|x|^{-1})
\qquad {\rm as} ~\ |x| \rightarrow \infty
\end{equation*}
for some $\beta \in \R$, where $V$ is a radial circular flow defined by \eqref{def.V} and coincides with $\frac{x^\bot}{4\pi |x|^2}$ for $|x|\gg 1$. As in the previous sections we fix a positive number $R_0\geq 1$ large enough so that $\R^2\setminus \Omega \subset B_{R_0}$, and let $\varphi \in C^\infty_0 (\R^2)$ be a radial cut-off function satisfying 
$\varphi(x) = 1 $ for $|x| \le R_0$, $\varphi (x)=0$ for $|x|\geq 2 R_0$. Set 
\begin{align}
U(x) & \, = \, \varphi (x) x^\bot\,, \label{def.U}
\end{align}
which is a radial circular flow supported in the ball $B_{2R_0}$. We also introduce the function space $X_{\gamma}$, $\gamma \geq 0$, as 
\begin{align}\label{def.X_gamma}
X_{\gamma} \, = \, 
\R  \times \big (\dot{W}^{1,2}_{0,\sigma} (\Omega) \cap L^\infty_{1+\gamma} (\Omega)^2 \big )\,,
\end{align}
which is the Banach space under the norm for $(\beta, w) \in X_\gamma$:
\begin{align}
\| (\beta, w) \|_{X_\gamma} \, = \, |\beta| + \| \nabla w\|_{L^2(\Omega)} + \| w \|_{L^\infty_{1+\gamma}(\Omega)}\,.
\end{align} 
We sketch the proof that $X_\gamma$ is complete. It suffices to show the completeness of the space $\dot{W}^{1,2}_{0,\sigma} (\Omega) \cap L^\infty_{1+\gamma} (\Omega)^2$. Suppose that $\{w^{(n)}\} \subset \dot{W}^{1,2}_{0,\sigma} (\Omega) \cap L^\infty_{1+\gamma} (\Omega)^2$ is a Cauchy sequence. Then there exist $u \in \dot{W}^{1,2}_{0,\sigma} (\Omega)^2$ and $v \in L^\infty_{1+\gamma} (\Omega)^2$ such that $\|\nabla(w^{(n)} - u)\|_{L^2(\Omega)} \rightarrow 0$ and $\|w^{(n)} - v\|_{L^\infty_{1+\gamma}(\Omega)} \rightarrow 0$ as $n \rightarrow \infty$. What we need to show is $u=v$. To show this, set $f=u-v$.
Note that the fact $u, w^{(n)} \in \dot{W}_0^{1,2}(\Omega)^2$ implies $u=w^{(n)}=0$ on $\partial\Omega$.
Then, for any $\phi \in W^{1,2}(\Omega)$ such that ${\rm supp}\, \phi$ is compact, the integration by parts yields for $j,k=1,2$,
\begin{equation*}
\begin{split}
\int_{\Omega}  f_j  \partial_k \phi  \dd x  & \, = \, \int_\Omega (u_j - v_j ) \partial_k \phi \dd x  \\
& \, = \, - \int_\Omega \phi \partial_k u_j  \dd x - \int_\Omega v_j \partial_k \phi \dd x \\ 
& \,=\, - \lim_{n \rightarrow \infty} \big ( \int_{\Omega}  \phi \partial_k w_j^{(n)}  \dd x + \int_{\Omega} w_j^{(n)} \partial_k  \phi \dd x \big ) \,=\, 0\,.
\end{split}
\end{equation*}
Since we may take an arbitrary $\phi \in C_0^\infty (\Omega)$ we first conclude from the above computation that $f_j$ is a constant in $\Omega$, denoted by $c_j$. Next we have for $\varphi\in W^{1,2} (\Omega)^2$ such that ${\rm supp}\, \varphi$ is compact,
\begin{align*}
c_j \int_{\partial\Omega} \varphi \cdot \nu   \dd \sigma_x \, = \, \int_\Omega c_j\,  {\rm div}\,  \varphi \dd x \, = \,  \int_\Omega f_j \, {\rm div}\, \varphi \dd x  \, = \, 0\,,
\end{align*}
where the result of the above computation is used.
This implies $c_j=0$ since we can choose $\varphi$ so that $\int_{\partial\Omega} \varphi \cdot \nu \dd \sigma_x \ne 0$. Thus we obtain $u=v$, and hence, $X_\gamma$ is complete.

Let us recall that for $f\in L^2 (\Omega)^2$ of the form $f={\rm div}\, F=(\sum_{j=1,2}\partial_j F_{1j}, \sum_{j=1,2}\partial_j F_{2j})^\top$ with some $F\in L^2(\Omega)^{2\times 2}$ satisfying $F_{12}-F_{21}\in L^1 (\Omega)$ the coefficients  $\tilde c_\Omega[F]$ and $b_\Omega [f]$ in \eqref{def.c.tildec.exterior} and \eqref{est.thm.linear.exterior.5} are well-defined.
The main results of this section are Theorems \ref{thm.nonlinear}, \ref{thm.nonlinear'} below.
Let us start from the next theorem.
\begin{theorem}\label{thm.nonlinear} Let $\gamma\in [0,1)$.
There exists a positive constant $\epsilon=\epsilon (\Omega,\gamma)$ such that the following statement holds.
Suppose that $f\in L^2 (\Omega)^2$ is of the form $f={\rm div}\, F$ with some $F\in L^\infty_{2+\gamma} (\Omega)^{2\times 2}$, and in addition that $F_{12}-F_{21} \in L^1 (\Omega)$ when $\gamma=0$. If $\alpha\ne 0$ and 
\begin{align}
\begin{split}
|\alpha|^{\frac{1-\gamma}{2}}\, \big |\log |\alpha| \big | & + |\alpha|^{-\frac{\gamma}{2}} \big | \log |\alpha|\big | \, 
\bigg ( |\alpha|^{-\frac12}  \big ( |b_\Omega [f]|  +  \| F \|_{L^2 (\Omega)} + \| f \|_{L^2 (\Omega_{6 R_0})} \big )  \\
& \qquad \qquad 
+ \| F_{12}-F_{21}\|_{L^1 (\Omega)} + \big | \log |\alpha| \big | \,  \| F \|_{L^\infty_{2}(\Omega)}  \bigg )  
< \epsilon\,,
\end{split}
\end{align}
then there exists a unique solution $(u,\nabla p)\in  W^{2,2}_{loc} (\overline{\Omega})^2 \times L^2_{loc} (\overline{\Omega})^2$ to \eqref{NSa} satisfying 
\begin{align}\label{est.thm.nonlinear.1}
\begin{split}
\| \nabla u \|_{L^2 (\Omega)}  
\leq \frac{\| F\|_{L^2 (\Omega)} + C_2 |\alpha |}{\sqrt{1-C_1|\alpha|}} \,, 
\end{split}
\end{align}
and enjoying the expression $ u = \alpha U + \beta V + w$ with $U$ and $V$ defined by \eqref{def.U} and \eqref{def.V}, respectively, and
\begin{align}\label{est.thm.nonlinear.2}
\begin{split}
\beta & \, = \,  \int_{\partial \Omega} y^\bot \cdot \big( T(u,p)\nu \big) \dd \sigma_{y} \,+\, b_\Omega [f]\,,
\end{split}
\end{align}
while
\begin{align}
\begin{split}
\| w \|_{L^\infty_1 (\Omega)} 
& \leq C_3 
\bigg ( |\alpha|^{-\frac{1}{2}} 
\big ( |\alpha | + |b_\Omega[f]| +  \| F\|_{L^2 (\Omega)} + \| f \|_{L^2 (\Omega_{6 R_0})}  \big )  \\
& \qquad\qquad 
+ \big | \log |\alpha|\big |\,    \| F \|_{L^\infty_{2} (\Omega)}  + \| F_{12} - F_{21} \|_{L^1 (\Omega)} \bigg ) \,,\label{est.thm.nonlinear.3}
\end{split}
\end{align}
and if $\gamma \in (0,1)$,
\begin{align}
\begin{split}
\| w \|_{L^\infty_{1+\gamma} (\Omega)} 
& \leq C_3 \bigg ( |\alpha|^{-\frac{1+\gamma}{2}} \big ( |\alpha | |\log |\alpha|  \big | 
+ |b_\Omega[f]| +  \| F\|_{L^2 (\Omega)} + \| f \|_{L^2 (\Omega_{6 R_0})}  \big )  \\
& \quad  \quad\quad\quad  + \big (  |\alpha|^{-\frac{\gamma}{2}} \big | \log |\alpha|\big |\,   + \frac{1}{\gamma} \big ) \| F \|_{L^\infty_{2+\gamma} (\Omega)}  \bigg )\,.\label{est.thm.nonlinear.4}
\end{split}
\end{align}
Here $\epsilon$, $C_1$, $C_2$, and $C_3$ depend only on $\Omega$ and $\gamma$, and are taken uniformly with respect to $\gamma$ in each compact subset of $[0,1)$. 
\end{theorem}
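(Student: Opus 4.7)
The plan is to construct the solution by a Banach fixed--point iteration whose linear step is supplied by Theorem \ref{thm.linear.exterior}. First I would lift the boundary data by writing $u=\alpha U+v$ with $U$ the compactly supported circular flow of \eqref{def.U}; since $\varphi\equiv 1$ near $\partial\Omega$ we have $U|_{\partial\Omega}=x^\bot$, so $v$ vanishes on $\partial\Omega$, and \eqref{NSa} becomes
\[
-\Delta v-\alpha(x^\bot\cdot\nabla v-v^\bot)+\nabla p = -\mathrm{div}(u\otimes u)+f+\mathcal{G}_\alpha,\qquad \mathrm{div}\,v=0,
\]
where $\mathcal{G}_\alpha=\alpha\Delta U+\alpha^2(x^\bot\cdot\nabla U-U^\bot)-\alpha^2\,U\cdot\nabla U$ is compactly supported of size $O(|\alpha|+\alpha^2)$. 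Two structural observations are essential. First, since $\mathrm{div}\,u=0$, the convection is automatically in divergence form $u\cdot\nabla u=\mathrm{div}(u\otimes u)$, so the forcing of the linearized system has precisely the structure required by Theorem \ref{thm.linear.exterior}. Second, and crucially, the tensor $u\otimes u$ is symmetric, so its antisymmetric part $F_{12}-F_{21}$ vanishes identically; this eliminates the potentially singular scale-critical contribution $\|F_{12}-F_{21}\|_{L^1(\Omega)}$ coming from the nonlinearity, which is exactly the cancellation highlighted in the introduction.

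Next, imposing the ansatz $v=\beta V+w$ with $(\beta,w)\in X_\gamma$, I would expand $u\otimes u=(\alpha U+\beta V+w)^{\otimes 2}$. The only summand that fails to lie in $L^\infty_{2+\gamma}(\Omega)$ for $\gamma>0$ is $\beta^{2}\,V\otimes V$, which decays only as $|x|^{-2}$. This difficulty is resolved by the identity $V\cdot\nabla V=\nabla(|V|^2/2)-V\times\mathrm{curl}\,V$, combined with the fact that $\mathrm{curl}\,V=-\nabla\varphi\times V_\infty$ (with $V_\infty=x^\bot/(4\pi|x|^2)$) is supported in $\{R_0\le|x|\le 2R_0\}$. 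Absorbing $\nabla(\beta^2|V|^2/2)$ into a redefined pressure $\tilde p=p+\beta^2|V|^2/2$ therefore leaves only a compactly supported remainder, trivially bounded in $L^{2}\cap L^\infty_{2+\gamma}$ by $C\beta^{2}$. All other quadratic products ($\alpha U\otimes\alpha U$, $\alpha U\otimes v$, $V\otimes w$, $w\otimes w$) are either compactly supported or admissible via the bound $\|FG\|_{L^\infty_{a+b}}\le\|F\|_{L^\infty_a}\|G\|_{L^\infty_b}$.

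Define $\Phi:X_\gamma\to X_\gamma$ as follows. Given $(\beta_1,w_1)$, form $u_1=\alpha U+\beta_1 V+w_1$, assemble the total forcing tensor $F_{\mathrm{tot}}$ (combining $F$, the compactly supported lift and $V\otimes V$ correction terms, and the symmetric nonlinear tensor $-u_1\otimes u_1+\tfrac{1}{2}|u_1|^2\mathbb{I}$), and apply Theorem \ref{thm.linear.exterior} to obtain a unique Stokes solution $v_2=\beta_2 V+w_2$ with $\beta_2$ given explicitly by \eqref{est.thm.linear.exterior.5}. Set $\Phi(\beta_1,w_1):=(\beta_2,w_2)$. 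The bounds \eqref{est.thm.linear.exterior.0} and \eqref{est.thm.linear.exterior.4'} then furnish estimates on $\|\nabla w_2\|_{L^2}$ and $\|w_2\|_{L^\infty_{1+\gamma}}$ in terms of $F_{\mathrm{tot}}$; to control $\|u_1\otimes u_1\|_{L^2}=\|u_1\|_{L^4}^2$ --- noting that $v_1$ need not lie in $L^2(\Omega)$ --- I would split over $\Omega_{2R_0}$ and its complement, invoking the Sobolev embedding plus the Poincar\'e inequality on the bounded part (controlled by $\|\nabla v_1\|_{L^2}$) and the pointwise weight bound on the unbounded part (controlled by $\|v_1\|_{L^\infty_1}$). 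This yields a recursive estimate of the schematic form
\[
\|(\beta_2,w_2)\|_{X_\gamma}\le C\,\mathcal{D}+C|\alpha|^{-\frac{1+\gamma}{2}}|\log|\alpha||\,\|(\beta_1,w_1)\|_{X_\gamma}^{\,2}+C|\alpha|^{1/2}|\log|\alpha||\,\|(\beta_1,w_1)\|_{X_\gamma},
\]
where $\mathcal{D}$ abbreviates the data quantities that appear in the smallness hypothesis of the theorem.

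The chief technical obstacle is tracking the several distinct negative powers of $|\alpha|$ that appear in \eqref{est.thm.linear.exterior.4'} --- namely $|\alpha|^{-(1+\gamma)/2}$ multiplying the $L^2$ norm of the forcing and $|\alpha|^{-\gamma/2}|\log|\alpha||$ multiplying its $L^\infty_{2+\gamma}$ norm --- and pairing each with the correct source of quadratic smallness from the nonlinearity, so that both \eqref{est.thm.nonlinear.3} at $\gamma=0$ and \eqref{est.thm.nonlinear.4} at $\gamma>0$ can be closed simultaneously under the single smallness hypothesis. Once this bookkeeping is in place, $\Phi$ is a contraction on a sufficiently small closed ball of $X_\gamma$, and the Banach fixed-point theorem delivers the unique fixed point $(\beta,w)$, hence the solution $u=\alpha U+\beta V+w$; the decay bounds \eqref{est.thm.nonlinear.3}, \eqref{est.thm.nonlinear.4} are read off from its $X_\gamma$-norm, and the formula \eqref{est.thm.nonlinear.2} for $\beta$ is inherited from \eqref{est.thm.linear.exterior.5} via Lemma \ref{lem.coefficient}.
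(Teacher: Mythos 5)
Your high-level plan is correct and matches the paper's strategy: lift the boundary data by $\alpha U$, impose the ansatz $u=\alpha U+\beta V+w$, absorb the radial-circular-flow self-interaction into a redefined pressure (the paper does this directly by choosing $P$ with $\nabla P=\mathrm{div}\,[(\alpha U+\beta V)\otimes(\alpha U+\beta V)]$, using that a radial circular flow $g(r)x^\bot$ transports itself along a gradient; your vector-identity route for $V\otimes V$ achieves the same thing), and exploit the symmetry of the resulting forcing tensor so that the potentially critical $F_{12}-F_{21}$ contribution from the nonlinearity vanishes identically. The formula \eqref{est.thm.nonlinear.2} for $\beta$ does indeed come from \eqref{est.thm.linear.exterior.5} together with $b_\Omega[\mathrm{div}\,G_\alpha]=0$. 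However, the fixed-point iteration as you have set it up has two gaps that would prevent it from closing.

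First, a single-norm recursion in $X_\gamma$ does not close when $\gamma>0$. The paper's fixed-point set $\mathcal{B}_{\vec\delta,\gamma}$ tracks three quantities with sharply different $\alpha$-scalings: $\delta_1\sim|\alpha|+\mathcal{D}$ controls $|\beta|$, $\|\nabla w\|_{L^2}$, and the local $L^\infty$ norm; $\delta_2\sim|\alpha|^{-1/2}\delta_1/|\log\delta_1|$ controls $\|w\|_{L^\infty_1}$; and $\delta_3\sim|\alpha|^{-(1+\gamma)/2}\delta_1$ controls $\|w\|_{L^\infty_{1+\gamma}}$ (note $\delta_3$ need not be small). In each bilinear estimate feeding Theorem \ref{thm.linear.exterior}, a \emph{small} factor (of order $\delta_1$ or $\delta_2$) is paired with a possibly \emph{large} factor (of order $\delta_3$), as in $\|G_\alpha\|_{L^\infty_{2+\gamma}}\lesssim(|\alpha|+|\beta|+\|w\|_{L^\infty_1})\|w\|_{L^\infty_{1+\gamma}}$. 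Your schematic recursion replaces this by $M_1^2$ with $M_1=\|(\beta_1,w_1)\|_{X_\gamma}\gtrsim\delta_3$, and after multiplying by the weight $|\alpha|^{-(1+\gamma)/2}|\log|\alpha||$ the resulting quantity fails to be small in general (e.g.\ $|\alpha|^{-(1+\gamma)/2}|\log|\alpha||\cdot\delta_3^2\sim|\alpha|^{(1-3\gamma)/2}|\log|\alpha||$, which is not $o(1)$ once $\gamma\geq\frac13$). This loss of precision is exactly what the three-parameter bookkeeping, and the fact that $\Phi$ is a contraction only in the $X_0$-topology while mapping into $\mathcal{B}_{\vec\delta,\gamma}$, are designed to avoid.

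Second, your $L^4$-based treatment of the quadratic term is too weak for the contraction step. Bounding $\|w\otimes w\|_{L^2}$ by $\|w\|_{L^\infty_1}^2\sim\delta_2^2$ and its first difference by $\delta_2\|w_1-w_2\|_{L^\infty_1}$, then multiplying by the $|\alpha|^{-1/2}$ from \eqref{est.thm.linear.exterior.4'}, gives only $|\alpha|^{-1/2}\delta_2\sim 1$, so no contraction. The paper instead invokes the logarithmic Hardy inequality (Lemma A.1), giving $\|w\otimes w\|_{L^2}\lesssim\|w\|_{L^\infty_1}\,\|\nabla w\|_{L^2}\,|\log\|\nabla w\|_{L^2}|$, thereby pairing one factor with the much smaller $\delta_1|\log\delta_1|$ and regaining the needed power of $|\alpha|$. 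Finally, your sketch does not address the energy bound \eqref{est.thm.nonlinear.1}; the paper derives it by an integration by parts against a Bogovskii-corrected cut-off of $w$, and, in the endpoint case $\gamma=0$, by first approximating $F$ by $F_n=e^{-|x|^2/n}F$ (which belongs to $L^\infty_{2+\gamma}$ for $\gamma>0$) and passing to the limit.
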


\begin{remark} (i) A careful analysis implies that  $\beta$ in Theorem \ref{thm.nonlinear} is estimated as 
\begin{align}
|\beta| \leq C_4 \big ( |\alpha| + |b_\Omega[f]| + \| F \|_{L^2 (\Omega)} + \| f\|_{L^2 (\Omega_{6 R_0})} \big )\,,
\end{align}
where $C_4$ depends only on $\Omega$. But we do not go into details in this paper.

\noindent (ii) In Theorem \ref{thm.nonlinear} when $\gamma=0$ the term $w$ decays with the order $O(|x|^{-1})$ and there is no reason why $\beta V$ provides a leading term of the asymptotic behavior of $u$ at $|x|\rightarrow \infty$. 
To achieve this asymptotics  we need the additional decay of $F$ such as $F\in L^\infty_{2,0} (\Omega)^{2\times 2}$; see Theorem \ref{thm.nonlinear'} below.

\end{remark}

\begin{proofx}{Theorem \ref{thm.nonlinear}}
\noindent
In the following argument we will freely use the condition $0<|\alpha| <  e^{-1} $. We look for the solution to \eqref{NSa} of the form
\begin{equation}
u \, = \, \alpha U + v\,,~~~~~~~~~ v\, =\, \beta V + w\,, \qquad (\beta, w)  \in X_\gamma\,. \label{proof.thm.nonlinear.0}
\end{equation}
We need to determine $\beta$ and $w$. Inserting \eqref{proof.thm.nonlinear.0} into \eqref{NSa}, we see that $v$ is the solution to the system
\begin{equation}\tag{NS$_{\alpha}'$}\label{NSad}
 \left\{ \begin{aligned}
  -\Delta v - \alpha ( x^\bot \cdot \nabla v - v^\bot ) + \nabla q  
& \,=\, {\rm div}\, G_{\alpha}(\beta, w) + {\rm div}\, H_{\alpha} (F)\,, ~~~~~~ x\in \Omega\,,\\
 {\rm div}\, v & \,=\, 0\,, ~~~~~~ x \in \Omega\,, \\
  v  & \,=\, 0 \,,  ~~~~~~ x \in \partial \Omega\,. \\
  v  & \, \rightarrow \,     0   \,,  ~~~~~~ |x| \rightarrow \infty\,.
\end{aligned}\right.
\end{equation}
Here 
\begin{align*}
q & \,=\, p + P\,, \\
G_\alpha (\beta,w) 
& \,=\,  -\alpha ( U \otimes w + w \otimes U )  - \beta ( V \otimes w + w \otimes V ) - w \otimes w\,, \\
H_\alpha (F) & \,=\, \alpha \nabla U + F\,,
\end{align*}
and we may assume that $\int_{\Omega_{6 R_0}} q \dd x=0$. Note that we have used the relations 
$x^\bot \cdot \nabla U - U^\bot =0$, and the radial scalar function $P=P(|x|)$ is taken so that 
$ \nabla P = {\rm div}\, [ (\alpha U + \beta V) \otimes (\alpha U + \beta V)]$.
Both of these follow from the direct calculation. The proof of the unique existence below relies on the standard Banach fixed point argument in a suitable class of functions. To this end we introduce the closed convex set $\mathcal{B}_{\vec{\delta},\gamma}$ in $X_0$:
\begin{align}\label{proof.thm.nonlinear.1}
\begin{split}
\mathcal{B}_{\vec{\delta}, \gamma}   
\, = \, \mathcal{B}_{(\delta_1,\delta_2,\delta_3),\gamma}  
&\, = \, 
\big \{ (\beta, w) \in X_0~|~ 
\quad |\beta | + \| \nabla w\|_{L^2 (\Omega)} + \| w\|_{L^\infty (\Omega_{5 R_0})}  \leq \delta_1\,, \\
&  ~~\qquad\qquad\qquad\qquad~~
\| w\|_{L^\infty_1 (\Omega)} \leq \delta_2\,,~~~\quad \| w\|_{L^\infty_{1+\gamma} (\Omega)}\leq \delta_3 \big \}\,.
\end{split}
\end{align}
Here we have set $\vec{\delta}=(\delta_1,\delta_2,\delta_3)$, and the positive numbers $\delta_1, \delta_2,\delta_3$ with $\delta_2 \leq \delta_3$ will be suitably determined later. We note that the following inclusion always holds for $\delta_2\leq \delta_3$.
\begin{align}
\mathcal{B}_{(\delta_1,\delta_2,\delta_3), \gamma} \subset \mathcal{B}_{(\delta_1,\delta_2,\delta_2),0}\,.\label{inclusion}
\end{align}
For any $\omega = (\beta, w) \in \mathcal{B}_{\vec{\delta}, \gamma}$,
let $(u_{\omega},\nabla q_\omega)$ be the unique solution in Theorem \ref{thm.linear.exterior} to the linear system
\begin{equation*}
  \left\{
\begin{aligned}
  -\Delta u_{\omega} - \alpha ( x^\bot \cdot \nabla u_{\omega} - u_{\omega}^\bot ) + \nabla q_{\omega} 
 &  \,=\, {\rm div}\, G_{\alpha}(\beta, w) + {\rm div}\, H_{\alpha} (F)\,, ~\quad ~ x\in \Omega\,,\\
 {\rm div}\, u_{\omega} & \,=\, 0\,, ~~~~~~  x \in \Omega, \\
 u_{\omega} & \,=\, 0 \,,  ~~~~~~ x \in \partial \Omega\,, \\
 u_{\omega} & \, \rightarrow \,     0   \,,  ~~~~~~ |x| \rightarrow \infty\,,
\end{aligned}\right.
\end{equation*}
Our aim is to show the unique existence of $(\beta, w)\in \mathcal{B}_{\vec{\delta},\gamma}$ such that $u_{\omega} = u_{(\beta,w)} = \beta V + w$ for suitably chosen and sufficiently small $0<\delta_1  \leq \delta_2< e^{-2}$ and $\delta_2 \leq \delta_3$. We remark that the value $\delta_3$ need not to be small when $\gamma$ is positive. Let us start from the estimates for  $G_{\alpha}(\beta, w)$. Firstly we estimate its $L^2$ norm as 
\begin{align}
\begin{split}
&~~~ \| G_{\alpha}(\beta, w)\|_{L^2 (\Omega)}  \\
& \leq 
C\bigg ( |\alpha| \, \| \nabla w\|_{L^2 (\Omega)} + |\beta|  \, \|w\|_{L^\infty_{1} (\Omega)}  + \| w\|_{L^\infty_{1}(\Omega)} \| \nabla w\|_{L^2 (\Omega)} | \log \| \nabla w \|_{L^2 (\Omega)} | \bigg ) \,. \label{proof.thm.nonlinear.2}
\end{split}
\end{align}
Here, for the nonlinear term, we have used \eqref{est.lem.A.1.2} and the smallness of $\delta_1$ and $\delta_2$ to obtain
\begin{align*}
\| w\otimes w\|_{L^2(\Omega)} & \leq C \| w\|_{L^\infty_1 (\Omega)} \|(1+|x|)^{-1} w\|_{L^2(\Omega)} \\
&\leq C \| w\|_{L^\infty_1 (\Omega)} \|\nabla w\|_{L^2 (\Omega)} |\log \| \nabla w \|_{L^2 (\Omega)} |\,.
\end{align*}
On the other hand, it is not difficult to see that
\begin{align}
\| G_{\alpha}(\beta, w)\|_{L^\infty_{2+\gamma'} (\Omega)} 
& \leq C\big ( |\alpha| + |\beta| + \|w\|_{L^\infty_{1}(\Omega)} \big ) \| w\|_{L^\infty_{1+\gamma'}(\Omega)}\,, \quad 0\leq \gamma'\leq \gamma\,, \label{proof.thm.nonlinear.3}  \\
\|  {\rm div}\, G_{\alpha}(\beta, w) \|_{L^2 (\Omega_{6R_0})} 
& \leq C \big ( |\alpha| + |\beta| + \|w\|_{L^\infty(\Omega_{6R_0})} \big ) \| \nabla w\|_{L^2 (\Omega)}\,, \label{proof.thm.nonlinear.5}
\end{align}
and 
\begin{align}
\| H_\alpha (F)\|_{L^2 (\Omega)} & \leq C \big ( |\alpha| + \| F \|_{L^2 (\Omega)} \big )\,,\label{proof.thm.nonlinear.6}  \\
\| H_\alpha (F)\|_{L^\infty_{2+\gamma'} (\Omega)} & \leq C \big ( |\alpha| + \| F \|_{L^\infty_{2+\gamma'} (\Omega)} \big )\,, \quad \quad 0\leq \gamma'\leq \gamma\,, \label{proof.thm.nonlinear.7}\\
\| {\rm div}\, H_\alpha (F)\|_{L^2 (\Omega_{6 R_0})} & \leq C \big ( |\alpha| + \| f \|_{L^2 (\Omega_{6 R_0})} \big )\,.\label{proof.thm.nonlinear.8}
\end{align}
Then we can apply the result of Theorem \ref{thm.linear.exterior}. To simplify the notation we set 
\begin{align}\label{proof.thm.nonlinear.9}
\begin{split}
M (\alpha,\beta,F, w) & \, = \,  ( |\alpha| +|\beta| ) \, \| \nabla w\|_{L^2 (\Omega)} +  |\beta |\,  \| w\|_{L^\infty_{1} (\Omega)}  \\
& ~~~ + \| w\|_{L^\infty_{1}(\Omega)}  \| \nabla w\|_{L^2 (\Omega)} |\log \| \nabla w \|_{L^2 (\Omega)}|  \, + |\alpha| + \| F \|_{L^2 (\Omega)} \,.
\end{split}
\end{align}
From \eqref{est.thm.linear.exterior.0}, \eqref{proof.thm.nonlinear.2}, and  \eqref{proof.thm.nonlinear.6}, we have 
\begin{align}
\| \nabla u_{(\beta,w)} \|_{L^2 (\Omega)}
& \le CM (\alpha,\beta,F, w)\,.\label{proof.thm.nonlinear.10}
\end{align}
Moreover, by the Sobolev embedding $W^{2,2}(\Omega_{5 R_0})\hookrightarrow L^\infty (\Omega_{5 R_0})$ 
and \eqref{est.thm.linear.exterior.0} - \eqref{est.thm.linear.exterior.2} 
combined with \eqref{proof.thm.nonlinear.2},  \eqref{proof.thm.nonlinear.5},  \eqref{proof.thm.nonlinear.6}, 
\eqref{proof.thm.nonlinear.8}, and $\| w\|_{L^\infty (\Omega_{6 R_0})} \leq \| w\|_{L^\infty_{1} (\Omega)}$, we have 
\begin{align}\label{proof.thm.nonlinear.11}
\begin{split}
& ~~\| u_{(\beta,w)} \|_{L^\infty(\Omega_{5 R_0})} + \| u_{(\beta,w)} \|_{W^{2,2}(\Omega_{5 R_0})} + \| q_{(\beta,w)} \|_{W^{1,2}(\Omega_{5 R_0})}  \\
& \leq C \big ( M (\alpha,\beta,F, w) +  \| f \|_{L^2 (\Omega_{6 R_0})} \big )\,.
\end{split}
\end{align}
Set  $\tilde F =G_\alpha (\beta, w) + H_\alpha (F)$ and $\tilde f = {\rm div}\, \tilde F$. 
By Theorem \ref{thm.linear.exterior}, the velocity $u_\omega = u_{(\beta,w)}$ is written as
\begin{equation*}
u_{\omega} \,=\, \psi [\omega] V + R[\omega]\,,
\end{equation*}
where $R[\omega]$ belongs to $L^\infty_{1+\gamma} (\Omega)^2$ and $\psi [\omega]$ is given by
\begin{align}
\begin{split}
\psi [\omega] 
& \, = \, 
\int_{\partial \Omega} y^\bot \cdot T(u_{\omega}, q_{\omega}) \nu \, \dd \sigma_{y} + b_\Omega [\tilde f]\,,\\
b_\Omega [\tilde f]  
& \, = \, 
\tilde c_\Omega [ \tilde F]  
+ \int_\Omega \
\big\{ \big ( y^\bot \cdot \tilde f  - \tilde F_{12} + \tilde F_{21} \big ) \varphi  
+ y^\bot \cdot \tilde F \nabla \varphi  \big\} \dd y\,. 
\label{proof.thm.nonlinear.12}
\end{split}
\end{align}
We observe that $\tilde c_\Omega [G_\alpha (\beta,w)]=0$ and 
\begin{align*}
\int_\Omega 
\big\{
\big ( y^\bot \cdot {\rm div}\, G_\alpha (\beta,w)  - G_\alpha (\beta,w)_{12} + G_\alpha (\beta,w)_{21} \big ) \varphi  + y^\bot \cdot (G_\alpha (\beta,w) \nabla \varphi ) 
\big\} \dd y  \, = \, 0\,.
\end{align*}
Here we have used the facts that $G_{\alpha}(\beta, w)$ is symmetric and its trace on the boundary is zero.
This implies $b_\Omega [{\rm div}\, G_\alpha (\beta, w)]=0$. Moreover, we have 
\begin{align*}
b_\Omega [ \Delta U]  \, = \, c_\Omega [\Delta U] & \, =\, 0
\end{align*}
in virtue of the computation
\begin{align*}
\int_{\Omega} y^\bot \cdot \Delta U \dd y 
\, = \, \int_{\Omega} y\cdot \nabla {\rm rot}\, U \dd y & 
\, = \, \int_{\partial\Omega} y\cdot \nu  \, ({\rm rot}\, U ) \dd \sigma_y
- 2\int_\Omega {\rm rot}\, U \dd y \\
& \, = \,  \int_{\partial\Omega} y\cdot \nu  \, ({\rm rot}\, U ) \dd \sigma_y 
- 2 \int_{\partial\Omega} \nu^\bot \cdot U \dd \sigma_y \\
& \, = \, 2 \int_{\partial\Omega} y\cdot \nu  \, \dd \sigma_y 
- 2 \int_{\partial\Omega} \nu^\bot \cdot y^\bot \dd \sigma_y  
\, =\, 0\,.
\end{align*}
Here ${\rm rot}\, U = \partial_1 U_2- \partial_2 U_1$ and we have used the identity $U(x)=x^\bot$ near $\partial\Omega$. 
Hence, \eqref{proof.thm.nonlinear.12} is in fact written as
\begin{align}
\psi [\omega] &\, = \, \int_{\partial \Omega} y^\bot \cdot T(u_{\omega}, q_{\omega}) \nu \, \dd \sigma_{y}  + b_\Omega [f]\,. \label{proof.thm.nonlinear.12'}
\end{align}

Now let us define the mapping $\Phi : \mathcal{B}_{\vec{\delta},\gamma} \rightarrow X_0$ as 
\begin{align}
\Phi [\omega] \, = \, ( \psi [\omega], R[\omega] )\,,~~~~~~
\psi [\omega] {\rm ~is~given~by}~\eqref{proof.thm.nonlinear.12'}\,,
~~~R[\omega] = u_{\omega} - \psi [\omega] V\,.\label{def.Phi}
\end{align} 
Recalling the inclusion \eqref{inclusion}, our aim is to show

\noindent 
(i) $\Phi$ is a mapping from $\mathcal{B}_{\vec{\delta},\gamma}$ into $\mathcal{B}_{\vec{\delta},\gamma}$, and

\noindent 
(ii) $\Phi$ is a contraction on $\mathcal{B}_{\vec{\delta},0}$ in the topology of $X_0$. i.e., there is $\tau\in (0,1)$ such that $\| \Phi (\omega_1) - \Phi (\omega_2) \|_{X_0} \leq \tau \| \omega_1 - \omega_2 \|_{X_0}$ for any $\omega_1, \omega_2 \in \mathcal{B}_{\vec{\delta},0}$. 

\noindent 
The properties (i) and (ii) imply the existence of the fixed point of $\Phi$ in $\mathcal{B}_{\vec{\delta},\gamma}$ even for the case $\gamma >0$. Indeed, note that the sequence $\{\omega^{(n)}\}_{n=0}^\infty = \{(\beta^{(n)}, w^{(n)}\}_{n=0}^\infty$ defined by $\omega^{(0)}= \Phi (0)$ and $\omega^{(n)}=\Phi (\omega^{(n-1)})$ for $n=1,\ldots$ is a Cauchy sequence in $X_0$ and each $\omega^{(n)}$ belongs to $\mathcal{B}_{\vec{\delta},\gamma}$, which is not difficult to see from (i) and (ii). 
Then the limit $\omega =(\beta, w)$ of $\{\omega^{(n)}\}_{n=0}^\infty$ in $X_0$ also belongs to $\mathcal{B}_{\vec{\delta},\gamma}$ since $\mathcal{B}_{\vec{\delta},\gamma}$ is a closed subset in $X_0$ by the definition. 

To prove (i) let us estimate $\psi [\omega]$ based on the representation \eqref{proof.thm.nonlinear.12'}. By the trace theorem  we have 
\begin{align*}
|  \int_{\partial \Omega} y^\bot \cdot T(u_{\omega}, q_{\omega}) \nu \, \dd \sigma_{y} | 
\leq C \big (\| \nabla u_\omega \|_{W^{1,2}(\Omega_{5 R_0})} + \| q_\omega \|_{W^{1,2}(\Omega_{5 R_0})} \big )\,,
\end{align*}
Hence we have from \eqref{proof.thm.nonlinear.11},
\begin{align}
| \psi [\omega] | &\leq  C \big ( M (\alpha,\beta,F, w) +  | b_\Omega [f] | + \| f \|_{L^2 (\Omega_{6 R_0})} \big )\,.\label{proof.thm.nonlinear.13}
\end{align}
Next let us estimate $R[\omega]$. Firstly we observe from \eqref{proof.thm.nonlinear.11}, \eqref{proof.thm.nonlinear.10}, 
and \eqref{proof.thm.nonlinear.13} that 
\begin{align}
\| R[\omega]\|_{L^\infty (\Omega_{5 R_0})} + \| \nabla R[\omega] \|_{L^2 (\Omega)} 
& = 
\| u_\omega - \psi [\omega] V \|_{L^\infty (\Omega_{5 R_0})} + \| \nabla (u_\omega - \psi [\omega]V )  \|_{L^2 (\Omega)} \nonumber \\
& \leq  C \big ( \| u_\omega \|_{L^\infty (\Omega_{5 R_0})} + \| \nabla u_\omega  \|_{L^2 (\Omega)}  + | \psi [\omega]| \big )\nonumber \\
& \leq C \big ( M (\alpha,\beta,F, w) +  |b_\Omega [f]| + \| f \|_{L^2 (\Omega_{6 R_0})} \big )\,.\label{proof.thm.nonlinear.14}
\end{align}
On the other hand, we have from \eqref{est.thm.linear.exterior.4'} and by the condition $F_{12}-F_{21}\in L^1 (\Omega)$, for any $\gamma'\in [0,\gamma]$,
\begin{equation}\label{proof.thm.nonlinear.15}
\begin{split}
\| R[\omega] \|_{L^{\infty}_{1+\gamma'} (B^{{\rm c}}_{4R_{0}})} 
&\leq 
\frac{C}{1-\gamma'}  
\bigg ( 
|\alpha|^{-\frac{\gamma'}{2}} \big | \log |\alpha| \big | \, \| G_{\alpha}(\beta, w) + H_{\alpha} (F) \|_{L^\infty_{2+\gamma'}(\Omega)} \\
& \qquad
+ |\alpha|^{-\frac{1+\gamma'}{2}} \| G_{\alpha}(\beta, w) + H_{\alpha} (F) \|_{L^2 (\Omega)}
+ d_{\gamma'} [F] \bigg ) \,, \\
d_{\gamma'} [F] & \, = \,  \sup_{|x|\geq 4 R_0}  |x|^{\gamma'} \big | \int_{2|y|\geq |x|}  (F_{12} -F_{21} ) \dd y \big |\,,
\end{split}
\end{equation}
where  $C$ is independent of  $\gamma'$, $\gamma$, and $\alpha$. Here we have used that $G_\alpha (\beta, w)$ is symmetric and that $U=0$ for $|x|\geq 2 R_0$ by its definition. Note that $d_0[F]\leq \| F_{12}-F_{21} \|_{L^1 (\Omega)}$ holds, which will be used later. 
Combining \eqref{proof.thm.nonlinear.14} with \eqref{proof.thm.nonlinear.15}, 
\eqref{proof.thm.nonlinear.2}, \eqref{proof.thm.nonlinear.3}, 
\eqref{proof.thm.nonlinear.6}, and \eqref{proof.thm.nonlinear.7},
we obtain for $\gamma'\in [0,\gamma]$,
\begin{align}\label{proof.thm.nonlinear.16}
\begin{split}
\| R[\omega] \|_{L^\infty_{1+\gamma'} (\Omega)} 
&\leq 
\frac{C}{1-\gamma'}
\bigg \{ 
| b_\Omega [f] | + \| f \|_{L^2 (\Omega_{6 R_0})} +  |\alpha|^{-\frac{1+\gamma'}{2}}  M (\alpha,\beta, F, w)
+ d_{\gamma'} [F] \\
& ~~~~~ \qquad  +  |\alpha|^{-\frac{\gamma'}{2}} \big | \log |\alpha | \big | \,  \big ( |\alpha| + |\beta| + \| w\|_{L^\infty_{1}(\Omega)} \big ) \| w\|_{L^\infty_{1+\gamma'}(\Omega)} \\
& ~~~~~ \qquad\quad  + |\alpha|^{-\frac{\gamma'}{2}} \big | \log |\alpha| \big | \, \big (  |\alpha| +  \|F\|_{L^\infty_{2+\gamma'}(\Omega)} \big )  \bigg \}\,.
\end{split}
\end{align}
Now we observe that for sufficiently small $\delta_1$ and $\delta_2$ (depending only on $\Omega$ so far) 
the function $M(\alpha,\beta,F,w)$ is bounded from above as
\begin{align}
M(\alpha,\beta,F,w) \leq \big ( |\alpha| + \delta_1 + \delta_2 |\log \delta_1 | \big ) \delta_1  + |\alpha| + \| F\|_{L^2 (\Omega)}\,.\label{proof.thm.nonlinear.16'}
\end{align}
Here we have used the fact that $\rho(r) = r |\log r|$ is monotone increasing on $(0, e^{-1} ]$, which implies $ \| \nabla w\|_{L^2 (\Omega)} |\log \| \nabla w \|_{L^2 (\Omega)}| \leq \delta_1 |\log \delta_1 | $.
By taking \eqref{proof.thm.nonlinear.13}, \eqref{proof.thm.nonlinear.14}, and \eqref{proof.thm.nonlinear.16'} into account, we  assume that  $|\alpha|$, $\|F\|_{L^2(\Omega)}$, $|b_\Omega [f]|$, and $\| f\|_{L^2(\Omega_{6 R_0})}$ are small enough so  that
\begin{align}
\delta_1 = 16 (C_0 + 1) 
\big ( |\alpha| + \|F\|_{L^2(\Omega)} + |b_\Omega [f]| + \| f\|_{L^2(\Omega_{6 R_0})}\big ) 
< \frac{1}{16 (C_0 + 1)}\,.\label{def.delta_1}
\end{align}
Here $C_0$ is the largest constant of $C$ appearing in 
\eqref{proof.thm.nonlinear.13}, \eqref{proof.thm.nonlinear.14}, 
and \eqref{proof.thm.nonlinear.16} (larger than $1$ without loss of generality),
and then,  $C_0$ is independent of $\gamma$ and $\alpha$.
Then for $\delta_2\in (0,\frac{1}{16 (C_0 + 1) |\log \delta_1|}]$ we see from \eqref{proof.thm.nonlinear.16'},
\begin{align}
M(\alpha,\beta,F,w) \leq \frac{1}{4  (C_0 + 1) } \delta_1\,.\label{proof.thm.nonlinear.16''}
\end{align} 
Thus, \eqref{proof.thm.nonlinear.13} and \eqref{proof.thm.nonlinear.14} imply that for $\delta_2\in (0,\frac{1}{16  (C_0 + 1) |\log\delta_1|}]$,
\begin{align*}
|\psi [\omega]| + \| \nabla R[\omega] \|_{L^2 (\Omega)} + \| R[\omega] \|_{L^\infty (\Omega_{5 R_0})}  \leq \frac{\delta_1}{2}~~~~~~{\rm for~all}~~\omega\in \mathcal{B}_{\vec{\delta},\gamma}\,.
\end{align*}
\noindent
Next we focus on $\| R[\omega] \|_{L^\infty_{1}(\Omega)}$. Taking \eqref{proof.thm.nonlinear.16} with $\gamma'=0$  and \eqref{def.delta_1} (with $|\alpha|<e^{-1}$) into account, we set $\delta_2$ as
\begin{align}
\delta_2 = \frac{ 16  (C_0+1) }{|\log \delta_1|} 
\bigg ( |\alpha|^{-\frac{1}{2}}  \delta_1 + \big |\log |\alpha|  \big | \, \big ( |\alpha|  +  \| F \|_{L^\infty_{2} (\Omega)}  \big ) \, + \| F_{12} - F_{21} \|_{L^1 (\Omega)} \bigg )\,,\label{def.delta_2}
\end{align}
which is smaller than $\frac{1}{16 (C_0 + 1) |\log \delta_1|}$ if $|\alpha|$ and the data related to $F$ appearing in \eqref{def.delta_1} and \eqref{def.delta_2} are small enough, while $\delta_2$ is larger than $\delta_1$ 
since $ \delta_{1} \geq |\alpha|$ and $ |\alpha|^{\frac{1}{2}} \big |\log |\alpha| \big | \leq 1$ for $|\alpha| <e^{-1}$.
Note that $d_0[F]\leq \| F_{12}-F_{21} \|_{L^1 (\Omega)}$ is also taken into account in the choice of \eqref{def.delta_2}.
The key observation here is that, when $f=F=0$, the numbers $\delta_1$ and $\delta_2$ are of the order $O(|\alpha|)$ and $O(|\alpha|^{\frac{1}{2}})$ for $|\alpha|\ll 1$, respectively. 
Then the term $C\big |\log |\alpha | \big | \big ( |\alpha| + |\beta| + \| w\|_{L^\infty_{1} (\Omega)} \big )$ in the right-hand side of \eqref{proof.thm.nonlinear.16} with $\gamma'=0$ is bounded from above by 
\begin{align}
C_0 \big |\log |\alpha | \big |  \, \big ( |\alpha| + \delta_1  + \delta_2\big )  
\leq \frac{1}{32}
\,, \label{proof.thm.nonlinear.16''}
\end{align}
if $\gamma\in [0,1)$ and if $|\alpha|$ and the data related to $F$ (and $f={\rm div}\, F$) appearing in \eqref{def.delta_1} and \eqref{def.delta_2} are sufficiently small. 
Note that, since $\delta_2$ is at best of the order $O(|\alpha|^\frac12)$,
the condition $\gamma\in [0,1)$ is crucial to ensure \eqref{proof.thm.nonlinear.16''}.
Precisely, we need the smallness such as
\begin{align}
|\alpha|^{\frac{1}{2}}\, \big |\log |\alpha| \big | + \kappa_\alpha (F) < \epsilon (\Omega)\ll 1\,,
\label{proof.thm.nonlinear.17'}
\end{align}
where
\begin{align}\label{proof.thm.nonlinear.17}
\begin{split}
\kappa_\alpha (F) & =|\alpha|^{-\frac{1}{2}} \big |\log |\alpha| \big |\,   \big ( |b_\Omega [f]|  + \| F \|_{L^2 (\Omega)} + \| f \|_{L^2 (\Omega_{6 R_0})} \big )\\
& \qquad  
+  \big |\log |\alpha| \big | \, \| F_{12}-F_{21} \|_{L^1 (\Omega)}  +  (\log |\alpha|)^2 \,  \| F \|_{L^\infty_{2}(\Omega)}\,.
\end{split}
\end{align}
Here the number $\epsilon (\Omega)$ depends only on $\Omega$ and is independent of $\alpha$ and $\gamma$, 
and we also note that $\kappa_\alpha [F]$ does not contain the number $\gamma$ in its definition.
Under the above smallness condition we have from \eqref{proof.thm.nonlinear.16} with $\gamma'=0$ and the choice of $\delta_2$,
\begin{align*}
\| R[\omega] \|_{L^\infty_{1} (\Omega)}\leq \frac{\delta_2}{2}~~~~~~{\rm for~all}~~\omega \in \mathcal{B}_{\vec{\delta},\gamma}\,,
\end{align*}
as desired. In the above argument the number $\delta_3$ can be arbitrary.

Next we estimate the norm $\| R[\omega] \|_{L^\infty_{1+\gamma}(\Omega)}$ (in the case $\gamma$ is positive). To bound the term 
\begin{align*}
\frac{C}{1-\gamma} |\alpha|^{-\frac{\gamma}{2}} \big |\log |\alpha| \big | \big ( |\alpha| + |\beta| + \| w\|_{L^\infty_{1} (\Omega)} \big )
\end{align*}
in the right-hand side of \eqref{proof.thm.nonlinear.16} with $\gamma'=\gamma$, we need the additional smallness for $\delta_1$ and $\delta_2$ depending on $\gamma$:
\begin{align}
\frac{C_0}{1-\gamma} |\alpha|^{-\frac{\gamma}{2}} \big |\log |\alpha | \big |  \, \big ( |\alpha| + \delta_1  + \delta_2\big )  
\leq \frac{1}{32}\,.
\end{align}
Precisely, in the case $\gamma$ is positive, $\delta_1$ and $\delta_2$ are required to have the smallness as
\begin{align}
|\alpha|^{\frac{1-\gamma}{2}}\, \big |\log |\alpha| \big | + |\alpha|^{-\frac{\gamma}{2}} \kappa_\alpha (F) < \epsilon_{\gamma} (\Omega)\ll 1\,,\label{proof.thm.nonlinear.17''}
\end{align}
where the number $\epsilon_{\gamma} (\Omega)$ depends $\Omega$ on $\gamma$, contrary to the case of $\epsilon (\Omega)$ in \eqref{proof.thm.nonlinear.17'}. We note that $\epsilon _0 (\Omega)=\epsilon (\Omega)$ and $\epsilon_\gamma (\Omega)$ is taken so that it is monotone decreasing and continuous on $\gamma\in [0,1)$ in virtue of \eqref{proof.thm.nonlinear.16}. Then we set $\delta_3$ as 
\begin{align}
\delta_3 \, = \, 
2 \bigg ( 
 |\alpha|^{-\frac{1+\gamma}{2}}  \delta_1 + |\alpha|^{-\frac{\gamma}{2}} \big |\log |\alpha|  \big | \, \| F \|_{L^\infty_{2+\gamma} (\Omega)}  \, + d_\gamma [F]  \bigg )\,,\label{def.delta_3}
\end{align}
Then we can conclude from \eqref{proof.thm.nonlinear.16}  with $\gamma'=\gamma$ and \eqref{proof.thm.nonlinear.16''} that 
\begin{align*}
\| R[\omega] \|_{L^\infty_{1+\gamma} (\Omega)} \leq \frac{\delta_3}{2} ~~~~~~ {\rm for~all} ~~\omega \in \mathcal{B}_{\vec{\delta},\gamma}\,.
\end{align*}
It should be emphasized here that the argument works even if $\delta_3$ itself  is large.
We have now shown that $\Phi$ is a mapping from $\mathcal{B}_{\vec{\delta},\gamma}$ into $\mathcal{B}_{\vec{\delta},\gamma}$ with the choice of $\delta_j$ in \eqref{def.delta_1}, \eqref{def.delta_2}, and \eqref{def.delta_3} for $j=1,2,3,$ respectively.

Next let us show that $\Phi$ is a contraction mapping on $\mathcal{B}_{(\delta_1,\delta_2,\delta_2),0}$. For convenience we set $\vec{\beta} = (\beta_1, \beta_2)$, and ${\bf w} = (w_1, w_2)$ for $\omega_j = (\beta_j, w_j) \in \mathcal{B}_{(\delta_1,\delta_2,\delta_2),0}$, $j=1,2$. We also set 
\begin{align}
h \, = \, \big ( \psi [\omega_1] - \psi [\omega_2] \big ) V + R [\omega_1] - R[\omega_2]\,,\label{def.h}
\end{align}
which is equal to $u_{\omega_1}-u_{\omega_2}$, and hence, the velocity $h$ satisfies
\begin{equation*}
  \left\{\begin{aligned}
  -\Delta h - \alpha ( x^\bot \cdot \nabla h - h^\bot ) + \nabla q  & \, = \,  {\rm div}\, G'_{\alpha}(\vec{\beta}, {\bf w})\,,  ~~~  {\rm div}\, h \,=\, 0\,, ~~~~~  x \in \Omega\,, \\
  h  & \,=\, 0 \,,  ~~~~~~ x \in \partial \Omega\,, \\
  h & \, \rightarrow \,     0   \,,  ~~~~~~ |x| \rightarrow \infty\,,
\end{aligned}\right.
\end{equation*}
where $q=q_{\omega_1}-q_{\omega_2}\in W^{1,2}_{loc} (\overline{\Omega})$. Here $G'_{\alpha} (\vec{\beta}, {\bf w} )$ is given by
\begin{align*}
G'_{\alpha} (\vec{\beta}, {\bf w} ) & \, = \,  -\alpha ( U \otimes (w_1 - w_2) + (w_1 - w_2) \otimes U )  - (\beta_1 - \beta_2) ( V \otimes w_1 + w_1 \otimes V ) \\
 & ~~~
- \beta_2 ( V \otimes (w_1 - w_2) + (w_1 - w_2) \otimes V )  - w_1 \otimes (w_1 - w_2) -  (w_1 - w_2) \otimes w_2 \,.
\end{align*}
Below we give the estimates of $G'_\alpha (\vec{\beta}, {\bf w})$, where the estimate for the $L^2$ norm of the term $V\otimes w_1 + w_1 \otimes V$ has to be carefully computed: in principle, we need to estimate it by $\delta_1$ rather than $\delta_2$, for their dependence on $|\alpha|$ is essentially different. Due to the negative power on $|\alpha|$ in the linear estimate \eqref{est.thm.linear.exterior.4'}  this is crucial to show that $\Phi$ is a contraction mapping. Because of this reasoning we apply \eqref{est.lem.A.1.2} in Lemma A.1 by recalling the bound $|V(x)|\leq C (1+|x|)^{-1}$, which yields
\begin{align}
\| V \otimes w_1 + w_1 \otimes V \|_{L^2 (\Omega)} \leq C \| \nabla w_1 \|_{L^2 (\Omega)} | \log \| \nabla w_1 \|_{L^2 (\Omega)} |\,.
\end{align} 
Here we have used the smallness of $\| \nabla w_1 \|_{L^2 (\Omega)} + \| w_1\|_{L^\infty_{1} (\Omega)}$. Similarly, also for the nonlinear term in $G'_\alpha (\vec{\beta},{\bf w})$ we will apply \eqref{est.lem.A.1.2}. Then it follows that
\begin{align}
& ~~~ \| G'_{\alpha} (\vec{\beta}, {\bf w} ) \|_{L^2 (\Omega)} \nonumber \\
& \leq C \big ( |\alpha| \, \|\nabla (w_1-w_2) \|_{L^2 (\Omega)} + |\beta_1-\beta_2| \, \| \nabla w_1\|_{L^2 (\Omega)} |\log \| \nabla w_1 \|_{L^2 (\Omega)} |  \nonumber \\
& ~~~ + |\beta_2| \, \| w_1-w_2 \|_{L^\infty_{1} (\Omega)} +  \| w_1-w_2 \|_{L^\infty_{1} (\Omega)} \| \nabla {\bf w} \|_{L^2 (\Omega)} \big | \log\| \nabla {\bf w} \|_{L^2 (\Omega)} \big \|  \big )  \nonumber\\
& \leq C \big ( |\alpha| \, \|\nabla (w_1-w_2) \|_{L^2 (\Omega)} + \delta_1 |\log \delta_1| \,  |\beta_1-\beta_2|  +  3 \delta_1 |\log \delta_1 | \| w_1-w_2 \|_{L^\infty_{1} (\Omega)} \big ) \nonumber \\
& \leq C (|\alpha| +  \delta_1 |\log \delta_1 | ) \| \omega_1-\omega_2 \|_{X_0}\,,\label{proof.thm.nonlinear.18}
\end{align}
and on the other hand, it is not difficult to see
\begin{align}
 \| G'_{\alpha}(\vec{\beta}, {\bf w}) \|_{L^\infty_{2} (\Omega)} & \le 
C \big ( |\alpha| \, \| w_1-w_2 \|_{L^\infty_{1} (\Omega)}  + | \beta_1 - \beta_2 | \, \| w_1 \|_{L^\infty_{1} (\Omega)} \nonumber \\
&~~~ + | \beta_2 | \, \| w_1-w_2 \|_{L^\infty_{1} (\Omega)}  +  \| {\bf w}\|_{L^\infty_{1} (\Omega)}  \| w_1 - w_2\|_{L^\infty_{1} (\Omega)} \big ) \nonumber \\
& \le
C \big ( \delta_2  | \beta_1 - \beta_2 |    + ( |\alpha | + \delta_1 + 2\delta_2 )  \| w_1-w_2 \|_{L^\infty_{1} (\Omega)} \big ) \nonumber \\
& \leq C (|\alpha| + \delta_1 +  \delta_2  ) \| \omega_1-\omega_2 \|_{X_0}\,.\label{proof.thm.nonlinear.19}
\end{align}
Similarly, we observe that
\begin{align}
& ~~~ \| {\rm div}\,  G'_{\alpha} (\vec{\beta}, {\bf w} ) \|_{L^2 (\Omega_{5 R_0})} \nonumber \\
& \leq C \big ( |\alpha | \| \nabla (w_1-w_2) \|_{L^2 (\Omega)} + |\beta_1-\beta_2 | \| \nabla  w_1 \|_{L^2 (\Omega_{5 R_0}) }  + |\beta_2 | \| \nabla (w_1-w_2) \|_{L^2 (\Omega_{5 R_0})}  \nonumber \\
& ~~~ + \| w_1\|_{L^\infty (\Omega_{5 R_0})}  \| \nabla ( w_1-w_2 ) \|_{L^2 (\Omega)} + \|\nabla w_2 \|_{L^2 (\Omega)} \| w_1-w_2 \|_{L^\infty (\Omega)}\big ) \nonumber \\
& \leq  C \big ( |\alpha | \| \nabla (w_1-w_2) \|_{L^2 (\Omega)} + 
 \delta_1  |\beta_1-\beta_2 | + \delta_1 \| \nabla (w_1-w_2) \|_{L^2 (\Omega)} \nonumber \\
&~~~ + \delta_1   \| \nabla (w_1-w_2 ) \|_{L^2 (\Omega)} +  \delta_1 \| w_1-w_2 \|_{L^\infty_1 (\Omega)}\big ) \nonumber \\
& \leq C ( |\alpha| +  \delta_1) \| \omega_1-\omega_2 \|_{X_0}\,.\label{proof.thm.nonlinear.20}
\end{align}
By applying Theorem \ref{thm.linear.exterior}, we have the representation of the velocity $h$ as 
\begin{equation}
h \, = \, \bigg ( \int_{\partial \Omega} y^\bot \cdot T(h, q) \nu \, \dd \sigma_y \bigg )V 
+ \mathcal{R}_\Omega [{\rm div}\, G'_\alpha (\vec{\beta}, {\bf w})]\,.\label{def.h'}
\end{equation}
Here we have used $b_\Omega [{\rm div}\, G'_\alpha (\vec{\beta}, {\bf w})]=0$ again, which follows from the symmetry of  $G'_\alpha (\vec{\beta},{\bf w})$ and from the fact that the trace of $G'_\alpha (\vec{\beta},{\bf w})$ on $\partial\Omega$ is zero. Since $h=u_{\omega_1}-u_{\omega_2}$ and $q=q_{\omega_1}-q_{\omega_2}$, we see from the definitions of $T(h,q)$ and $\psi [\omega_j]$ in \eqref{proof.thm.nonlinear.12'},
\begin{align*}
\int_{\partial \Omega} y^\bot \cdot T(h, q) \nu \, \dd \sigma_y & \, = \, \psi [\omega_1] - \psi [\omega_2]\,,
\end{align*}
and thus, we also have from \eqref{def.h} and \eqref{def.h'},
\begin{align*}
\mathcal{R}_\Omega [{\rm div}\, G'_\alpha (\vec{\beta}, {\bf w})] \, = \, R [\omega_1] - R[\omega_2]\,.
\end{align*}
In virtue of \eqref{est.thm.linear.exterior.0} - \eqref{est.thm.linear.exterior.2} we see 
\begin{align}
\big | \int_{\partial \Omega} y^\bot \cdot T(h, q) \nu \, \dd \sigma_y \big | & \leq C \big (\| \nabla h \|_{W^{1,2}(\Omega_{4 R_0})} + \| q \|_{W^{1,2} (\Omega_{4R_0})} \big ) \nonumber \\
& \leq  C \big ( \|G'_\alpha (\vec{\beta}, {\bf w}) \|_{L^2 (\Omega)} + \| {\rm div}\, G'_\alpha (\vec{\beta},{\bf w})\|_{L^2 (\Omega_{5 R_0})}  \big )\,.\label{proof.thm.nonlinear.23}
\end{align}
A similar argument as in the derivation of  \eqref{proof.thm.nonlinear.14} yields
\begin{align}
\begin{split}
& \| \mathcal{R}_\Omega [{\rm div}\, G'_\alpha (\vec{\beta}, {\bf w})] \|_{L^\infty (\Omega_{4 R_0})} + \| \nabla \mathcal{R}_\Omega [{\rm div}\, G'_\alpha (\vec{\beta}, {\bf w})]  \|_{L^2 (\Omega)} \\
&  \leq  C \big (\| G'_{\alpha} (\vec{\beta}, {\bf w} ) \|_{L^2 (\Omega)}  +  \| {\rm div}\,  G'_{\alpha} (\vec{\beta}, {\bf w} ) \|_{L^2 (\Omega_{5 R_0})} \big )\,.\label{proof.thm.nonlinear.21}
\end{split}
\end{align}
Moreover, by applying \eqref{est.thm.linear.exterior.4'} we see that the term $\mathcal{R}_\Omega [{\rm div}\, G'_\alpha (\vec{\beta}, {\bf w})] $ satisfies 
\begin{equation}\label{proof.thm.nonlinear.22}
\begin{split}
& 
\| \mathcal{R}_\Omega [{\rm div}\, G'_\alpha (\vec{\beta}, {\bf w})]  \|_{L^{\infty}_{1} (B^{{\rm c}}_{4R_{0}})} \\
& \le 
C \bigg ( |\alpha|^{-\frac{1}{2}} \| G'_\alpha (\vec{\beta}, {\bf w} ) \|_{L^2 (\Omega)}  + \big | \log |\alpha | \big |   \| G'_\alpha (\vec{\beta}, {\bf w} ) \|_{L^\infty_{2} (\Omega)} \bigg ) \,. 
\end{split}
\end{equation}
Here we have used again the symmetry of $G'_\alpha (\vec{\beta}, {\bf w})$. 
Combining  \eqref{proof.thm.nonlinear.23}, \eqref{proof.thm.nonlinear.21}, and \eqref{proof.thm.nonlinear.22} with  \eqref{proof.thm.nonlinear.18}, \eqref{proof.thm.nonlinear.19}, and \eqref{proof.thm.nonlinear.20}, we obtain for sufficiently small  $|\alpha|\ne 0$ and $\kappa_\alpha [F]$ in \eqref{proof.thm.nonlinear.17}, 
\begin{align}
& \| \Phi [\omega_1] - \Phi [\omega_2] \|_{X_0}  \nonumber \\
& \, =\,  | \psi [\omega_1] - \psi [\omega_2] | + \| \nabla \big (R[\omega_1] - R [\omega_2] \big ) \|_{L^2 (\Omega)} + \| R[\omega_1] - R[\omega_2] \|_{L^\infty_{1} (\Omega)} \nonumber \\
& \leq C \bigg ( |\alpha|^{-\frac{1}{2}} \big ( |\alpha| + \delta_1|\log \delta_1|   \big ) + \big | \log |\alpha| \big | \, \big ( |\alpha| + \delta_1 + \delta_2 ) \bigg ) \| \omega_1-\omega_2 \|_{X_0}\nonumber 
\\
& \leq \frac34 \| \omega_1 - \omega_2 \|_{X_0}\,,\label{proof.thm.nonlinear.23'}
\end{align}
that is, the map $\Phi$ is a contraction on $ \mathcal{B}_{(\delta_1,\delta_2,\delta_2),0} $.
Here we have used the estimates $|\log \delta_1| \leq \big |\log |\alpha| \big | $ 
and $ \delta_1 \leq  2^{-1} |\alpha|^{\frac{1}{2}} \big |\log |\alpha| \big |^{-1} $ 
if $\delta_1 \geq |\alpha|$ and the data related to $F$ appearing \eqref{def.delta_1} are small enough.
Therefore, there exists a fixed point $\omega=(\beta, w)$ of $\Phi$ in $\mathcal{B}_{\vec{\delta},\gamma}$, which is unique in $\mathcal{B}_{(\delta_1,\delta_2,\delta_2),0}$. 
By the definition of $\Phi$ in \eqref{def.Phi}, we see that the fixed point $\omega=(\beta, w)$ satisfies
\begin{align*}
u_\omega \, =\,  u_{(\beta,w)} \, =\,  \psi [\omega] V + R[\omega] \, = \, \beta V + w\,, 
\end{align*}
which is the solution to \eqref{NSad}, as desired. 
Let us set $v = \beta V + w$ for the fixed point $(\beta, w) \in \mathcal{B}_{\vec{\delta},\gamma}$.
The local regularity of $v\in W^{2,2}_{loc} (\overline{\Omega})^2$ as well as $\nabla q \in L^2_{loc}(\overline{\Omega})^2$ follows from the standard elliptic regularity of the Stokes operator by regarding the nonlinear term, which belongs to $L^2 (\Omega)^2$ by the above construction,  as a given external force. This leads to the regularity $u\in W^{2,2}_{loc}(\overline{\Omega})^2$ and $\nabla p\in L^2_{loc} (\overline{\Omega})^2$ for the solution $(u,\nabla p)$ to \eqref{NSa} by \eqref{proof.thm.nonlinear.0}.  
Next we observe that $v=\beta V + w$ solves 
\begin{equation}\tag{NS$_{\alpha}''$}\label{NSadd}
 \left\{ \begin{aligned}
  -\Delta v - \alpha ( x^\bot \cdot \nabla w - w^\bot ) + \nabla \tilde q   & \,=\,  - {\rm div}\, (\alpha U \otimes v + v \otimes \alpha U + v \otimes v) \\
&  \quad\quad + {\rm div}\, H_{\alpha} (F)\,, ~~~~~~ x\in \Omega\,,\\
 {\rm div}\, v & \,=\, 0\,, ~~~~~~ x \in \Omega\,, \\
  v  & \,=\, 0 \,,  ~~~~~~ x \in \partial \Omega\,, \\ 
  v  & \, \rightarrow \,     0   \,,  ~~~~~~ |x| \rightarrow \infty\,.
\end{aligned}\right.
\end{equation}
Here we have used the identity $x^\bot \cdot \nabla V - V^\bot =0$ by the definition of $V$.
Let us take the approximation of $v$ of the form
\begin{align}
v^{(N)} = \chi_N \beta V + w^{(N)}\,,~~~~~w^{(N)} = \chi_N w - \mathbb{B}_N [\nabla \chi_N \cdot w]\,,~~~~~ N\gg 1\,,
\end{align}
where $\chi_N(|x|)$ is the radial cut-off function satisfying $\chi_N=1$ for $|x|\leq N$, $\chi_N=0$ for $|x|\geq 2 N$, and $|\nabla \chi_N|\leq C N^{-1}$, while $\mathbb{B}_N$ is the Bogovskii operator in the  closed annulus 
$A_N = \{ N \leq |x|\leq 2 N\}$ which satisfies 
\begin{align*}
{\rm supp}\, \mathbb{B}_N [\nabla \chi_N \cdot w] \subset A_N\,,  ~~~~~~ {\rm div}\,  \mathbb{B}_N [\nabla \chi_N \cdot w] = \nabla \chi_N \cdot w
\end{align*}
and 
\begin{align}
N^{-1} \| \mathbb{B}_N [\nabla \chi_N \cdot w] \|_{L^2 (\Omega)} + \| \nabla \mathbb{B}_N [\nabla \chi_N \cdot w] \|_{L^2 (\Omega)} &\leq C\| \nabla \mathbb{B}_N [\nabla \chi_N \cdot w] \|_{L^2 (\Omega)}  \nonumber \\
& \leq C \| \nabla \chi_N \cdot w \|_{L^2 (\Omega)}\,.\label{est.Bogovskii}
\end{align}
Here $C$ is independent of $N$; see, e.g. Borchers and Sohr \cite[Theorem 2.10]{BS}. Then, by multiplying $v^{(N)}$ both sides of the first equation in \eqref{NSadd} and integrating over $\Omega$, we obtain
\begin{align}
\begin{split}
& \langle \nabla v, \nabla v^{(N)}\rangle_{L^2 (\Omega)} + \alpha \langle w, x^\bot \cdot \nabla w^{(N)} - (w^{(N)})^\bot \rangle _{L^2 (\Omega)} \\
& \, = \, 
\langle v \otimes  v + \alpha U \otimes \bar{v} + v \otimes \alpha U , \nabla v^{(N)}\rangle _{L^2 (\Omega)}
- \langle H_\alpha (F), \nabla v^{(N)}\rangle_{L^2 (\Omega)}
\end{split}
\end{align} 
from the integration by parts. Here we have used again the identity for the radial circular flow: $x^\bot \cdot \nabla (\chi_N V) - \chi_N V^\bot =0$. It is easy to see from \eqref{est.Bogovskii} and 
$ w \in \dot{W}^{1,2}_{0,\sigma} (\Omega) \cap L^\infty_{1+\gamma}(\Omega)^2$ that 
\begin{align*}
\langle \nabla v, \nabla v^{(N)}\rangle_{L^2 (\Omega)}  & \,  \rightarrow \, \langle \nabla v, \nabla v \rangle_{L^2 (\Omega)}\,,\\
\langle v \otimes v , \nabla v^{(N)}\rangle _{L^2 (\Omega)} &\,  \rightarrow \, \langle v \otimes v, \nabla v \rangle _{L^2 (\Omega)} \, = \, 0\,,\\
\langle \alpha U \otimes v + v \otimes \alpha U , \nabla v^{(N)}\rangle _{L^2 (\Omega)} &\,  
\rightarrow \, \langle \alpha U \otimes  v + v \otimes \alpha U , \nabla v \rangle _{L^2 (\Omega)} 
\, = \, \alpha \langle 
U \otimes v, 
\nabla v \rangle _{L^2 (\Omega)}\,,\\
\langle H_\alpha (F), \nabla v^{(N)}\rangle _{L^2 (\Omega)} & \, \rightarrow \, \langle H_\alpha (F), \nabla v \rangle_{L^2 (\Omega)}\,,
\end{align*}
as $N\rightarrow \infty$. As for the term $\langle w,  (w^{(N)})^\bot \rangle _{L^2 (\Omega)}$ we see 
\begin{align*} 
| \langle w,  (w^{(N)})^\bot \rangle _{L^2 (\Omega)} |  \, 
& = \, | \langle w, \mathbb{B}_N [\nabla \chi_N \cdot w]^\bot \rangle _{L^2 (\Omega)}| \\
& \leq \| w\|_{L^2 (\{ N\leq |x|\leq 2 N\})} \| \mathbb{B}_N [\nabla \chi_N \cdot w] \|_{L^2 (\Omega)} \\
& \leq C N \| w\|_{L^2 (\{ N\leq |x|\leq 2 N\})} \| \nabla \chi_N \cdot w \|_{L^2 (\Omega)}\\
& \leq C N^{-2\gamma} \| w\|_{L^\infty_{1+\gamma} (\Omega)}^2\\
&\begin{cases}
& \rightarrow 0~~~~~(N\rightarrow \infty) \quad\quad {\rm if} \quad \gamma>0\,,\\
& \leq C \| w \|_{L^\infty_1 (\Omega)}^2 \quad\quad \quad ~ {\rm if} \quad \gamma=0\,.
\end{cases}
\end{align*}
It remains to consider the term $\langle w, x^\bot \cdot \nabla w^{(N)} \rangle _{L^2 (\Omega)}$. From the integration by parts and from $x^\bot \cdot \nabla \chi_N=0$,  $ {\rm div} (x^\bot \chi_N) = 0$, and ${\rm supp}\,  \mathbb{B}_N [\nabla \chi_N \cdot w] \subset A_N$ we have 
\begin{align*}
| \langle w, x^\bot \cdot \nabla w^{(N)} \rangle _{L^2 (\Omega)} | & \, =\, |  \langle w, x^\bot \cdot \nabla \mathbb{B}_N [\nabla \chi_N \cdot w] \rangle _{L^2 (\Omega)} | \\
& \leq N \| w \|_{L^2 ( \{N\leq |x|\leq 2N\})} \| \nabla \mathbb{B}_N [\nabla \chi_N\cdot w] \|_{L^2 (\Omega)} \\
& \leq C N^{-2\gamma} \| w\|_{L^\infty_{1+\gamma} (\Omega)}^2\\
&\begin{cases}
& \rightarrow 0~~~~~(N\rightarrow \infty) \quad\quad {\rm if} \quad \gamma>0\,,\\
& \leq C \| w \|_{L^\infty_1 (\Omega)}^2 \quad\quad \quad ~ {\rm if} \quad \gamma=0\,.
\end{cases}
\end{align*}
Here we have also used \eqref{est.Bogovskii}. Collecting these above, we have arrived at the identity 
\begin{align}\label{energy.identity}
\langle \nabla v, \nabla v \rangle_{L^2 (\Omega)} \, = \,  
\alpha \langle 
U \otimes v
, \nabla v \rangle _{L^2 (\Omega)}
- \langle H_\alpha (F), \nabla v \rangle_{L^2 (\Omega)} ~~~~~~~ {\rm when} \quad \gamma>0\,.
\end{align}
In particular, from the Poincar${\rm \acute{e}}$ inequality $|\langle U \otimes v, \nabla v \rangle _{L^2 (\Omega)}|\leq C \| \nabla v \|_{L^2 (\Omega)}^2$ we obtain the estimate 
\begin{align}\label{proof.thm.nonlinear.24'}
(1-C |\alpha|) \| \nabla v \|_{L^2 (\Omega)}^2 \leq \| F + \alpha \nabla U \|_{L^2 (\Omega)}^2 \qquad ~~ {\rm when} \quad \gamma>0\,,
\end{align}
which shows \eqref{est.thm.nonlinear.1} for the case $\gamma>0$ by the relation $u=\alpha U + v$. 
Note that the constant $C$ in \eqref{proof.thm.nonlinear.24'} depends only on $R_0$ and is independent of $\alpha$ and $\gamma$. To obtain the energy inequality for the case $\gamma=0$ we first consider the approximation of  $F$ and $f$ such that  
\begin{align}
F_{n} (x) \, = \, e^{-\frac{1}{n} |x|^2} F(x)\,,  
\qquad  f_n \,=\, {\rm div} \, F_n \,.\label{proof.thm.nonlinear.25}
\end{align}
Then $F_n \in L^\infty_{2+\gamma} (\Omega)^{2\times 2}$ for $\gamma>0$ and 
\begin{align}\label{proof.thm.nonlinear.24}
\begin{split}
&\lim_{n\rightarrow \infty} b_\Omega [f_n-f]  \,=\,
\lim_{n\rightarrow \infty} \|  F- F_n \|_{L^2 (\Omega)}  \,=\,
\lim_{n \rightarrow \infty} \| f_n -f \|_{L^2 (\Omega_{6 R_0})}  \,=\, 0 \,,\\
& \lim_{n \rightarrow \infty} \|(F - F_n )_{12} - (F - F_n)_{21} \|_{L^1 (\Omega)} \,=\, 0 \,, \qquad
\| F_n \|_{L^\infty_2 (\Omega)}  \leq \| F\|_{L^\infty_2 (\Omega)}\,.
\end{split}
\end{align}
Here we have used the condition $F_{12}-F_{21}\in L^1 (\Omega)$ for the convergence of $b_\Omega [f_n]$. Assume that 
\begin{align*}
|\alpha|^\frac12 \big |\log |\alpha| \big | + \kappa_\alpha [F] <\epsilon (\Omega)\,,
\end{align*}
and we fix $\alpha$. Then there is a unique fixed point $(\beta,w)$ of $\Phi$ in $\mathcal{B}_{(\delta_1,\delta_2,\delta_2),0}$. On the other hand, since $\alpha$ is fixed, there is $\gamma_0>0$ such that 
\begin{align*}
\sup_{0\leq \gamma\leq \gamma_0} \big (|\alpha|^\frac{1-\gamma}{2} \big |\log |\alpha| \big | + |\alpha|^{-\frac{\gamma}{2}} \kappa_\alpha [F]  \big ) <\epsilon_{\gamma_0} (\Omega)\,.
\end{align*}
Here we have used the fact that $\epsilon_0 (\Omega)=\epsilon (\Omega)$ and $\epsilon_\gamma (\Omega)$ is continuous on $\gamma\in [0,1)$. Hence, in view of \eqref{proof.thm.nonlinear.24} and \eqref{proof.thm.nonlinear.17}, there is $N\gg 1$ such that 
\begin{align*} 
\sup_{n\geq N} \sup_{0\leq \gamma\leq \gamma_0} \big (|\alpha|^\frac{1-\gamma}{2}  \big |\log |\alpha| \big |  +  |\alpha|^{-\frac{\gamma}{2}} \kappa_\alpha [F_n]  \big ) <\epsilon_{\gamma_0} (\Omega)\,.
\end{align*}
Let $(v_n, \nabla \tilde q_n)$ with $v_n =  \beta_n V  + w_n$, $n\geq N$, be the unique solution to \eqref{NSadd} with $F$ replaced by $F_n$ such that $(\beta_n, w_n) \in \mathcal{B}_{(\delta_1,\delta_2, \delta_{3}^{(n)}),\gamma}\subset \mathcal{B}_{(\delta_1,\delta_2,\delta_2),0}$ with some $\gamma\in (0,\gamma_0]$. Note that for sufficiently large $n$, we can take the same $\delta_1$ and $\delta_2$. Then \eqref{energy.identity} implies 
\begin{align}\label{energy.identity'}
\| \nabla v_n \|_{L^2 (\Omega)}^2 \, = \,  
\alpha \langle 
U \otimes v_n
, \nabla v_n \rangle _{L^2 (\Omega)} 
- \langle H_\alpha (F), \nabla v_n \rangle_{L^2 (\Omega)}\,.
\end{align}
Since $(\beta_n,w_n)\in \mathcal{B}_{(\delta_1,\delta_2,\delta_2),0}$ we have uniform estimates of $(v_n, \nabla \tilde q_n)$, and thus, we find a subsequence, denoted again by  $(v_n, \nabla \tilde q_n)$, such that $\beta_n\rightarrow \beta_\infty$,
\begin{align*}
& w_n \rightharpoonup w_\infty \quad {\rm in} 
\quad  W^{2,2}_{loc} (\overline{\Omega})^2\,,~~~~~~~~~\tilde q_n \rightharpoonup \tilde q_\infty \quad {\rm in} \quad W_{loc}^{1,2} (\overline{\Omega})\,,\\
& \nabla w_n \rightharpoonup \nabla w_\infty \quad {\rm in}\quad  L^2 (\Omega)^{2\times 2}\,,
~~~~~~~ 
w_n \rightharpoonup^* w_\infty \quad {\rm in}\quad L^\infty_1 (\Omega)^2\,,
\end{align*}
and $w_n \rightarrow w_\infty$ strongly in $W^{1,2}_{loc} (\overline{\Omega})^2$. Moreover, we observe from \eqref{energy.identity}
that $v_\infty =\beta_\infty V + w_\infty$ satisfies the energy inequality
\begin{align}\label{energy.identity''}
\| \nabla v_\infty \|_{L^2 (\Omega)}^2 
\leq  
\alpha \ \langle 
U \otimes v_\infty
, \nabla v_\infty \rangle _{L^2 (\Omega)} 
- \langle H_\alpha (F), \nabla v_\infty \rangle_{L^2 (\Omega)}\,.
\end{align}
It is also easy to see that $(v_\infty, \nabla \tilde q_\infty)$ is a solution to \eqref{NSadd} and $(\beta_\infty, w_\infty)\in \mathcal{B}_{(\delta_1,\delta_2,\delta_2),0}$. By the uniqueness of the fixed point of $\Phi$ in $\mathcal{B}_{(\delta_1,\delta_2,\delta_2),0}$ , we have $(\beta_\infty, w_\infty)=(\beta,w)$.
Therefore, \eqref{energy.identity''} holds with $v_\infty$ replaced by $v=\beta V + w$, as desired.
Thus we have \eqref{est.thm.nonlinear.1} also when $F\in L^\infty_2 (\Omega)^{2\times 2}$ and $F_{12}-F_{21}\in L^1 (\Omega)$.

The estimates \eqref{est.thm.nonlinear.3} and \eqref{est.thm.nonlinear.4}  follow from the fact  $\| w\|_{L^\infty_{1} (\Omega)} \leq \delta_2$ and $\| w\|_{L^\infty_{1+\gamma} (\Omega)}\leq \delta_3$ together with the definitions of $\delta_j$ in \eqref{def.delta_2}, \eqref{def.delta_3}, and $d_\gamma [F] \leq C\gamma^{-1}\| F\|_{L^\infty_{2+\gamma} (\Omega)}$ when $\gamma>0$. As for the identity \eqref{est.thm.nonlinear.2} on the coefficient $\beta$, we observe from \eqref{proof.thm.nonlinear.12'},
\begin{align*}
\beta \, = \, \int_{\partial\Omega} y^\bot \cdot \big (T(v, q) \nu \big ) \, \dd \sigma_y + b_\Omega [f]\,.
\end{align*} 
Since $v=u-\alpha x^\bot$ and $q=p + P$ near $\partial\Omega$, where $P=P(|x|)$ is a radial function and has been taken so that $\nabla P ={\rm div}\, [(\alpha U + \beta V) \otimes (\alpha U + \beta V)]$, the straightforward calculations yield 
\begin{align*}
\int_{\partial \Omega} y^\bot \cdot \big (T(v, q) \nu \big ) \, \dd \sigma_{y} \, =\, \int_{\partial \Omega} y^\bot \cdot  \big ( T(u, p) \nu \big ) \, \dd \sigma_{y} \,.
\end{align*}
Thus \eqref{est.thm.nonlinear.2} holds. The proof of Theorem \ref{thm.nonlinear} is complete.
\end{proofx}

Finally we consider the case $F\in L^\infty_{2,0}(\Omega)^{2\times 2}$. Combining Theorem \ref{thm.nonlinear} 
with Theorem \ref{thm.nonlinear'} below, we obtain Theorem \ref{thm.main}.

\begin{theorem}\label{thm.nonlinear'} 
Assume that $f={\rm div}\, F$ satisfies the conditions in Theorem \ref{thm.nonlinear} for $\gamma=0$. 
Assume in addition that $F\in L^\infty_{2,0}(\Omega)^{2\times 2}$.
Then the remainder $w$ in Theorem \ref{thm.nonlinear} belongs to $L^\infty_{1,0} (\Omega)^2$.
\end{theorem}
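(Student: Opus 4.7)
The plan is to revisit the Banach iteration underlying Theorem \ref{thm.nonlinear} and run it inside the closed subspace of $X_0$ in which the second component belongs to $L^\infty_{1,0}(\Omega)^2$. Since $L^\infty_{1,0}(\Omega)^2$ is closed in $L^\infty_1(\Omega)^2$ (for $w_n\to w$ in the $L^\infty_1$-norm with each $w_n\in L^\infty_{1,0}$, an $\epsilon/2$-argument gives $w\in L^\infty_{1,0}$), it suffices to show that the iterates $\omega^{(n+1)}=\Phi(\omega^{(n)})$ starting from $\omega^{(0)}=(0,0)$ preserve the property $w^{(n)}\in L^\infty_{1,0}$; the $X_0$-convergence $w^{(n)}\to w$ proved in Theorem \ref{thm.nonlinear} then forces $w\in L^\infty_{1,0}$.

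The first (and main) ingredient is an upgrade of the linear estimate: whenever $F\in L^\infty_{2,0}(\Omega)^{2\times 2}$ and $F_{12}-F_{21}\in L^1(\Omega)$, the remainder $\mathcal{R}_\Omega[\mathrm{div}\,F]$ of Theorem \ref{thm.linear.exterior} belongs to $L^\infty_{1,0}(\Omega)^2$. I would prove this by going back to the pointwise representation \eqref{proof.thm.linear.whole.3} (combined with the reduction in Lemma \ref{lem.linear.exterior}, whose localized source $g$ is compactly supported and thus produces a contribution in $L^\infty_{1+\gamma}\subset L^\infty_{1,0}$ by Theorem \ref{thm.linear.whole}(i)), and estimating $|x|\,|\mathcal{R}[\mathrm{div}\,F](x)|$ for each large $|x|$, not merely its supremum. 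Writing $|F(y)|\leq |y|^{-2}\eta(|y|)$ with $\eta(r):=\sup_{|z|\geq r}|z|^2|F(z)|\to 0$, the three contributions from \eqref{proof.thm.linear.whole.3} satisfy:
\begin{align*}
|x|\bigg|\int_{|y|<\frac{|x|}{2}}\nabla_y(\Gamma_\alpha-L)F\,\dd y\bigg|
&\leq C\big(|x|^{-1}\|yF\|_{L^1(B_{|x|/2})}+\min\{|\alpha|^{-1}|x|^{-2},1\}\|F\|_{L^1(B_{|x|/2})}\big), \\
|x|\bigg|\int_{|y|\geq\frac{|x|}{2}}\nabla_y\Gamma_\alpha\, F\,\dd y\bigg|
&\leq C\|F\|_{L^\infty_2(B^c_{|x|/2})}, \\
|x|\bigg|\lim_{\epsilon\to 0}\int_{|y|\geq\frac{|x|}{2}}e^{-\epsilon|y|^2}(F_{12}-F_{21})\,\dd y\bigg|\cdot\frac{1}{|x|}
&=\bigg|\int_{|y|\geq\frac{|x|}{2}}(F_{12}-F_{21})\,\dd y\bigg|.
\end{align*}
The first right-hand side tends to $0$ by splitting the integration into $|y|\leq R_\epsilon$ and $R_\epsilon<|y|\leq|x|/2$ using $\eta(r)<\epsilon$ for $r>R_\epsilon$; the second tends to $0$ by the definition of $L^\infty_{2,0}$; the third by absolute continuity of the Lebesgue integral.

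The inductive step is then immediate: assume $w^{(n)}\in L^\infty_{1,0}(\Omega)^2$. The source $\tilde F^{(n)}:=G_\alpha(\beta^{(n)},w^{(n)})+H_\alpha(F)$ of the linear system defining $\omega^{(n+1)}$ has the required structure. Indeed, $H_\alpha(F)=\alpha\nabla U+F$ with $\alpha\nabla U$ compactly supported and $F\in L^\infty_{2,0}$ with $F_{12}-F_{21}\in L^1$; the symmetric tensor $G_\alpha(\beta^{(n)},w^{(n)})$ has zero antisymmetric part, contains compactly supported contributions involving $U$, and otherwise the terms $\beta^{(n)}(V\otimes w^{(n)}+w^{(n)}\otimes V)$ and $w^{(n)}\otimes w^{(n)}$, both of which lie in $L^\infty_{2,0}$ once $w^{(n)}\in L^\infty_{1,0}$: for the first, $|x|^2|V(x)\otimes w^{(n)}(x)|\leq C|x|\,|w^{(n)}(x)|=o(1)$, and for the second, $|x|^2|w^{(n)}(x)|^2=(|x|\,|w^{(n)}(x)|)^2=o(1)$. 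Hence $\tilde F^{(n)}\in L^\infty_{2,0}(\Omega)^{2\times 2}$ with $(\tilde F^{(n)})_{12}-(\tilde F^{(n)})_{21}\in L^1(\Omega)$, and the enhanced linear estimate from the previous paragraph yields $w^{(n+1)}=\mathcal{R}_\Omega[\mathrm{div}\,\tilde F^{(n)}]\in L^\infty_{1,0}$. Since $w^{(0)}=0\in L^\infty_{1,0}$, induction and closedness conclude the argument.

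The principal obstacle is the first step: although the pointwise kernel bounds from Lemma \ref{lem.thm.linear.whole.1} and the decomposition \eqref{proof.thm.linear.whole.3} are available from Section \ref{subsec.linear.whole}, the statements of Theorems \ref{thm.linear.whole} and \ref{thm.linear.exterior} are phrased via suprema that only capture the $O(|x|^{-1})$ behavior; the present refinement requires localizing the estimates to each sufficiently large $|x|$ and exploiting the decay encoded in the $L^\infty_{2,0}$ condition together with the $L^1$ smallness of the antisymmetric part on $\{|y|\geq|x|/2\}$.
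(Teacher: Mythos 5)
Your proposal is correct and takes a genuinely different route from the paper. The paper's proof approximates $F$ by $F_n(x) = e^{-|x|^2/n} F(x)$, which satisfies $F_n \in L^\infty_{2+\gamma}$ for any $\gamma>0$; it then applies the already-established Theorem \ref{thm.nonlinear} with $\gamma>0$ to the approximating data (so each $w_n \in L^\infty_{1+\gamma} \subset L^\infty_{1,0}$), uses the decay hypothesis $F \in L^\infty_{2,0}$ to verify $\|F_n - F\|_{L^\infty_2} \to 0$, and then passes to the limit via a continuous-dependence estimate of the same form as \eqref{proof.thm.nonlinear.23'}, combined with the closedness of $L^\infty_{1,0}$ in $L^\infty_1$. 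Your approach is complementary: instead of approximating the data, you prove a new qualitative refinement of the linear estimate \eqref{est.thm.linear.exterior.4} showing that $\mathcal{R}_\Omega[{\rm div}\,F] \in L^\infty_{1,0}$ whenever $F \in L^\infty_{2,0}$ and $F_{12}-F_{21}\in L^1$, then propagate the $L^\infty_{1,0}$-property along the Banach iterates by induction. Your term-by-term pointwise analysis of \eqref{proof.thm.linear.whole.3} is sound (the compactly supported contributions from the Bogovskii correction and the cut-off commutator are handled by applying Theorem \ref{thm.linear.whole}(i) with any $\gamma'\in(0,1)$, which already yields $O(|x|^{-1-\gamma'})$ decay and hence $L^\infty_{1,0}$-membership), and the inductive step correctly verifies that $G_\alpha(\beta^{(n)},w^{(n)}) + H_\alpha(F)$ inherits the required structure from $w^{(n)}\in L^\infty_{1,0}$.

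The trade-off between the two arguments: the paper's approach is softer and shorter, requiring no new kernel estimates and instead leveraging the $\gamma>0$ theory that was established anyway for the asymptotic expansion; it also seamlessly yields the energy inequality \eqref{est.thm.nonlinear.1} for $\gamma=0$ in the same breath. Your approach is more quantitative and reveals directly the mechanism by which $o(|x|^{-2})$ decay of $F$ propagates to $o(|x|^{-1})$ decay of $w$ through the linear solution operator, but it requires you to go back inside the proof of Theorem \ref{thm.linear.whole}(ii) and sharpen the estimate from a bound on the supremum to a pointwise bound at each $|x|$ that vanishes as $|x|\to\infty$ — a refinement the paper deliberately avoids stating. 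One minor imprecision: the paper's iteration starts from $\omega^{(0)}=\Phi(0)$, but your indexing from $\omega^{(0)}=(0,0)$ is harmless since the base case of your induction ($w^{(1)}=R[(0,0)]$ with source $H_\alpha(F)$, where $\alpha\nabla U$ is compactly supported) is covered by the enhanced linear estimate.
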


\begin{proof} 
The proof is very similar to the derivation of the energy inequality for the case $\gamma=0$ in the proof of Theorem \ref{thm.nonlinear}. We set $F_n$ and $f_n$ as in \eqref{proof.thm.nonlinear.25}. Then $F_n$ and $f_n$ satisfy \eqref{proof.thm.nonlinear.24}, and moreover, the additional condition $F\in L^\infty_{2,0} (\Omega)^{2\times 2}$ implies
\begin{align}\label{proof.thm.nonlinear'.1}
\| F_n - F \|_{L^\infty_2 (\Omega)} \rightarrow 0\,, \qquad n\rightarrow \infty\,.
\end{align}
The proof of \eqref{proof.thm.nonlinear'.1} is as follows: for any small number $\epsilon>0$, there exists $R>0$ such that
$\| F_n - F \|_{L^{\infty}_{2} (B^{{\rm c}}_{R})} \le 2\epsilon \,\|F\|_{L^\infty_2 (\Omega)}$ by the decay condition $F\in L^\infty_{2,0} (\Omega)^{2\times 2}$. Then we have
\begin{equation*}
\begin{split}
\limsup_{n \rightarrow \infty} \| F_n - F \|_{L^\infty_2 (\Omega)}
& \le \limsup_{n \rightarrow \infty} 
\big( \| F_n - F \|_{L^{\infty}_{2} (B_{R})} + \| F_n - F \|_{L^{\infty}_{2} (B^{{\rm c}}_{R})} \big) \\
& \le \limsup_{n \rightarrow \infty} 
\big( (1- e^{ -\frac{R^2}{n}}) + 2\epsilon \big)\|F\|_{L^\infty_2 (\Omega)}
\,=\, 2\epsilon\,\|F\|_{L^\infty_2 (\Omega)}\,,
\end{split}
\end{equation*}
which implies \eqref{proof.thm.nonlinear'.1}. As in the proof of Theorem \ref{thm.nonlinear}, let $(v_n,\nabla q_n)$, $v_n = \beta_n V + w_n$, $n \gg 1$, be the solution to \eqref{NSadd} with $F$ replaced by $F_n$ such that $(\beta_n, w_n) \in \mathcal{B}_{(\delta_1,\delta_2, \delta_{3,}^{(n)}),\gamma}\subset \mathcal{B}_{(\delta_1,\delta_2,\delta_2),0}$  with some $\gamma\in (0, 1)$. 
Since $w_n\in L^\infty_{1+\gamma} (\Omega)^2$ and $\gamma>0$, 
it suffices to show that $(\beta_n,w_n)$ converges to $(\beta,w)$ in $\R\times L^\infty_1 (\Omega)^2$,
where $v=\beta V + w$ is the solution to \eqref{NSadd}. To prove this we observe that the difference $h=v-v_n$ solves 
\begin{equation*}
 \left\{\begin{aligned}
-\Delta h - \alpha ( x^\bot \cdot \nabla h - h^\bot ) + \nabla q  & \, = \, 
{\rm div}\, G'_{\alpha}(\vec{\beta}, {\bf w}) + {\rm div}\, (F-F_n)\,, ~~~~~  x \in \Omega\,, \\
{\rm div}\, h & \,=\, 0\,, ~~~~~  x \in \Omega\,, \\
  h  & \,=\, 0 \,,  ~~~~~ x \in \partial \Omega\,. \\
  h  & \, \rightarrow \,     0   \,,  ~~~~~~ |x| \rightarrow \infty\,.
\end{aligned}\right.
\end{equation*}
Here we have set $\vec{\beta}=(\beta,\beta_n)$, ${\bf w} = (w,w_n)$, and 
\begin{align*}
G'_{\alpha} (\vec{\beta}, {\bf w} ) & \, = \,  -\alpha ( U \otimes (w - w_n) + (w - w_n) \otimes U )  - (\beta - \beta_n) ( V \otimes w + w \otimes V ) \\
 & ~~~ - \beta_n ( V \otimes (w - w_n) + (w - w_n) \otimes V )  - w \otimes (w - w_n) -  (w - w_n) \otimes w_n \,.
\end{align*}
Then the same argument as in the derivation of \eqref{proof.thm.nonlinear.23'} shows 
\begin{align*}
\| (\beta,w) - (\beta_n, w_n) \|_{X_0}  
& \leq \frac34 \| (\beta,w)-(\beta_n,w_n) \|_{X_0} \\
& \qquad + C \bigg ( 
|b_\Omega [f-f_n]| + \| F- F_n \|_{L^2 (\Omega)} + \| f-f_n \|_{L^2 (\Omega_{6 R_0})}\\
& \qquad\quad + \| (F-F_n)_{12} - (F-F_n)_{21} \|_{L^1 (\Omega)} + \| F-F_n \|_{L^\infty_2 (\Omega)}  \bigg )\,,
\end{align*} 
where $C$ is independent of $n$. Thus, $(\beta_n,w_n)$ converges to $(\beta,w)$ in $\R\times L^\infty_1 (\Omega)^2$, 
which shows $w\in L^\infty_{1,0} (\Omega)^2$. The proof is complete.
\end{proof}

\vspace{0.3cm}

\noindent 
{\Large {\bf Appendix}}

\vspace{0.3cm}

We will prove the Hardy type inequality in two-dimensional exterior domains,
which is used in the proof of Theorem \ref{thm.nonlinear}.

\vspace{0.3cm}

\noindent {\bf \large Lemma A.1}{\it ~Let $\Omega$ be an exterior domain in $\R^2$. Then it follows that 
\begin{align}
\| \frac{f}{1+|x|} \|_{L^2 (\Omega)} 
\leq C \|\nabla f \|_{L^2 (\Omega)} \, \log \bigg ( e + \frac{\| f \|_{L^\infty_{1} (\Omega)}}{\| \nabla f \|_{L^2 (\Omega)}}  \bigg )\label{est.lem.A.1.1}
\end{align}
for any $f\in \dot{W}^{1,2}_0(\Omega) \cap L^\infty_{1}(\Omega)$. Here $C$ depends only on $\Omega$. In particular, if 
\begin{align*}
e \| \nabla f \|_{L^2 (\Omega)} + \| f \|_{L^\infty_{1} (\Omega)} \leq 1\,,
\end{align*}
then 
\begin{align}
\| \frac{f}{1+|x|} \|_{L^2 (\Omega)} \leq C \|\nabla f \|_{L^2 (\Omega)} \, \big | \log \| \nabla f \|_{L^2 (\Omega)} \big |\,.\label{est.lem.A.1.2}
\end{align} 
}

\begin{proof} 
Take $x_0 \in \R^2\setminus \overline{\Omega}$ and $0 < r_0 <e^{-1} $ so that $B_{r_0} (x_0)\subset \R^2\setminus \overline{\Omega}$. By considering the zero extension of $f$ to $\R^2$, it suffices to show \eqref{est.lem.A.1.1} for $\Omega=\R^2$ and $f \in \dot{W}^{1,2} (\R^2) \cap L^\infty_{1}(\R^2)$ such that $f=0$ in $B_{r_0}(x_0)$.  Fix $R > 2 |x_0|$. By the condition $f(x_0)=0$ and  the mean value theorem in the integral form we have 
\begin{align*}
\frac{|f(x)|}{1+|x|}  
& \leq \frac{|x-x_0|}{1+|x|}\int_0^1  | (\nabla f) (\tau (x-x_0)+x_0) |  \dd \tau\\
& \leq (1+|x_0|) \int_{ \frac{r_0}{ |x-x_0| } }^1 | (\nabla f) (\tau (x-x_0)+x_0) | \dd \tau\,,  
\quad x \in \R^2 \setminus B_{r_0} (x_0)\,,
\end{align*}
which gives
\begin{align}
\| \frac{ f }{1+|x|} \|_{L^2 (\{|x-x_0|\leq R\})} 
& 
\leq (1+|x_0|) \int_{ \frac{r_0}{R} }^1  \tau^{-1} 
\| \nabla f \|_{L^2 (\R^2)} \dd \tau
\nonumber \\
& \leq (1+|x_0|) \big ( | \log R | + | \log r_0 | \big ) \| \nabla f \|_{L^2 (\R^2)} \,. \label{proof.lem.A.1.1}
\end{align}
\noindent 
On the other hand, we have 
\begin{align}
\| \frac{ f }{1+|x|}  \|_{L^2 (\{|x-x_0|\geq R\})} 
& \leq \|\frac{1}{(1+|x|)^2} \|_{L^2 (\{|x|\geq \frac{R}{2} \})} \| f \|_{L^\infty_1 (\R^2)}\nonumber \\
& \leq \frac{C}{ R} \| f \|_{L^\infty_1 (\R^2)}\,.
\label{proof.lem.A.1.2}
\end{align}
\noindent 
If $\| f\|_{L^\infty_1 (\R^2)} \leq 2|x_0| \| \nabla f \|_{L^2 (\R^2)}$ 
then we obtain \eqref{est.lem.A.1.1} from \eqref{proof.lem.A.1.1} and \eqref{proof.lem.A.1.2} with $R=2|x_0| + 1$.
If $\| f\|_{L^\infty_1 (\R^2)} \geq 2|x_0| \| \nabla f \|_{L^2 (\R^2)}$ then we take $R=e+\frac{\| f\|_{L^\infty_1 (\R^2)}}{\| \nabla f \|_{L^2 (\R^2)}}$, which yields again from \eqref{proof.lem.A.1.1} and \eqref{proof.lem.A.1.2} that
\begin{align}
\| \frac{f}{1+|x|}  \|_{L^2 (\R^2)} 
&\leq C|\log r_0| (1+|x_0|)  \| \nabla f \|_{L^2 (\R^2)}  
\log \bigg (e+ \frac{\| f\|_{L^\infty_1 (\R^2)}}{\| \nabla f \|_{L^2 (\R^2)}} \bigg ) \,.
\end{align}

\noindent 
Here we have used $ |\log r_0| \geq 1$ and $ |\log R| \geq 1$, and $C$ is a numerical constant. Thus \eqref{est.lem.A.1.1} holds. The proof is complete. 
\end{proof}

\noindent 
{\bf Acknowledgements} \, The authors would like to express sincere thanks to Professor Toshiaki Hishida for helpful comments on our work. The first author is partially supported by the Grant-in-Aid for JSPS Fellows 17J00636. The second author is partially supported by the Grant-in-Aid for Young Scientists (B) 25800079.


\begin{thebibliography}{}


\bibitem{Bo} Borchers, W.:
\newblock Zur Stabilit{\"a}t und Faktorisierungsmethode f{\"u}r die Navier-Stokes Gleichungen inkompressibler viskoser Fl{\"u}ssigkeiten. Habilitationsschrift, Universit{\"a}t Paderborn (1992).





\bibitem{BS} Borchers, W., Sohr H.:
\newblock On the equations ${\rm rot}\, v=g$ and ${\rm div}\, u=f$ with zero boundary conditions.
Hokkaido Mathematical Journal {\bf 19}, 67-87 (1990).


\bibitem{CF} Chang, I-D.,  Finn, R.:
\newblock On the solutions of a class of equations occurring in continuum mechanics, 
with application to the Stokes paradox. Arch. Rational Mech. Anal. {\bf 7}, 388-401 (1961).


\bibitem{FGK} Farwig, R., Galdi, G. P., Kyed, M.: 
\newblock Asymptotic structure of a Leray solution to the Navier-Stokes flow around a rotating body.
Pacific J. Math. {\bf 253}, 367-382 (2011).


\bibitem{FH0} Farwig, R.,  Hishida, T.: 
\newblock  Stationary Navier-Stokes flow around a rotating obstacle.
Funkcial. Ekvac. {\bf 50}, 371-403 (2007).


\bibitem{FH1}  Farwig, R., Hishida, T. 
\newblock Asymptotic profile of steady Stokes flow around a rotating obstacle.
Manuscripta Math. {\bf 136},  315-338 (2011).


\bibitem{FH2} Farwig, R., Hishida, T.:
\newblock Leading term at infinity of steady Navier-Stokes flow around a rotating obstacle.
Math. Nachr. {\bf 284}, 2065-2077 (2011).


\bibitem{FN} Farwig, R., Neustupa, J.:
\newblock On the spectrum of a Stokes-type operator arising from flow around a rotating body.
Manuscripta Math. {\bf 122}, 419-437 (2007).


\bibitem{FS1} Finn, R., Smith, D. R.:
\newblock On the linearized hydrodynamical equations in two dimensions.
Arch. Rational Mech. Anal. {\bf 25} (1967), 1-25 (1967).


\bibitem{FS2} Finn, R., Smith, D. R.:
\newblock On the stationary solution of the Navier-Stokes equations in two dimensions.
Arch. Rational. Mech. Anal. {\bf 25}, 26-39 (1967).



\bibitem{G1} Galdi, G. P.:
\newblock Steady flow of a Navier-Stokes fluid around a rotating obstacle.
J. Elast. {\bf 71}, 1-31 (2003).



\bibitem{G2} Galdi, G. P.:
\newblock Stationary Navier-Stokes problem in a two-dimensional exterior domain.
Handbook of Differential Equations, Stationary partial differential equations.  
Vol. I., M. Chipot and P. Quittner, eds., pp. 71-155. North-Holland, Amsterdam (2004).


\bibitem{G3} Galdi, G. P.:
\newblock An Introduction to the Mathematical Theory of the Navier-Stokes Equations, Steady-State Problems. Second Edition, Springer, (2011).


\bibitem{GSi} Galdi, G. P.,  Silvestre, A. L.: 
\newblock Strong solutions to the Navier-Stokes equations around a rotating obstacle. 
Arch. Ration. Mech. Anal. {\bf 176},  331-350 (2005).


\bibitem{GHH} Geissert, M., Heck, H., Hieber, M.: 
\newblock $L^p$-theory of the Navier-Stokes flowin the exterior of a moving or rotating obstacle. 
J. Reine Angew. Math. {\bf 596},  45-62 (2006).


\bibitem{HMN} Higaki, M., Maekawa, Y., Nakahara, Y.:
\newblock On the two-dimensional steady Navier-Stokes equations related to flows around a rotating obstacle.
accepted for publication in RIMS Kokyuroku Bessatsu 
``Workshop on the Boltzmann Equation, Microlocal Analysis and Related Topics''.


\bibitem{HW}  Hillairet, M., Wittwer, P.:
\newblock On the existence of solutions to the planar exterior Navier-Stokes system. 
J. Differential Equations. {\bf 255}, 2996-3019 (2013).


\bibitem{H1}  Hishida, T.:
\newblock An existence theorem for the Navier-Stokes flow in the exterior of a rotating obstacle.
Arch. Rational Mech. Anal. {\bf 150}, 307-348 (1999).


\bibitem{H2} Hishida, T.:
\newblock $L^q$ estimates of weak solutions to the stationary Stokes equations around a rotating body.
J. Math. Soc. Japan. {\bf 58}, 743-767 (2006).


\bibitem{H3} Hishida, T.:
\newblock Asymptotic structure of steady Stokes flow around a rotating obstacle in two dimensions. 
Mathematical Fluid Dynamics, Present and Future. Springer Proceedings in Mathematics \& Statistics,
Vol 183, pp. 95-137. Springer, Tokyo (2016)


\bibitem{HShi} Hishida, T., Shibata, Y.:
\newblock $L^p-L^q$  estimate of the Stokes operator and Navier-Stokes flows in the exterior of a rotating 
obstacle. Arch. Ration. Mech. Anal. {\bf 193},  339-421 (2009).


\bibitem{M} Maekawa, Y.:
\newblock On stability of steady circular flows in a two-dimensional exterior disk. 
Hokkaido University Preprint Series in Mathematics, No. 1075 (2015).
Available at http://eprints3.math.sci.hokudai.ac.jp/2376/


\bibitem{N} Nakatsuka, T.:
\newblock  On uniqueness of symmetric Navier-Stokes flows around a body in the plane. 
Adv. Differential Equations.  {\bf 20}, 193-212 (2015).


\bibitem{PR} Pileckas, K., Russo, R.:
\newblock On the existence of vanishing at infinity symmetric solutions to the plane stationary exterior Navier-Stokes problem. Math. Ann. {\bf 352}, 643-658 (2012).


\bibitem{R} Russo, A.:
\newblock A note on the exterior two-dimensional steady-state Navier-Stokes problem.
J. Math. Fluid Mech. {\bf 11}, 407-414 (2009).


\bibitem{Si} Silvestre, A. L.:
\newblock On the Existence of Steady Flows of a Navier-Stokes Liquid Around a Moving Rigid Body. 
Math. Meth. Appl. Sci. {\bf 27}, 1399-1409 (2004).


\bibitem{So} Sohr, H.:
\newblock The Navier-Stokes Equations. An elementary Functional Analytic Approach. 
Birkh\"{a}user-Verlag, Basel (2001). 


\bibitem{Y} Yamazaki, M.:
\newblock  Unique existence of stationary solutions to the two-dimensional Navier-Stokes equations on exterior domains.
Mathematical Analysis on the Navier-Stokes Equations and Related Topics, Past and Future - In memory of Professor Tetsuro Miyakawa,
Gakuto International Series in Mathematical Sciences and Applications, Vol. 35, pp. 220-241. Gakk${\rm \bar{o}}$tosho, Tokyo (2011).


\end{thebibliography}
\end{document}